%% LyX 2.3.7 created this file.  For more info, see http://www.lyx.org/.
%% Do not edit unless you really know what you are doing.
\documentclass[oneside,11pt]{amsart}
\setcounter{tocdepth}{0}
\usepackage[pdftex]{color}
\usepackage{array}
\usepackage{mathtools}
\usepackage{amstext}
\usepackage{amsthm}
\usepackage{amssymb}
\usepackage[pdftex]{graphicx}
\usepackage[unicode=true,pdfusetitle,
 bookmarks=true,bookmarksnumbered=false,bookmarksopen=false,
 breaklinks=false,pdfborder={0 0 0},pdfborderstyle={},backref=false,colorlinks=true]
 {hyperref}

\makeatletter

%%%%%%%%%%%%%%%%%%%%%%%%%%%%%% LyX specific LaTeX commands.
%% Because html converters don't know tabularnewline
\providecommand{\tabularnewline}{\\}

%%%%%%%%%%%%%%%%%%%%%%%%%%%%%% Textclass specific LaTeX commands.
\theoremstyle{plain}
\newtheorem{thm}{\protect\theoremname}
\theoremstyle{definition}
\newtheorem{example}[thm]{\protect\examplename}
\theoremstyle{plain}
\newtheorem{cor}[thm]{\protect\corollaryname}
\theoremstyle{plain}
\newtheorem{prop}[thm]{\protect\propositionname}
\theoremstyle{plain}
\newtheorem{lem}[thm]{\protect\lemmaname}

%%%%%%%%%%%%%%%%%%%%%%%%%%%%%% User specified LaTeX commands.
% 如果没有这一句命令，XeTeX会出错，原因参见
% http://bbs.ctex.org/viewthread.php?tid=60547

\makeatother

\providecommand{\corollaryname}{Corollary}
\providecommand{\examplename}{Example}
\providecommand{\lemmaname}{Lemma}
\providecommand{\propositionname}{Proposition}
\providecommand{\theoremname}{Theorem}

\begin{document}
\global\long\def\mathistOperatorAd{\operatorname{Ad}}%

\global\long\def\mathistOperatorAD{\operatorname{\boldsymbol{\mathrm{Ad}}}}%

\global\long\def\mathistOperatorarccosh{\operatorname{arccosh}}%

\global\long\def\mathistOperatorarccot{\operatorname{arccot}}%

\global\long\def\mathistOperatorarcsinh{\operatorname{arcsinh}}%

\global\long\def\mathistOperatorarctanh{\operatorname{arctanh}}%

\global\long\def\mathistOperatorArg{\operatorname{Arg}}%

\global\long\def\mathistOperatord{\mathop{}\!\mathrm{d}}%

\global\long\def\mathistOperatordiam{\operatorname{diam}}%

\global\long\def\mathistOperatoressinf{\operatorname{ess.inf}}%

\global\long\def\mathistOperatoresssup{\operatorname{ess.sup}}%

\global\long\def\mathistOperatorExt{\operatorname{Ext}}%

\global\long\def\mathistOperatorgrad{\operatorname{grad}}%

\global\long\def\mathistOperatorhol{\operatorname{hol}}%

\global\long\def\mathistOperatorHyp{\operatorname{Hyp}}%

\global\long\def\mathistOperatorIm{\operatorname{Im}}%

\global\long\def\mathistOperatorLength{\operatorname{Length}}%

\global\long\def\mathistOperatorLn{\operatorname{Ln}}%

\global\long\def\mathistOperatorRe{\operatorname{Re}}%

\global\long\def\mathistOperatorRes{\operatorname{Res}}%

\global\long\def\mathistOperatorsupp{\operatorname{supp}}%

\global\long\def\mathistOperatortr{\operatorname{tr}}%

\global\long\def\mathistOperatortw{\operatorname{tw}}%

\global\long\def\mathistOperatorVol{\operatorname{Vol}}%

\title{Bi-geodesic mappings between hyperbolic surfaces with boundary}
\author{Wen Yang}
\address{Hunan University, School of Mathematics, Lushan Road (S), Yuelu District,
Changsha, Zip Code: 410082, P.R. China and Sun Yat-sen University,
School of Mathematics, Xingang Xi Road, Guangzhou, Zip Code: 510275,
P. R. China}
\email{yang-wen@139.com}
\begin{abstract}
It is proved that a bijection between two compact hyperbolic surfaces
with boundary is an isometry if it and its inverse map each geodesic
onto some geodesic.
\end{abstract}

\subjclass[2000]{51M09, 51M10, 57M50}
\keywords{hyperbolic surface, simple geodesic, geodesic mapping}
\maketitle

\section{Introduction}

As is well known, if $f:M\rightarrow N$ is an isometry, then $f$
and $f^{-1}$ map each geodesic onto some geodesic. Conversely, suppose
that $f:M\rightarrow N$ is a \emph{bi-geodesic mapping}, i.e., $f$
is a bijection such that $f$ and $f^{-1}$ map each geodesic onto
some geodesic. Is $f$ always an isometry? The answer is no. Many
linear transformations of Euclidean spaces are counterexamples. Here
is a non-trivial counterexample:
\begin{example}
Let $\Delta=\{z\in\mathbb{C}:|z|<1\}$ be the unit disk equipped with
the Hilbert metric. As illustrated in Figure \ref{fig:counterexample},
we connect $0$ to $e^{\pi i/3}$ and to $1$ by two copies of the
minor arc that connects $1$ to $e^{\pi i/3}$. These arcs bound a
domain $D$. Consider the Euclidean rotation
\[
f:D\rightarrow D,\quad z\mapsto e^{2\pi i/3}\left(z-\frac{\sqrt{3}}{3}e^{\pi i/6}\right)+\frac{\sqrt{3}}{3}e^{\pi i/6},
\]
which rotates $D$ by an angle of $2\pi/3$.

\begin{figure}[h]
\noindent \begin{centering}
\includegraphics{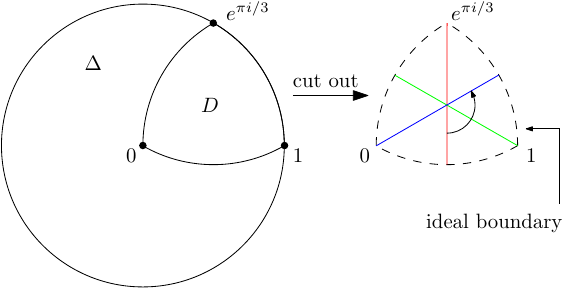}
\par\end{centering}
\caption{\label{fig:counterexample}A non-trivial bi-geodesic mapping.}
\end{figure}

The mapping $f:D\rightarrow D$ has the following properties:

(1) $f$ and $f^{-1}$ are diffeomorphisms that map each geodesic
onto some geodesic;

(2) We can naturally extend $f$ to a self-homeomorphism of $\overline{D}$;

(3) The domain $D$ is equipped with a hyperbolic metric and is neither
compact nor complete;

(4) $f$ maps many geodesics of infinite length to geodesics of finite
length;

(5) $f$ is not an isometry, since it is not even a dilation.
\end{example}

The above example shows that a general bi-geodesic mapping can be
very complicated. Nevertheless, the bi-geodesic mappings between some
spaces are trivial. In 1847 Von Staudt \cite{staudt1847geometrie}
proved the Fundamental Theorem of Affine Geometry: A bijective self-mapping
of a Euclidean space is affine if it maps lines to lines. Since then,
Von Staudt's theorem has been generalized to various settings by Beltrami,
Levi-Civita, Weyl, Sinyukov, Mike\v{s}, and others (see \cite{beltrami1868,weyl1921,lenz1958einige,chubarev1999fundamental,jeffers2000lost,li2005transformations,fu2009isometries,yao2011fundamental,mikes2015differential,li2016fundamental,artstein2017fundamental,shulkin2017fundamental,Lo2024rigidity}).

In a recent joint work \cite{liu2019geodesic} with L. Liu, we have
studied the bi-geodesic mappings between pairs of pants. The aim of
this paper is to continue the work of \cite{liu2019geodesic} and
establish:
\begin{thm}
\label{Theorem:main}Let $R_{i}$ be a compact hyperbolic surface
of genus $g_{i}$ with $n_{i}\geqslant1$ geodesic boundary components,
and $S_{i}$ the interior of $R_{i}$, $i=1,2$. Suppose that $f:S_{1}\rightarrow S_{2}$
is a bijection such that $f$ and $f^{-1}$ map each geodesic onto
some geodesic. Then $f$ is an isometry.
\end{thm}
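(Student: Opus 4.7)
The strategy is to reduce to the pair-of-pants case of \cite{liu2019geodesic} via a pants decomposition of $R_1$ that is preserved by $f$, apply the pants result on each piece, and glue. As a preliminary, bijectivity together with the geodesic hypothesis implies that $f$ sends simple geodesics to simple geodesics, closed geodesics to closed geodesics, and preserves the disjointness of two geodesics. Using arguments in the spirit of \cite{liu2019geodesic}, one first promotes $f$ to a homeomorphism $S_1 \to S_2$ and then extends it to a homeomorphism $\bar{f} : R_1 \to R_2$ that sends each boundary component of $R_1$ onto a boundary component of $R_2$: a simple geodesic of $S_1$ that spirals into a boundary geodesic of $R_1$ must be sent to one that spirals into some boundary geodesic of $R_2$, which pins down the boundary extension. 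In particular $g_1 = g_2$ and $n_1 = n_2$.

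Next, I would fix a maximal collection $\Gamma = \{\gamma_1,\dots,\gamma_N\}$ of disjoint non-peripheral simple closed geodesics of $S_1$, with $N = 3g_1 + n_1 - 3$, whose complement in $R_1$ consists of $2g_1 + n_1 - 2$ pairs of pants. Since $f$ preserves simpleness, closedness, and disjointness of geodesics and is already known to be a homeomorphism, $f(\Gamma) = \{f(\gamma_1),\dots,f(\gamma_N)\}$ is a pants decomposition of $R_2$. Each open pair of pants $P \subset S_1$ in this decomposition maps bijectively onto an open pair of pants $f(P) \subset S_2$, and $f|_P$ is itself a bi-geodesic bijection onto $f(P)$: a geodesic contained in $P$ is disjoint from $\Gamma$, hence its image is disjoint from $f(\Gamma)$ and therefore lies in $f(P)$, and the analogous statement holds for $f^{-1}$. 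By the main theorem of \cite{liu2019geodesic}, each restriction $f|_P$ is an isometry.

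It remains to glue these local isometries. Each $f|_P$ extends by continuity to the closed pair of pants, mapping the three cuffs isometrically onto their images in $f(\Gamma) \cup \partial R_2$. Two pants sharing a cuff $\gamma_j$ yield two isometric parametrizations of $f(\gamma_j)$, and the gluing is well defined precisely when these parametrizations agree, that is, when no Fenchel--Nielsen twist is introduced. To force this, I would select a bi-infinite simple geodesic $\alpha \subset S_1$ crossing $\gamma_j$ transversely exactly once so as to enter both adjacent pants; then $f(\alpha)$ is a geodesic of $S_2$ crossing $f(\gamma_j)$ transversely, and comparing the single crossing point under the two isometric extensions forces them to agree on $\gamma_j$. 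Performing this at every interior cuff and at every boundary component assembles the pant-wise isometries into a global isometry $\bar{f}:R_1 \to R_2$, restricting to the desired isometry $S_1 \to S_2$.

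I expect the gluing step to be the main obstacle. The pair-of-pants isometries coming from \cite{liu2019geodesic} are determined only up to the one-parameter family of twists along each cuff, so one needs global geometric witnesses — the transversal geodesics $\alpha$ above — to rigidify the match along every cuff simultaneously. A secondary technical point is to verify that the continuous extension of $f$ to $\partial R_i$ is compatible with the pants isometries on the cuff parameters, but this should follow by taking limits of transversal geodesics approaching the boundary.
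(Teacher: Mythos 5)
Your proposal has a genuine gap at its very first step, and that gap is in fact the entire content of the paper. You write that ``one first promotes $f$ to a homeomorphism $S_1\to S_2$'' using ``arguments in the spirit of'' the pair-of-pants paper, but no argument is given, and those arguments do not transfer directly: $f$ is only a bijection of point sets, cannot be lifted to the universal cover at once, and the geodesics of a general surface with boundary are far more varied than those of a pair of pants (there are simple geodesics spiralling towards other simple geodesics, simple geodesics that are not self-isolated, etc.). Establishing continuity occupies Sections \ref{sec:A-preliminary-classification}--\ref{sec:Isometry}: one classifies geodesics by intersection-theoretic properties that a bare bijection must preserve (strongly cofills, recurrent, strongly pre-spiraled), deduces that $G_c$, $G(p)$, $G(\infty)$, $G(p,\infty)$ are each $f$-invariant, shows $f$ induces a simplicial automorphism of the arc and curve complex, realizes it by a homeomorphism $h$ via Theorem \ref{thm:KP}, and only then proves continuity by exhibiting, for each $\tau\in G(p)$, neighbourhoods swept out by families of homotopic geodesics (Lemma \ref{lem:neighborhood}). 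Relatedly, your preliminary claim that bijectivity alone forces $f$ to send simple geodesics to simple geodesics and closed geodesics to closed geodesics is not immediate for a map of point sets; it is precisely what Table \ref{tab:classification_geodesics} is constructed to prove.

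Moreover, once continuity is granted, the rest of your argument is superfluous: Proposition \ref{prop:isometry} already states that a geodesic-preserving homeomorphism between interiors of finite-type hyperbolic surfaces is an isometry, so no pants decomposition or twist-rigidifying gluing is needed. Conversely, without continuity your reduction to pairs of pants does not go through: a geodesic of a complementary pair of pants $P$ that hits a cuff $\gamma_j$ is not a complete geodesic of $S_1$ but only one component of $\rho\cap P$ for a complete geodesic $\rho$ crossing $\gamma_j$, and a bare bijection gives no control over where $f$ sends an individual component of $\rho\cap P$ inside $f(\rho)$. Hence $f|_P$ cannot be certified as a bi-geodesic mapping of pairs of pants, and Theorem \ref{thm:LY} does not apply to it.
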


Since a geodesic $\tau$ in a hyperbolic surface $R$ with boundary
is a boundary component if and only if every geodesic $\rho\neq\tau$
in $R$ intersects $\tau$ at most twice, by Theorem \ref{Theorem:main}
we obtain:
\begin{cor}
Let $R_{i}$ be a compact hyperbolic surface of genus $g_{i}$ with
$n_{i}\geqslant1$ geodesic boundary components, $i=1,2$. Suppose
that $f:R_{1}\rightarrow R_{2}$ is a bijection such that $f$ and
$f^{-1}$ map each geodesic onto some geodesic. Then $f$ is an isometry.
\end{cor}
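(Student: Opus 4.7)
The plan is to reduce the corollary to Theorem~\ref{Theorem:main} by restricting $f$ to the interiors $S_i$, and then to extend the isometry thus obtained back across the boundary. The geodesic-theoretic characterization stated just before the corollary --- $\tau$ is a boundary component of $R$ if and only if every other geodesic meets $\tau$ in at most two points --- transfers across $f$ because $f$ is a bi-geodesic bijection: given a boundary component $\tau_1\subset R_1$ and any geodesic $\rho_2\neq f(\tau_1)$ in $R_2$, writing $\rho_2=f(\rho_1)$ one has $|\rho_2\cap f(\tau_1)|=|\rho_1\cap \tau_1|\leq 2$, so $f(\tau_1)$ is a boundary component of $R_2$. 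Applying the same reasoning to $f^{-1}$ gives $f(\partial R_1)=\partial R_2$ and hence $f(S_1)=S_2$. Since the geodesics of $S_i$ are exactly the non-boundary geodesics of $R_i$ (with any boundary endpoints removed), the restriction $f|_{S_1}:S_1\to S_2$ is a bi-geodesic bijection between the open hyperbolic surfaces, and Theorem~\ref{Theorem:main} yields that $f|_{S_1}$ is an isometry.

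Because $S_i$ is open and dense in the compact (hence complete) space $R_i$, the isometry $f|_{S_1}$ extends uniquely to a continuous isometry $\tilde f:R_1\to R_2$. It remains to show that $\tilde f$ coincides with $f$ on $\partial R_1$. Fix $x\in\partial R_1$ and choose a geodesic $\rho$ in $R_1$ whose intersection with $\partial R_1$ is the single point $x$ --- for example a short geodesic arc emanating perpendicularly from $x$ into the interior. Since $f(\rho\cap\partial R_1)=f(\rho)\cap\partial R_2$, the image $f(\rho)$ meets $\partial R_2$ only at $f(x)$. Taking interior points $x_n\in\rho\cap S_1$ with $x_n\to x$, on the one hand $f(x_n)=\tilde f(x_n)\to\tilde f(x)\in\partial R_2$, while on the other hand $f(x_n)$ lies on $f(\rho)$, so any boundary limit of $(f(x_n))$ must be $f(x)$. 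Hence $\tilde f(x)=f(x)$, and $f=\tilde f$ is an isometry.

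The step that I expect to require the most care is this final boundary-matching, which depends on the precise notion of ``geodesic'' adopted in the paper for a surface with boundary: for every boundary point $x$ one needs a geodesic through $x$ whose only boundary intersection is $x$ itself, and one needs the image of such a geodesic under $f$ to retain this property. Granting this (which follows if short geodesic sub-arcs count as geodesics, so that the bi-geodesic hypothesis applies to them), everything else reduces formally to the intersection-count characterization of boundary components together with the density of $S_i$ in $R_i$.
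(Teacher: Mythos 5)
Your reduction is exactly the paper's: the paper proves this corollary in one sentence, observing that boundary components are intrinsically characterized by the property that every other geodesic meets them at most twice, so $f$ carries $\partial R_1$ to $\partial R_2$, restricts to a bi-geodesic bijection $S_1\rightarrow S_2$, and Theorem \ref{Theorem:main} applies. Your explicit boundary-matching step (which the paper leaves implicit) is sound in substance, but two details need repair. First, in this paper a ``geodesic'' is by definition maximal, so a short perpendicular sub-arc at $x$ does not qualify, and the complete perpendicular geodesic through $x$ may return to $\partial R_1$ a second time; instead take a complete geodesic emanating from $x$ that never returns to the boundary (e.g.\ one spiraling towards an interior simple closed geodesic), so that $\rho\cap\partial R_1=\{x\}$. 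Second, the claim that any boundary limit of $(f(x_n))$ must be $f(x)$ is not automatic, since a geodesic can accumulate on boundary points it does not contain (a spiraling end); the cleanest fix is to note that $f(\rho)$ and $\tilde f(\rho)$ are maximal geodesics of $R_2$ with the same intersection with $S_2$, hence equal, so that $\{f(x)\}=f(\rho)\cap\partial R_2=\tilde f(\rho)\cap\partial R_2=\{\tilde f(x)\}$.
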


Note that $f$ is assumed to be a bijection but not a homeomorphism,
and all the geodesics are regarded as point sets but not mappings.
We cannot lift $f$ to the universal covering space at once, hence
the classical proof of the Fundamental Theorem of Affine Geometry
does not work here. In addition, the geodesics here are more complex
than that in a pair of pants or a torus. In order to overcome these
difficulties, we develop the idea in \cite{liu2019geodesic} and introduce
new properties preserved by $f$ to carefully classify the geodesics
in terms of intersections with geodesics.

\section{Preliminaries}

\subsection{Hyperbolic surfaces and geodesics}

Here we recall some definitions and results from the theory of hyperbolic
surfaces, see \cite{casson1988automorphisms}, \cite{farb2011primer}
or \cite{imayoshi1992introduction} for more details.

It is well known that a smooth curve $\gamma$ is called a geodesic
if $\ddot{\gamma}=0$. A point $p$ is called a \emph{self-intersection}
of $\gamma$ if $\gamma$ transverses to itself at $p$.

However, throughout this paper, the word ``geodesic'' always means
the image of a complete geodesic. In other words, we say a subset
of a hyperbolic surface is a \emph{geodesic} if it is the image of
a locally shortest curve which cannot be extended to any other locally
shortest curve.

We call a surface, obtained from a closed surface of genus $g$ by
deleting mutually disjoint $n$ disks and $m$ points, a \emph{surface
of finite type}.
\begin{prop}[{\cite[Proposition 16]{liu2019geodesic}}]
\label{prop:isometry}A homeomorphism between the interiors of two
hyperbolic surfaces of finite type is an isometry if it maps each
geodesic onto some geodesic.
\end{prop}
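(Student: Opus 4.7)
The plan is to lift $f$ to the universal covers and then argue projectively in the Klein model of $\mathbb{H}^{2}$. Since $f\colon S_{1}\to S_{2}$ is a homeomorphism, it admits a lift $\tilde{f}\colon\tilde{S}_{1}\to\tilde{S}_{2}$, equivariant with respect to the Fuchsian covering groups $\Gamma_{i}$, where each $\tilde{S}_{i}$ embeds as a connected open subset of $\mathbb{H}^{2}$. Identifying $\mathbb{H}^{2}$ with the open Klein disk $\mathbb{D}\subset\mathbb{R}^{2}\subset\mathbb{RP}^{2}$, in which hyperbolic geodesics appear as straight chords, the hypothesis that $f$ sends geodesics to geodesics translates into the statement that $\tilde{f}$ sends straight line segments to straight line segments. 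At this point I would invoke a topological form of the Fundamental Theorem of Projective Geometry, which ensures that $\tilde{f}$ is the restriction of a unique projective collineation $\Phi\in\operatorname{PGL}(3,\mathbb{R})$.

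It then remains to show that $\Phi$ preserves the absolute conic $\partial\mathbb{D}$, for then $\Phi$ is an isometry of $\mathbb{H}^{2}$ and consequently $f$ is an isometry. I would exploit the equivariance of $\tilde{f}$: if $\gamma\in\Gamma_{1}$ is hyperbolic, then its translation axis $\ell_{\gamma}$ is a complete chord of $\mathbb{D}$ contained in $\tilde{S}_{1}$ with both endpoints on $\partial\mathbb{D}$, and the image $\tilde{f}(\ell_{\gamma})=\Phi(\ell_{\gamma})$ is the axis of the hyperbolic element $\tilde{f}\circ\gamma\circ\tilde{f}^{-1}\in\Gamma_{2}$, hence also a complete chord of $\mathbb{D}$ with both endpoints on $\partial\mathbb{D}$. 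Since each $\Gamma_{i}$ is non-elementary (the $R_{i}$ being hyperbolic of finite type), one may choose five hyperbolic elements whose axis endpoints yield five points of $\partial\mathbb{D}$ in general position. As a non-degenerate conic is determined by any five of its points, and $\Phi$ sends these five points to points of $\partial\mathbb{D}$, we conclude $\Phi(\partial\mathbb{D})=\partial\mathbb{D}$.

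The main obstacle is this last step, in which the purely projective extension $\Phi$ of $\tilde{f}$ must be reconciled with the dynamical data of the Fuchsian groups. In particular, one has to verify that the image of a translation axis of $\Gamma_{1}$ is genuinely a complete chord of $\mathbb{D}$ rather than a proper subarc whose image exits $\tilde{S}_{2}$ before reaching $\partial\mathbb{D}$, a concern precisely because $\tilde{S}_{2}$ may be a proper subset of $\mathbb{H}^{2}$ in the incomplete case. This is controlled by the equivariance relation $\Phi\circ\gamma\circ\Phi^{-1}\in\Gamma_{2}$ together with the fact that the axis of a hyperbolic element is intrinsic to its conjugacy class and does extend to a complete chord of $\mathbb{D}$ inside the corresponding universal cover.
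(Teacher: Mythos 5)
The paper offers no proof of this proposition at all --- it is imported as a black box from \cite{liu2019geodesic} --- so your argument has to stand on its own, and in my judgement it essentially does. Lifting is legitimate here precisely because $f$ is assumed to be a homeomorphism, and every maximal chord segment of $\tilde S_{1}$ in the Klein model projects to a geodesic of $S_{1}$ in the paper's sense (an inextensible locally shortest curve), so $\tilde f$ carries maximal segments onto maximal segments; a local, open-domain form of the Fundamental Theorem of Projective Geometry then produces $\Phi\in\operatorname{PGL}(3,\mathbb{R})$. The step you flag as the main obstacle can be closed more cleanly than by appealing to axes being ``intrinsic'': since two projective maps agreeing on an open set agree everywhere, $\rho(\gamma)\coloneqq\Phi\gamma\Phi^{-1}$ lies in $\Gamma_{2}$ globally, and the projective line $\Phi(L)$ spanned by $\Phi(\ell_{\gamma})$ is a $\rho(\gamma)$-invariant line meeting the open disk. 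For an element of $\operatorname{PO}(2,1)$ the only invariant projective lines are the axis and the tangent lines at its fixed points on the conic (a single tangent line in the parabolic case), and of these only the axis meets the open disk; hence $\rho(\gamma)$ is hyperbolic, $\Phi(L)$ is its axis, and $\Phi$ carries the two fixed points of $\gamma$ on $L$ (the conic points) to the two fixed points of $\rho(\gamma)$ on $\Phi(L)$, which lie on $\partial\mathbb{D}$. Your five points are automatically in general position because a line meets a nondegenerate conic at most twice, so the conclusion $\Phi(\partial\mathbb{D})=\partial\mathbb{D}$ follows. The one hypothesis you should make explicit is that the covering groups are non-elementary: Example 1 of the paper (a non-isometric rotation of a simply connected hyperbolic domain that maps geodesics to geodesics) shows the statement genuinely fails without enough hyperbolic elements, so ``hyperbolic surface of finite type'' must be read as excluding the elementary cases, exactly where your argument uses them.
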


\begin{lem}[Collar lemma, see Lemma 13.6 in \cite{farb2011primer}]
Let $\gamma$ be a simple closed geodesic on a hyperbolic surface
$X$. Then $C_{\gamma}=\{x\in X:d(x,\gamma)\leqslant w\}$ is an embedded
annulus, where $w$ is given by
\[
w=\sinh^{-1}\left(\frac{1}{\sinh(l(\gamma)/2)}\right).
\]
We call $C_{\gamma}$ the \emph{collar} around $\gamma$.
\end{lem}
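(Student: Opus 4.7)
The plan is to pass to the universal cover $\pi\colon\mathbb{H}^{2}\to X$. Let $\Gamma$ be the deck group, a torsion-free Fuchsian group whose non-trivial elements are all hyperbolic translations (since $X$ is compact with geodesic boundary, there are no parabolics). First I would lift $\gamma$ to a $\Gamma$-invariant family $\{\tilde{\gamma}_{i}\}_{i\in I}$ of complete geodesics in $\mathbb{H}^{2}$. Because $\gamma$ is simple, any two distinct lifts are disjoint, as an intersection point would descend to a self-intersection. Fix one lift $\tilde{\gamma}_{0}$ and observe that its stabilizer in $\Gamma$ is the cyclic group $\langle T_{0}\rangle$, where $T_{0}$ is the translation along $\tilde{\gamma}_{0}$ of length $l(\gamma)$.

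Next I would reduce the conclusion to a distance estimate between lifts. The closed tube $\tilde{C}=\{y\in\mathbb{H}^{2}:d(y,\tilde{\gamma}_{0})\leq w\}$ is $\langle T_{0}\rangle$-invariant, its quotient $\tilde{C}/\langle T_{0}\rangle$ is a compact topological annulus, and $\pi$ factors through this quotient to a continuous surjection onto $C_{\gamma}$. By compactness, this factored map is a homeomorphism --- and hence $C_{\gamma}$ is an embedded annulus --- if and only if $\phi(\tilde{C})\cap\tilde{C}=\emptyset$ for every $\phi\in\Gamma\setminus\langle T_{0}\rangle$. Since each such $\phi$ sends $\tilde{\gamma}_{0}$ to a distinct lift $\tilde{\gamma}_{1}$, the lemma reduces to showing $d(\tilde{\gamma}_{0},\tilde{\gamma}_{1})\geq 2w$ for every lift $\tilde{\gamma}_{1}\neq\tilde{\gamma}_{0}$, which by the defining relation $\sinh(w)\sinh(l(\gamma)/2)=1$ is equivalent to
\[
\sinh\!\bigl(l(\gamma)/2\bigr)\,\sinh\!\bigl(d(\tilde{\gamma}_{0},\tilde{\gamma}_{1})/2\bigr)\ \geq\ 1.
\]

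To establish this inequality I would exploit the no-parabolics property of $\Gamma$. Let $T_{1}$ denote the translation of length $l(\gamma)$ along $\tilde{\gamma}_{1}$, obtained by $\Gamma$-conjugating $T_{0}$ by any element sending $\tilde{\gamma}_{0}$ to $\tilde{\gamma}_{1}$. The element $T_{0}T_{1}^{-1}\in\Gamma$ is neither the identity (which would force $T_{0}=T_{1}$ and hence $\tilde{\gamma}_{0}=\tilde{\gamma}_{1}$) nor parabolic, so it must be hyperbolic. The standard trace formula for the product of two hyperbolic isometries with disjoint axes at distance $d$ and equal translation lengths $\ell$ then translates the hyperbolicity of $T_{0}T_{1}^{-1}$ into exactly the displayed inequality, with the degenerate parabolic case corresponding to equality. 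I expect the main obstacle to be bookkeeping the hyperbolic trigonometry cleanly: the most transparent route uses the right-angled hyperbolic hexagon with alternating sides $l(\gamma)/2,\ d(\tilde{\gamma}_{0},\tilde{\gamma}_{1}),\ l(\gamma)/2$, whose nondegeneracy coincides precisely with the defining relation $\sinh(w)\sinh(l(\gamma)/2)=1$ for $w$, and care is needed to verify this extremal case against the torsion-free, parabolic-free structure of $\Gamma$.
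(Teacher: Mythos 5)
The paper does not actually prove this lemma---it is quoted from Farb--Margalit (their Lemma 13.6) and used as a black box---so there is no internal proof to compare against; I can only assess your argument on its own terms. Your reduction is the standard one and is sound: distinct lifts of a simple closed geodesic are pairwise disjoint, the stabilizer of a fixed lift $\tilde{\gamma}_{0}$ in the torsion-free deck group is $\langle T_{0}\rangle$, the quotient $\tilde{C}/\langle T_{0}\rangle$ is a compact annulus mapping onto $C_{\gamma}$, and embeddedness follows once the closed $w$-tubes around distinct lifts are disjoint. Note, though, that for the \emph{closed} collar you need the strict inequality $d(\tilde{\gamma}_{0},\tilde{\gamma}_{1})>2w$, not $\geqslant 2w$ as you wrote; your exclusion of parabolics does deliver strictness, so this is only a slip in stating the reduction. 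Also, the lemma as stated allows an arbitrary hyperbolic surface $X$, where $\Gamma$ may contain parabolics; your standing no-parabolics assumption is harmless for every application made in the paper (closed doubles and compact surfaces with geodesic boundary), but without it you would only get the non-strict inequality and hence the embedded open collar.

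The one step that would fail as written is the choice of the element $T_{0}T_{1}^{-1}$. Every $\phi\in\Gamma$ with $\phi(\tilde{\gamma}_{0})=\tilde{\gamma}_{1}$ lies in a single coset $\phi_{0}\langle T_{0}\rangle$, and all of these conjugate $T_{0}$ to the \emph{same} element $T_{1}$, so you have no control over whether $T_{1}$ translates ``parallel'' or ``anti-parallel'' to $T_{0}$ relative to their common perpendicular. A direct matrix computation shows that the two traces $\operatorname{tr}(T_{0}T_{1})$ and $\operatorname{tr}(T_{0}T_{1}^{-1})$ are, in one order or the other depending on that relative orientation,
\[
2+4\sinh^{2}\bigl(l(\gamma)/2\bigr)\cosh^{2}\bigl(d/2\bigr)
\qquad\text{and}\qquad
2-4\sinh^{2}\bigl(l(\gamma)/2\bigr)\sinh^{2}\bigl(d/2\bigr),
\]
where $d=d(\tilde{\gamma}_{0},\tilde{\gamma}_{1})$. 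The first is always $>2$, so that product is unconditionally hyperbolic and yields no information; only the product realizing the second trace converts ``not elliptic (torsion-free), not parabolic, not the identity'' into $\sinh(l(\gamma)/2)\sinh(d/2)>1$, which is exactly $d>2w$. The repair is easy---consider both $T_{0}T_{1}$ and $T_{0}T_{1}^{-1}$ (neither is the identity, since either equality would force $\tilde{\gamma}_{1}=\tilde{\gamma}_{0}$) and apply the classification to whichever has the second trace---but as literally written, ``the standard trace formula \ldots{} translates the hyperbolicity of $T_{0}T_{1}^{-1}$ into exactly the displayed inequality'' is false for one of the two possible orientations of $T_{1}$. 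Your closing remark about the degenerate right-angled configuration with sides $l(\gamma)/2$, $d$, $l(\gamma)/2$ is the correct geometric picture for the informative product, and the degeneracy relation you quote does match the parabolic case of that trace.
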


Let $X$ be a hyperbolic surface of finite type and $\tilde{X}$ the
universal covering space of $X$.

Let $W$ be a manifold. Suppose that $F:W\rightarrow X$ is a continuous
map and $\tilde{F}:W\rightarrow\tilde{X}$ is a lift of $F$. We call
$\tilde{\tau}=\tilde{F}(W)$ a \emph{lift} of $\tau=F(W)$ to $\tilde{X}$.
Notice that the lift of $\tau$ depends not only on $\tau$, but also
on the choice of $F$. When $\tau$ is a geodesic, we shall always
choose $F$ such that $\tilde{\tau}$ is also a geodesic.

There is a natural homeomorphism from $S^{1}$ to the boundary of
$\tilde{X}$. Set $A=\{(x,x):x\in S^{1}\}$. Define an equivalence
relation $\sim$ on $S^{1}\times S^{1}$ by identifying each $(x,y)$
with $(y,x)$. Then the space $G(\tilde{X})$ of geodesics of $\tilde{X}$
is homeomorphic to $(S^{1}\times S^{1}-A)/\sim$.

We generalize the notion of ``spiral'' as follows. Let $\tau,\rho$
be two distinct geodesics in $X$. We say that $\tau$ \emph{spirals
towards} $\rho$ if there exist a lift $\tilde{\rho}$ of $\rho$
to $\tilde{X}$ and a sequence of lifts $\{\tilde{\tau}_{k}\}_{k=1}^{\infty}$
of $\tau$ to $\tilde{X}$ such that $\tilde{\tau}_{k}$ tends to
$\tilde{\rho}$ as $k\rightarrow\infty$. For example, if $\rho$
is a closed geodesic and there exist a lift $\tilde{\rho}$ of $\rho$
and a lift $\tilde{\tau}$ of $\tau$ such that $\tilde{\rho},\tilde{\tau}$
have a common ideal endpoint, then $\tau$ spirals towards $\rho$.

Notice that if $\tau$ is a simple geodesic, $\rho$ is a simple closed
geodesic, and $\tau$ spirals towards $\rho$, then there exist a
lift $\tilde{\tau}$ of $\tau$ and a lift $\tilde{\rho}$ of $\rho$
such that $\tilde{\tau},\tilde{\rho}$ have a common ideal endpoint.
Therefore no confusion arises in the following lemma.
\begin{lem}[{\cite[Corollary 3.8.8]{hubbard2006teichmuller}}]
\label{lem:collar}Let $X$ be a closed hyperbolic surface and $\gamma$
a simple closed geodesic on $X$. Then any simple geodesic on $X$
that enters the collar $C_{\gamma}$ either intersects $\gamma$ or
spirals towards $\gamma$.
\end{lem}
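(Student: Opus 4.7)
The plan is to prove the lemma by lifting to the universal cover $\tilde{X}=\mathbb{H}^{2}$ and arguing by contradiction. Suppose $\tau$ is a simple geodesic that enters $C_{\gamma}$ with $\tau\cap\gamma=\emptyset$; I aim to show $\tau$ spirals towards $\gamma$. Fix a lift $\tilde{\gamma}$ of $\gamma$ with cyclic stabilizer $\langle g\rangle$ in $\pi_{1}(X)$, and let $\tilde{C}$ be the component of the preimage of $C_{\gamma}$ containing $\tilde{\gamma}$, i.e., a strip of hyperbolic width $w$ on each side of $\tilde{\gamma}$. Some lift $\tilde{\tau}$ of $\tau$ meets the interior of $\tilde{C}$, and since $\tau\cap\gamma=\emptyset$ the lift $\tilde{\tau}$ does not cross $\tilde{\gamma}$, so both ideal endpoints of $\tilde{\tau}$ lie on the closure of a single component of $S^{1}_{\infty}$ minus the two ideal endpoints of $\tilde{\gamma}$. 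The remaining task is to forbid the case that both endpoints are strictly away from those of $\tilde{\gamma}$.

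Normalize in the upper half plane so that $\tilde{\gamma}$ is the positive $y$-axis, $g(z)=\lambda z$ with $\lambda=e^{l(\gamma)}$, and the ideal endpoints of $\tilde{\tau}$ are positive reals $p<q$; then $\tilde{\tau}$ is the Euclidean semicircle spanning $[p,q]$. The strategy is to pit two quantitative facts against each other. First, a short calculation gives $d(\tilde{\tau},\tilde{\gamma})=\operatorname{arcsinh}\bigl(2\sqrt{pq}/(q-p)\bigr)$; combined with the identity $\sinh(l(\gamma)/2)=(\lambda-1)/(2\sqrt{\lambda})$ and the strict monotonicity of $f(x)=(x-1)/\sqrt{x}$ on $[1,\infty)$, the strict-entry condition $d(\tilde{\tau},\tilde{\gamma})<w$ is equivalent to $r>\lambda$, where $r:=q/p$. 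Second, $g(\tilde{\tau})$ is the semicircle spanning $[\lambda p,\lambda q]$; since $\tilde{\tau}\neq\tilde{\gamma}$, we have $g(\tilde{\tau})\neq\tilde{\tau}$ (otherwise $\tilde{\tau}$ would be the axis of $g$), so by the simplicity of $\tau$ these two distinct lifts must be disjoint in $\mathbb{H}^{2}$, which forces $[p,q]$ and $[\lambda p,\lambda q]$ to be unlinked, i.e., $\lambda p\geq q$, i.e., $r\leq\lambda$. These two conclusions contradict each other.

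Therefore one ideal endpoint of $\tilde{\tau}$ must coincide with an ideal endpoint of $\tilde{\gamma}$, which by the definition recalled in the paper (applicable since $\gamma$ is simple closed and $\tau$ simple) means $\tau$ spirals towards $\gamma$. The main obstacle, and the real content of the lemma, is the exact numerical compatibility between the collar width $w$ and the translation length $l(\gamma)$: the strict-entry inequality must translate to precisely $r>\lambda$, which is in turn precisely the linking condition for $\tilde{\tau}$ and $g(\tilde{\tau})$ to cross transversally. This is the reason the collar width is defined as $w=\operatorname{arcsinh}(1/\sinh(l(\gamma)/2))$, and once the hyperbolic distance formula from a semicircle to a vertical geodesic in the upper half plane is in hand, the proof reduces to the monotonicity of $f(x)=(x-1)/\sqrt{x}$.
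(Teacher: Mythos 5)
The paper does not actually prove this lemma---it is imported verbatim from Hubbard (Corollary 3.8.8)---so there is no in-paper argument to compare against; your proof is correct and is essentially the standard one. The computations check out: $\sinh d(\tilde{\tau},\tilde{\gamma})=2\sqrt{pq}/(q-p)$ (equivalently $\cosh d=(q+p)/(q-p)$), $\sinh(l(\gamma)/2)=(\lambda-1)/(2\sqrt{\lambda})$, and the monotonicity of $(x-1)/\sqrt{x}$ turns strict entry into $q/p>\lambda$, which is exactly the linking condition for $[p,q]$ and $[\lambda p,\lambda q]$; since distinct lifts of a simple geodesic cannot cross, the only escape is an ideal endpoint shared with $\tilde{\gamma}$, i.e.\ spiraling, as required. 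The one caveat is that with the paper's closed collar $C_{\gamma}=\{x:d(x,\gamma)\leqslant w\}$ you must read ``enters'' as meeting the \emph{interior}: in the tangential case $d(\tilde{\tau},\tilde{\gamma})=w$ one gets $q/p=\lambda$, where $\tilde{\tau}$ and $g(\tilde{\tau})$ are merely asymptotic and your dichotomy yields no contradiction, so that borderline configuration is excluded only by the strict-entry reading (which is the intended one).
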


A compact hyperbolic surface with totally geodesic boundary is called
\emph{a pair of pants} if it is homeomorphic to a disk with two holes.
\begin{thm}[{\cite[Theorem 3]{liu2019geodesic}}]
\label{thm:LY}A bijection between the interiors of two pairs of
pants is an isometry if it and its inverse map each geodesic onto
some geodesic.
\end{thm}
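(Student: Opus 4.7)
The plan is to show that $f$ preserves a pants decomposition of $R_1$ and then reduce to Theorem \ref{thm:LY} pant by pant. Since $f$ is a set-theoretic bijection sending geodesics to geodesics, one has $f(\alpha\cap\beta)=f(\alpha)\cap f(\beta)$ for any two geodesics $\alpha,\beta$; hence $f$ preserves disjointness, cardinalities of intersections, and all the incidence combinatorics of the geodesic system in $S_i$. These invariants are the only tools available, since $f$ is not a priori continuous.

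The crux of the argument is an intrinsic, incidence-theoretic characterization of simple closed geodesics. The target is a statement along the following lines: a geodesic $\gamma$ in $S_i$ is a simple closed geodesic if and only if there is a sequence of distinct simple geodesics $(\tau_n)$, each meeting $\gamma$ in exactly one point, that approximate $\gamma$ in the sense that any simple geodesic disjoint from cofinitely many of the $\tau_n$ is also disjoint from $\gamma$. Simplicity itself must be detected incidence-theoretically through interaction with collars of nearby closed geodesics; this is bootstrapped using the Collar Lemma and Lemma \ref{lem:collar}, which together translate ``$\tau$ enters the collar of $\gamma$'' into a statement about how $\tau$ meets finitely many auxiliary geodesics. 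Once such characterizations are in place, $f$ necessarily sends simple closed geodesics to simple closed geodesics.

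Granting this, fix a pants decomposition $\Gamma=\{\gamma_1,\ldots,\gamma_k\}$ of $R_1$ by simple closed geodesics of $S_1$. Then $\Gamma':=\{f(\gamma_j)\}$ is a family of pairwise disjoint simple closed geodesics of $S_2$. Non-isotopy of disjoint pairs (two disjoint simple closed geodesics cobound an annulus iff every simple geodesic meeting one of them meets the other, modulo finitely many exceptions) and maximality of the family are both incidence-theoretic, so $\Gamma'$ is again a pants decomposition. Next I would characterize the components of $S_i\setminus\bigcup_j\gamma_j$: two points $p,q$ lie in the same component iff there is a geodesic $\tau$ containing both whose ``arc between $p$ and $q$'' is disjoint from $\bigcup_j\gamma_j$. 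Reconstructing ``arc between'' requires recovering the linear (or cyclic) order of points on a geodesic via separating geodesics, a standard projective-geometric technique carried out here by incidence. Thus $f$ matches pants $P\subset S_1$ bijectively with pants $P'\subset S_2$, and the intrinsic pant-geodesics of $P$ — precisely the $\Gamma$-free arcs of $S_1$-geodesics together with the closed geodesics lying inside $P$ — correspond to those of $P'$. Theorem \ref{thm:LY} then gives that each $f|_P$ is an isometry, and since the local isometries agree on $\Gamma$ they assemble into a global isometry, which is also what Proposition \ref{prop:isometry} would deliver once $f$ is recognised as a homeomorphism.

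The main obstacle is the incidence-theoretic classification of simple closed geodesics. Since $f$ need not be continuous, the notion of ``spiraling,'' which is defined via lifts to the universal cover, has to be replaced by purely combinatorial conditions on how geodesics of $S_i$ meet one another. This mirrors the approach of \cite{liu2019geodesic} in the pair-of-pants case but requires substantial refinements to cope with the richer geodesic combinatorics of higher genus and more boundary components; the recovery of betweenness on a geodesic in Step 4 is a secondary technical point of the same flavor.
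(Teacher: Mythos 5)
Your argument is circular: the statement you are asked to prove \emph{is} Theorem \ref{thm:LY}, the pair-of-pants base case, yet your plan is to ``reduce to Theorem \ref{thm:LY} pant by pant.'' What you have sketched is a strategy for the general Theorem \ref{Theorem:main} that takes the pair-of-pants case as an input. Worse, even read on its own terms the reduction collapses in exactly the case at hand: for $(g,n)=(0,3)$ Proposition \ref{prop:pants_decomposition} gives $N=3g+n-3=0$, so the interior of a pair of pants contains \emph{no} simple closed geodesics, the family $\Gamma$ you fix is empty, the unique ``pant'' is all of $R_1$, and your final step is literally an appeal to the theorem being proved.

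This case cannot be obtained by any decomposition argument; it is the irreducible base case, which is precisely why the paper excludes $(g,n)=(0,3)$ throughout Sections \ref{sec:A-preliminary-classification}--\ref{sec:Isometry} and imports Theorem \ref{thm:LY} from \cite{liu2019geodesic} rather than reproving it. A genuine proof must work directly with the geodesics of a pair of pants: with no essential simple closed curves in the interior, neither the curve-complex machinery nor the collar/spiraling characterization of simple closed geodesics (the crux of your sketch) is available, and one instead classifies the finitely many homotopy classes of simple complete geodesics --- the associations between the three boundary components --- by their mutual intersection patterns and reconstructs points and continuity from those. Separately, even as a sketch of the general theorem your final gluing step is unjustified: knowing $f$ is an isometry on each open pair of pants says nothing a priori about $f$ on the decomposing curves themselves, and the component isometries need not match across a curve $\gamma_j$ (the twist parameter is uncontrolled); the paper avoids this by establishing continuity of $f$ globally via Lemma \ref{lem:neighborhood} and only then invoking Proposition \ref{prop:isometry}.
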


Let $R$ be a compact hyperbolic surface with boundary and $S$ the
interior of $R$. A set $\mathcal{L}$ of mutually disjoint simple
closed geodesics in $S$ is called \emph{maximal} if it intersects
every simple closed geodesic in $S$. We call a maximal set $\mathcal{L}=\{L_{j}\}_{j=1}^{N}$
of mutually disjoint simple closed geodesics in $S$ a \emph{system
of decomposing curves}, and the family $\mathcal{P}=\{P_{k}\}_{k=1}^{M}$
consisting of all connected components of $S-\bigcup_{j=1}^{N}L_{j}$
the \emph{pants decomposition} of $R$ (or $S$) corresponding to
$\mathcal{L}$.
\begin{prop}
\label{prop:pants_decomposition}Let $\mathcal{L}=\{L_{j}\}_{j=1}^{N}$
be an arbitrary system of decomposing curves on a compact hyperbolic
surface $R$ of genus $g$ with $n$ geodesic boundary components,
and let $\mathcal{P}=\{P_{k}\}_{k=1}^{M}$ be the pants decomposition
of $R$ corresponding to $\mathcal{L}$. Then $M$ and $N$ satisfy
\[
N=3g+n-3\quad\text{and}\quad M=2g+n-2.
\]
Therefore the area of $R$ is $(4g+2n-4)\pi$.
\end{prop}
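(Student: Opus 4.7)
The strategy is to combine topology (Euler characteristic, counting boundary components) with the Gauss--Bonnet theorem, after first showing that each connected component $P_k$ of the complement is topologically a pair of pants. This last point is the only step that uses the maximality hypothesis in a nontrivial way.

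The plan is to begin by showing that each closure $\overline{P_k}$ is homeomorphic to a three-holed sphere. Since $R$ is compact and $\mathcal{L}$ is finite, $\overline{P_k}$ is a compact surface with boundary, whose boundary components come from the $L_j$'s and from $\partial R$. If $\overline{P_k}$ were an annulus, two of its boundary circles (each being a simple closed geodesic in $R$) would be freely homotopic in $R$, forcing them to coincide --- a contradiction. If $\overline{P_k}$ had positive genus or at least four boundary components, one could produce an essential simple closed curve in $P_k$ that is not peripheral; its unique geodesic representative would be a simple closed geodesic in $S$ disjoint from $\bigcup L_j$, so $\mathcal{L} \cup \{\text{new geodesic}\}$ would still be a set of mutually disjoint simple closed geodesics. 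This contradicts the maximality of $\mathcal{L}$. Hence $\overline{P_k}$ is a pair of pants, so $\chi(\overline{P_k})=-1$.

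Next, I would compute $M$ via additivity of Euler characteristic. Since $R$ is the union of the $\overline{P_k}$'s glued along the circles $L_j$, and each $L_j$ has Euler characteristic zero, inclusion--exclusion gives
\[
\chi(R)=\sum_{k=1}^{M}\chi(\overline{P_k})-\sum_{j=1}^{N}\chi(L_j)=-M.
\]
Combined with $\chi(R)=2-2g-n$ this yields $M=2g+n-2$. To obtain $N$, I would count boundary circles of the pants in two ways. Each $\overline{P_k}$ has exactly $3$ boundary circles, so $\sum_k \#\partial \overline{P_k}=3M$; on the other hand each $L_j$ is interior to $R$ and thus appears as a boundary circle of pants exactly twice, while each of the $n$ boundary components of $R$ appears exactly once. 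Hence $3M=2N+n$, giving
\[
N=\tfrac{1}{2}(3(2g+n-2)-n)=3g+n-3.
\]

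Finally, the area computation follows from Gauss--Bonnet applied to each pair of pants: with curvature $K\equiv -1$ and totally geodesic boundary, $\mathop{\mathrm{Area}}(\overline{P_k})=-2\pi\chi(\overline{P_k})=2\pi$. Summing over the $M$ pants gives $\mathop{\mathrm{Area}}(R)=2\pi M=(4g+2n-4)\pi$. The only genuinely subtle step is the topological classification of $\overline{P_k}$, which depends on two standard facts from hyperbolic surface theory --- that any essential non-peripheral simple closed curve has a simple closed geodesic representative, and that two distinct simple closed geodesics freely homotopic in $R$ cannot both be simple and disjoint --- so this is where I would be most careful to cite or verify the needed statements.
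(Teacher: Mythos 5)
The paper states this proposition without any proof, treating it as a standard fact about pants decompositions, so there is no argument of the author's to compare yours against. Your proof is correct and is the standard one: the maximality of $\mathcal{L}$ forces each complementary piece to be a pair of pants, after which the Euler characteristic computation, the boundary-circle count $3M=2N+n$, and Gauss--Bonnet give $M$, $N$, and the area. The only case you do not explicitly rule out in the classification of $\overline{P_k}$ is that of a disk (genus $0$ with a single boundary circle), but this is immediate since a simple closed geodesic cannot bound a disk in a hyperbolic surface (Gauss--Bonnet would give it negative area); with that one-line addition the argument is complete. The one step you rightly flag as needing care --- that an essential non-peripheral simple closed curve in $P_k$ has its geodesic representative inside $P_k$, because $\overline{P_k}$ has totally geodesic boundary --- is indeed the point on which the maximality contradiction rests, and it holds by convexity of subsurfaces with geodesic boundary.
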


In the literature, the notion of ``fill'' is usually only considered
for geodesics. Here we generalize it to general subsets. We say a
subset of $S$ \emph{fills} $S$ if it intersects every geodesic in
$S$.

We use the Klein disk model $\Delta$ for hyperbolic plane. In this
model, geodesics are straight Euclidean segments.

The following theorem is essential to understand the behavior of geodesics
in hyperbolic surfaces, though we will not mention it in the proofs
since it is used too frequently.
\begin{thm}[Gauss-Bonnet]
In the hyperbolic plane, an $n$-gon with angles $\alpha_{1},\cdots,\alpha_{n}$
has area $(n-2)\pi-(\alpha_{1}+\cdots+\alpha_{n})$.
\end{thm}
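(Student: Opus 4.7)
The plan is to reduce the statement to the case $n=3$ (triangles) and then bootstrap to arbitrary $n$-gons by triangulation.

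For triangles, I would first work in the upper half-plane model $\mathbb{H}^{2}=\{x+iy:y>0\}$ with area form $dx\,dy/y^{2}$. After applying a suitable isometry, a triangle with one ideal vertex at $\infty$ and finite side lying on a Euclidean semicircle admits a direct integration that yields area $\pi-\alpha-\beta$, where $\alpha,\beta$ are the two finite angles. To handle a compact triangle $ABC$, I extend the geodesic through $A$ and $B$ past $B$ to its ideal endpoint $\Omega$, apply the asymptotic formula to both $AC\Omega$ and $BC\Omega$, and subtract to obtain $\mathrm{Area}(ABC)=\pi-\alpha-\beta-\gamma$. Alternatively, one can simply quote the general Gauss--Bonnet theorem for a Riemannian disk, $\int_{T}K\,dA+\int_{\partial T}\kappa_{g}\,ds+\sum(\pi-\alpha_{i})=2\pi\chi(T)$, and substitute $K=-1$, $\kappa_{g}=0$ (since the sides are geodesic), and $\chi(T)=1$.

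For an $n$-gon with $n>3$, I would triangulate by joining a suitable vertex $v_{0}$ to every non-adjacent vertex, producing $n-2$ triangles $T_{1},\ldots,T_{n-2}$. Summing the triangle formula yields total area $(n-2)\pi-\Sigma$, where $\Sigma$ is the sum of all angles appearing in all $T_{i}$. Since the only new edges meet at $v_{0}$, the triangle angles at $v_{0}$ exactly partition the polygon angle at $v_{0}$; the angle at each other vertex of the polygon is either untouched or split into two triangle angles whose sum equals the original polygon angle. Hence $\Sigma=\alpha_{1}+\cdots+\alpha_{n}$, which gives the claimed formula.

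The main obstacle is that a general simple $n$-gon need not be convex, so joining a single vertex to all others may force diagonals outside the polygon. The standard way around this is to invoke the hyperbolic analogue of the two-ears theorem: every simple geodesic polygon admits a triangulation by non-crossing interior diagonals. The angle-partition bookkeeping then still closes, because each interior diagonal separates precisely the two triangles sharing it and partitions the angles at its endpoints. In all convex cases, which are the only ones that actually arise in the intersection patterns of geodesics considered in this paper, the difficulty disappears and one may triangulate from any vertex.
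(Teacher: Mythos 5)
Your proposal is correct, and it is the standard textbook proof (asymptotic triangles in the upper half-plane, subtraction to get the compact triangle case, then triangulation with angle bookkeeping). The paper itself offers no proof of this statement --- it is quoted as classical background, with the remark that it will be used silently throughout --- so there is nothing to compare against; your argument fills that gap correctly. One small simplification for the non-convexity issue you raise at the end: the paper works in the Klein disk model, in which geodesics are Euclidean straight segments, so a simple hyperbolic geodesic $n$-gon is literally a simple Euclidean polygon and the ordinary Euclidean triangulation (two-ears) theorem applies verbatim; no separate hyperbolic analogue is needed.
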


\subsection{Arc and curve complex}

Let $\overline{X}$ be a closed surface of genus $g$ and $B=\{p_{1},\cdots,p_{n}\}\subset\overline{X}$.
Then $X=\overline{X}-B$ is a surface of genus $g$ with $n$ punctures.

Let $\alpha:[0,1]\rightarrow\overline{X}$ be a path such that $\alpha(0),\alpha(1)\in B$.
We call $\tau=\alpha((0,1))$ an \emph{arc} in $X$ if $\tau\subset X$.
We say $\tau$ is \emph{essential} if $\alpha$ is not nullhomotopic
relative to $B$.

We say that two arcs $\tau_{0},\tau_{1}$ in $X$ are \emph{homotopic}
if there exists a continuous map $H:[0,1]\times[0,1]\rightarrow\overline{X}$
such that
\begin{itemize}
\item $H((0,1)\times\{0\})=\tau_{0}$ and $H((0,1)\times\{1\})=\tau_{1}$
\item $H(0,t),H(1,t)\in B$ for all $t\in[0,1]$
\item $H((0,1)\times[0,1])\subset X$.
\end{itemize}
The \emph{arc and curve complex} of $X$, denoted by $AC(X)$, is
the abstract simplicial complex whose $k$-simplices, for each $k\geqslant0$,
are collections of distinct homotopy classes of $k+1$ disjoint and
mutually non-homotopic one-dimensional submanifolds which can be either
essential simple closed curves or essential simple arcs in $X$. We
will denote by $|AC(X)|$ the set of vertices of $AC(X)$.
\begin{thm}[see \cite{korkmaz2010arc}]
\label{thm:KP}Every simplicial automorphism of $AC(X)$ is induced
by the action of a self-homeomorphism of $X$.
\end{thm}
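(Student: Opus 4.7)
The plan is to decode the topology of $X$ from the combinatorics of $AC(X)$, so that any simplicial automorphism $\varphi$ of $AC(X)$ must be realized by a self-homeomorphism. I would proceed in three stages: first distinguish arcs from curves combinatorially, then invoke rigidity of the arc subcomplex to produce a candidate homeomorphism, and finally verify that this homeomorphism realizes $\varphi$ on the closed-curve vertices as well.

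For the first stage, the link of a vertex $v$ in $AC(X)$ is combinatorially isomorphic to $AC(X_{v})$, where $X_{v}$ is the (possibly disconnected) surface obtained by cutting $X$ along the underlying submanifold of $v$. Cutting along a simple closed curve changes the genus and/or the number of boundary components, while cutting along an arc reduces the number of punctures and can change connectivity. By Proposition \ref{prop:pants_decomposition} these operations produce distinguishable link complexes, with different dimensions and different counts of vertices of each local topological type. An inductive bookkeeping on these invariants forces $\varphi$ to preserve the partition of vertices into arcs versus simple closed curves, and more refinedly to preserve the topological type of the complementary surface.

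Next, I would restrict $\varphi$ to the arc subcomplex $A(X)\subset AC(X)$. Its top-dimensional simplices correspond precisely to ideal triangulations of $X$, and any two ideal triangulations are connected by a finite sequence of elementary flips, so $\varphi$ induces an automorphism of the flip graph. Invoking the known rigidity of the arc complex (automorphisms of $A(X)$ of a surface of sufficient complexity are induced by self-homeomorphisms, due to Irmak--McCarthy and others), one obtains a self-homeomorphism $h\colon X\to X$ whose induced action on arcs agrees with $\varphi$. To see that $h$ also realizes $\varphi$ on simple closed curves, note that any essential simple closed curve $\gamma$ is determined up to isotopy by the set of essential arcs disjoint from it: if $\gamma'\neq\gamma$ is another essential simple closed curve, one can produce an arc disjoint from $\gamma$ but meeting $\gamma'$ by starting from an ideal triangulation adapted to a collar neighborhood of $\gamma$ and using the collar lemma. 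Since $\varphi$ and the map induced by $h$ agree on all arcs, they must agree on closed curves as well, so $\varphi$ is induced by $h$.

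The main obstacle lies in the first stage, particularly for surfaces of low complexity (small genus and few punctures) where the link invariants of arcs and closed curves can accidentally coincide and where $AC(X)$ may even admit exotic automorphisms. These few exceptional surfaces would have to be handled case by case, by enumerating the orbits of $\varphi$ on the vertex set of $AC(X)$ directly and ruling out non-geometric bijections by hand.
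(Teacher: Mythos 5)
First, be aware that the paper offers no proof of Theorem \ref{thm:KP}: it is quoted from \cite{korkmaz2010arc} and used as a black box, so the only meaningful comparison is with the proof in that reference. Your outline does reproduce its overall strategy: separate arc-vertices from curve-vertices by combinatorial invariants, transfer the automorphism to the arc complex and invoke Irmak--McCarthy type rigidity to obtain a homeomorphism $h$, and then recover the action on simple closed curves from the fact that an essential simple closed curve is determined by the set of essential arcs disjoint from it. Your second and third stages are essentially correct and are exactly what is done in the literature.

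The concrete gap is in your first stage. The identification of the link of a vertex $v$ with $AC(X_{v})$ is false as stated: cutting along a simple closed curve produces boundary circles rather than punctures, and cutting along an arc produces a surface with boundary carrying marked points, so neither cut surface carries an arc and curve complex in the sense defined in this paper, and one must also worry about distinct isotopy classes in $X_{v}$ becoming identified in $X$. Moreover, Proposition \ref{prop:pants_decomposition} computes the number $3g+n-3$ of decomposing curves and the area of $R$; it says nothing about the dimension of $AC(X)$ or of a link, so it cannot supply the ``distinguishable dimensions'' your bookkeeping requires. The separation of arcs from curves that actually works, and that is used in \cite{korkmaz2010arc}, goes through top-dimensional simplices: the simplices of $AC(X)$ of maximal dimension $6g+3n-7$ are precisely the ideal triangulations, which consist of arcs only, and an Euler characteristic count shows that no simplex containing a curve-vertex attains this dimension; hence ``$v$ lies in a simplex of dimension $6g+3n-7$'' is a purely simplicial characterization of arc-vertices, manifestly preserved by any automorphism. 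With that substitution your outline becomes, in substance, the proof in \cite{korkmaz2010arc}, up to the verification that the low-complexity exceptional surfaces (for which $AC$ or the arc complex admits exotic automorphisms) do not occur under the standing hypotheses $n\geqslant1$ and $(g,n)\neq(0,3)$.
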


See \cite{korkmaz2010arc} for more details on arc and curve complex.

\subsection{Elementary move}

In \cite{mosher1988tiling}, Mosher introduced the notion of elementary
moves on ideal triangulations. Let $X$ be a punctured surface of
finite type. Let $\delta$ be an \emph{ideal triangulation} of $X$,
i.e., a cell division of $X$ such that each 1-cell of $\delta$ is
a simple arc and each 2-cell of $\delta$ is a triangle. If $h$ is
an edge of $\delta$ that bounds two distinct triangles of $\delta$,
then these two triangles are joined along $h$ to form a quadrilateral;
removing $h$ and inserting the opposite diagonal of this quadrilateral,
we obtain a new ideal triangulation $\delta'$ which is said to be
obtained from $\delta$ by an \emph{elementary move}. See \cite{mosher1988tiling}
for more precise definitions.

Furthermore, If $X$ is equipped with a hyperbolic metric, and the
edges of $\delta$ and $\delta'$ are geodesics, then we say the elementary
move is \emph{geodesic}.
\begin{thm}[Connectivity Theorem for Elementary Moves, in \cite{mosher1988tiling}]
\label{thm:Mosher}Any two ideal triangulations of $X$ are related
by a finite sequence of elementary moves.
\end{thm}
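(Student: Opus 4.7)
The plan is to fix a complete hyperbolic metric of finite area on $X$ (with each puncture a cusp), represent every ideal triangulation by its geodesic representative, and induct on the total geometric intersection number
\[
I(\delta,\delta')=\sum_{e\in\delta,\;e'\in\delta'}|e\cap e'|,
\]
which is finite because two distinct bi-infinite geodesics meet in only finitely many points. The goal is to show that $I(\delta,\delta')=0$ forces $\delta=\delta'$, and that whenever $I(\delta,\delta')>0$ there is an elementary move taking $\delta$ to some $\delta_1$ with $I(\delta_1,\delta')<I(\delta,\delta')$.

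If $I(\delta,\delta')=0$, each geodesic edge $e'$ of $\delta'$ is disjoint from every edge of $\delta$, so $e'$ lies in the closure of a single ideal triangle $T$ of $\delta$. Lifting to the universal cover $\tilde{X}$, the arc $e'$ sits inside an ideal triangle of $\tilde{X}$ and is the unique geodesic joining two of its ideal vertices, hence coincides with a side of that triangle. Projecting back, $e'$ coincides with an edge of $\delta$, so $\delta'\subseteq\delta$; since ideal triangulations of $X$ have a common finite cardinality (an Euler-characteristic count), this forces $\delta=\delta'$.

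Now assume $I(\delta,\delta')>0$ and pick an edge $e\in\delta$ with $|e\cap\delta'|>0$. Let $T_1,T_2$ be the two ideal triangles of $\delta$ sharing $e$; together they form an ideal quadrilateral $Q$ having $e$ as a diagonal. The elementary move on $e$ replaces it by the other diagonal $e^{*}$ of $Q$, and since no other edge of $\delta$ is altered, the change in total intersection is exactly $|e^{*}\cap\delta'|-|e\cap\delta'|$. Every geodesic arc $\alpha\in\delta'$ meets the convex region $Q$ in a disjoint union of sub-arcs, each connecting two sides of $Q$; the contribution of such a sub-arc to $|e\cap\delta'|$ and to $|e^{*}\cap\delta'|$ depends only on which ordered pair of sides of $Q$ it joins. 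The aim is to choose $e$ so that the sum of these contributions satisfies $|e^{*}\cap\delta'|<|e\cap\delta'|$.

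The principal obstacle is precisely this last step: a blind choice of $e$ can leave $I$ unchanged or even increase it, so a careful selection is needed. The intended recipe is to follow some arc $e'\in\delta'$ that crosses $\delta$, let $e$ be the edge of $\delta$ at which the first crossing of $e'$ occurs, and analyze the finitely many combinatorial patterns in which sub-arcs of $\delta'\cap Q$ can join sides of $Q$. Using that two distinct geodesics in the hyperbolic plane meet at most once and that the arcs of $\delta'$ are pairwise disjoint, one shows that the contribution of $e'$ near the first crossing already tips the balance in favour of $e$, forcing a strict decrease of $I$. Once such a strictly reducing elementary move is produced whenever $I(\delta,\delta')>0$, induction on $I$ connects $\delta$ to $\delta'$ by a finite sequence of elementary moves, proving the theorem.
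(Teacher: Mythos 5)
First, a point of comparison: the paper does not prove Theorem \ref{thm:Mosher} at all --- it is quoted from Mosher \cite{mosher1988tiling} and used as a black box --- so there is no in-paper argument to measure your proposal against; it has to stand on its own.

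Your base case is fine ($I(\delta,\delta')=0$ forces each geodesic edge of $\delta'$ to be a side of the triangle of $\delta$ containing it, and an Euler-characteristic count finishes), and induction on geometric intersection number is a legitimate strategy that does appear in the literature. But the entire difficulty of the theorem sits in the inductive step, and your write-up stops exactly there: after correctly warning that ``a blind choice of $e$ can leave $I$ unchanged or even increase it,'' you prescribe flipping the edge $e$ of $\delta$ met by the \emph{first} crossing of some arc $e'\in\delta'$ and then assert that ``one shows'' this tips the balance. That assertion is the theorem, and as stated it is not true. Label the quadrilateral $Q$ by its ideal vertices $v,x,w,y$ in cyclic order, with $e=xy$, $e^{*}=vw$, and $e'$ emanating from $v$. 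The initial strand of $e'$ does trade one crossing with $e$ for none with $e^{*}$, and simplicity of $e'$ rules out strands cutting off $v$; but a later strand of $e'$, or a strand of another arc of $\delta'$, entering $Q$ through side $xw$ and leaving through side $wy$ cuts off the vertex $w$, meets $e^{*}$ once and $e$ not at all, and such strands are perfectly compatible with the arcs of $\delta'$ being simple, disjoint geodesics. So the prescribed flip can fail to decrease $I$, and the induction does not close. A second unaddressed gap is flippability: by the paper's own definition an elementary move is only available at an edge bounding two \emph{distinct} triangles, and the first edge crossed by $e'$ can be the interior edge of a self-folded triangle (e.g.\ when $e'$ leaves the base puncture $q$ of the loop through one of the two $q$-corners of that triangle), in which case the move you invoke does not exist. (A smaller quibble: two distinct bi-infinite geodesics on a hyperbolic surface can meet infinitely often --- the paper's ``recurrent'' geodesics do exactly this --- so finiteness of $I$ needs the fact that the edges are simple ideal arcs, not the general statement you give.) Repairing the main step requires a genuinely new idea --- Mosher's combinatorial argument, Hatcher's surgery argument for contractibility of the arc complex, or Penner's convex-hull cell decomposition --- none of which is a routine completion of your sketch, so the proposal as written has a genuine gap.
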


\section{Outline of proof}

Throughout the rest of this article we fix a compact hyperbolic surface
$R$ of genus $g$ with $n$ geodesic boundary components, where $(g,n)\neq(0,3)$.
We will denote by $S$ the interior of $R$. We also fix a bijection
$f:S\rightarrow S$ such that $f$ and $f^{-1}$ map each geodesic
onto some geodesic.

Let $\tau$ be a geodesic in $S$. For convenience we introduce the
following concepts:

(1) We say $\tau$ \emph{cofills}\footnote{In this article, we only need the notion of ``strongly cofill'' but
not ``cofill''. Nevertheless, we introduce the notion of ``cofill''
because it is important, since a geodesic in a hyperbolic surface
of finite type is simple if and only if it cofills the surface.} $S$ if the union of all geodesics disjoint from $\tau$ intersects
every geodesic in $S$ except $\tau$.

(2) We say $\tau$ \emph{strongly cofills} $S$ if for any geodesic
$\rho\neq\tau$ in $S$ there exists a geodesic other than $\rho$
that is disjoint from $\tau$ but intersects $\rho$.

(3) We say $\tau$ is \emph{strongly pre-spiraled} if there exists
a geodesic $\rho$ disjoint from $\tau$ such that every geodesic
other than $\tau$ that intersects $\tau$ will intersect $\rho$
infinitely many times.

(4) We say $\tau$ is \emph{recurrent} if every geodesic that intersects
$\tau$ will intersect $\tau$ infinitely many times.

(5) We say $\tau$ is \emph{self-isolated} if for any $x\in\tau$
there exists a neighborhood $U$ of $x$ such that $(U,U\cap\tau)$
is homeomorphic to (disk, diameter).

(6) We say $\tau$ is an \emph{association} between the boundaries
if $\tau$ is non-closed and hits or spirals towards some boundary
component of $R$ in each direction.

It is easy to check that the first four properties are preserved by
$f$.

Section \ref{sec:A-preliminary-classification} is devoted to establishing
Table \ref{tab:classification_geodesics}. Notice that in the first
column of Table \ref{tab:classification_geodesics} we have listed
all the geodesics in $S$.

\begin{table}[h]
\noindent \begin{centering}
\begin{tabular}{|>{\centering}p{0.38\textwidth}|c|c|c|}
\hline 
a geodesic $\tau$ in $S$ & strongly cofills $S$ & recurrent & strongly pre-spiraled\tabularnewline
\hline 
\hline 
self-intersects & No &  & \tabularnewline
\hline 
simple and spirals towards another simple geodesic in $S$ & No &  & \tabularnewline
\hline 
simple and is an association between the boundaries & Yes & No & No\tabularnewline
\hline 
simple closed & Yes & No & Yes\tabularnewline
\hline 
simple but not self-isolated &  & Yes & \tabularnewline
\hline 
\end{tabular}
\par\end{centering}
\caption{\label{tab:classification_geodesics}A preliminary classification
of geodesics in $S$}
\end{table}

From now on we denote the set of simple closed geodesics in $S$ by
$G_{c}$ and the set of simple geodesics in $S$ that are associations
between the boundaries by $G$. It follows from Table \ref{tab:classification_geodesics}
that $G_{c}$ and $G$ are invariant under $f$.

The set $G$ is the disjoint union of three subsets $G(p),G(\infty),G(p,\infty)$,
where:
\begin{itemize}
\item $\tau\in G(p)\Leftrightarrow\tau$ hits some boundary component of
$R$ in each direction;
\item $\tau\in G(\infty)\Leftrightarrow\tau$ spirals towards some boundary
component of $R$ in each direction;
\item $\tau\in G(p,\infty)\Leftrightarrow\tau$ hits some boundary component
of $R$ in one direction and spirals towards some boundary component
of $R$ in another direction.
\end{itemize}
In Section \ref{sec:Topological-type}, we characterize the elements
of $G(\infty)$ and $G(p,\infty)$ in terms of their intersections
with the elements of $G$. Since $G$ is invariant under $f$, we
conclude that $G(\infty)$ and $G(p,\infty)$ are invariant under
$f$, and hence that $G(p)$ is invariant under $f$.

Furthermore, we will show that $f$ ``induces'' a simplicial automorphism
of $AC(S)$ in some sense. Based on this observation, we find a homeomorphism
$h:S\rightarrow S$ such that $f(\tau)$ is homotopic to $h(\tau)$
for any $\tau\in G(p)$. We regard $h$ as the ``topological type''
of $f$. In Section \ref{sec:Isometry}, we use $h$ to show that
the preimage of any neighborhood of $f(\tau)$, under $f$, is a neighborhood
of $\tau$ for any $\tau\in G(p)$. As any $x\in S$ is the intersection
of some geodesics $\tau_{1},\tau_{2}\in G(p)$, we see that $f$ is
continuous. By applying the same argument to $f^{-1}$, we see that
$f^{-1}$ is also continuous, and hence that $f$ is a homeomorphism.
Therefore $f$ is an isometry by Proposition \ref{prop:isometry}.
Finally, we generalize this to prove Theorem \ref{Theorem:main} in
Section \ref{sec:Proof-of-main}.

\section{\label{sec:A-preliminary-classification}A preliminary classification
of geodesics}

We first check that in the first column of Table \ref{tab:classification_geodesics}
we have listed all the geodesics in $S$:
\begin{prop}
Let $\tau$ be a simple self-isolated geodesic in $S$. Then exactly
one of the following cases occurs:

(1) $\tau$ is a simple closed geodesic;

(2) $\tau$ is an association between boundaries;

(3) $\tau$ spirals towards another simple geodesic in $S$.
\end{prop}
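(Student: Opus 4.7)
The plan is to split on whether $\tau$ is closed: if it is, simplicity of $\tau$ places us directly in case (1). Otherwise $\tau$ admits an arc-length parametrization $\tau:I\to S$ on an open interval $I\subsetneq\mathbb{R}$, and I will analyze its two ends separately, say the forward end $t\to\sup I$.

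If $\sup I<\infty$ then maximality of $\tau$ as a locally shortest curve in $S$ forces $\tau(t)$ to exit every compact subset of $S$ as $t\to\sup I$, so this end hits a boundary component of $R$ in finite time. Otherwise $\sup I=\infty$ and the forward limit set $\lambda_{+}:=\bigcap_{T>0}\overline{\tau([T,\infty))}\subset R$ is nonempty and compact. A first key observation, using self-isolation critically, is that $\lambda_{+}\cap\tau=\emptyset$: if $\tau(s)\in\lambda_{+}$, take a self-isolated neighborhood $U$ of $\tau(s)$ with $U\cap\tau=\tau((s-\varepsilon,s+\varepsilon))$; then a sequence $t_{k}\to\infty$ with $\tau(t_{k})\to\tau(s)$ would, by simplicity of $\tau$, satisfy $t_{k}\in(s-\varepsilon,s+\varepsilon)$, a contradiction.

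I would then branch on whether $\lambda_{+}\subset\partial R$ or $\lambda_{+}\cap S\neq\emptyset$. In the first subcase the end spirals towards a boundary component of $R$, contributing to case (2). In the second, pick $y\in\lambda_{+}\cap S$ and $t_{k}\to\infty$ with $\tau(t_{k})\to y$, fix a lift $\tilde{y}\in\tilde{S}$, and choose deck transformations $\gamma_{k}$ so that $\gamma_{k}(\tilde{\tau}(t_{k}))\to\tilde{y}$. After passing to a subsequence so that the tangent direction of $\gamma_{k}(\tilde{\tau})$ at $\gamma_{k}(\tilde{\tau}(t_{k}))$ converges, the lifts $\tilde{\tau}_{k}:=\gamma_{k}(\tilde{\tau})$ converge to a geodesic $\tilde{\rho}$ of $\tilde{S}$ through $\tilde{y}$. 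Let $\rho$ be the geodesic in $S$ of which $\tilde{\rho}$ is a lift; then $y\in\rho$ and $y\notin\tau$, so $\rho\neq\tau$, and $\tilde{\tau}_{k}\to\tilde{\rho}$ witnesses that $\tau$ spirals towards $\rho$.

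It remains to check that $\rho$ is simple. If $\rho$ had a transverse self-intersection at $z\in\rho$, then applying the same limit construction at $z$ (translating along $\tilde{\rho}$ from $\tilde{y}$ to a lift of $z$) would produce two distinct branches of $\rho$ through $z$, each a limit of branches of $\tau$ through points converging to $z$ with distinct tangent directions. For large $k$ these two approximating branches of $\tau$ would then have to cross transversally in a small neighborhood of $z$, contradicting the simplicity of $\tau$. Combining the analyses of the two ends: if both are of the boundary type, $\tau$ is an association between boundaries, giving case (2); if some end produces an interior limit, $\tau$ spirals towards a simple geodesic in $S$, giving case (3). Mutual exclusivity is immediate because (1) is the only closed case, and (2) and (3) cannot coexist since (3) places $\rho\subset S$ in a limit set that (2) confines to $\partial R$. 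The main obstacle will be the simplicity argument for $\rho$: an a priori limit of branches of a simple geodesic could self-intersect, and eliminating this requires carefully combining simplicity and self-isolation of $\tau$ through the transverse-crossing picture outlined above.
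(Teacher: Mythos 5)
Your proof is correct and ends in the same place as the paper's — a limiting geodesic $\tilde{\rho}$ extracted from a sequence of lifts of $\tau$, shown to be simple because two crossing lifts of $\rho$ would force two distinct lifts of $\tau$ to cross, and shown to satisfy $\rho\neq\tau$ via self-isolation — but you reach that limit by a genuinely different route. The paper first embeds $\tau$ into a simple geodesic on the doubled surface $DR$ (with a Fenchel--Nielsen adjustment when one end hits $\partial R$), then uses the collar lemma and a pants decomposition to find a simple closed geodesic that $\tau$ crosses infinitely often, and takes the limit of the lifts of $\tau$ meeting a compact arc of $\tilde{\gamma}$. You instead analyze each end directly via its $\omega$-limit set $\lambda_{\pm}$ in the compact surface $R$: compactness supplies an accumulation point, the observation $\lambda_{+}\cap\tau=\emptyset$ isolates exactly where self-isolation enters (and is what later guarantees $\rho\neq\tau$), and the limit geodesic is produced by recentring lifts at a convergent sequence of basepoints and tangent directions. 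This bypasses the doubling construction, the collar lemma, and the pants decomposition entirely, which is a real economy; the price is that you must justify separately that an end with $\lambda_{+}\subset\partial R$ spirals towards a single boundary component (connectedness of the $\omega$-limit set plus the fact that a geodesic ray trapped in a half-collar is asymptotic to its core geodesic — worth a sentence in a full write-up). One stylistic suggestion: your simplicity argument for $\rho$ is cleanest phrased upstairs, as in the paper — if some deck transformation $\gamma$ carries $\tilde{\rho}$ to a transversally intersecting lift, then $\gamma(\tilde{\tau}_{k})\to\gamma(\tilde{\rho})$, so for large $k$ the distinct lifts $\tilde{\tau}_{k}$ and $\gamma(\tilde{\tau}_{k})$ of $\tau$ must cross — which replaces the slightly vague ``apply the same limit construction at $z$'' by a one-line deduction. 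The mutual-exclusivity discussion is no less rigorous than the paper's own, which omits it.
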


\begin{proof}
Suppose that $\tau$ is neither a simple closed geodesic nor an association
between boundaries. Then $\tau$ neither hits nor spirals towards
any boundary component of $R$ in one direction.

If $\tau$ does not hit $\partial R$ in the other direction, then
$\tau$ is a simple geodesic in the double $DR$ of $R$, where $DR$
is a closed surface obtained by taking two copies of $R$ and identifying
corresponding boundary components.

\begin{figure}[h]
\noindent \begin{centering}
\includegraphics{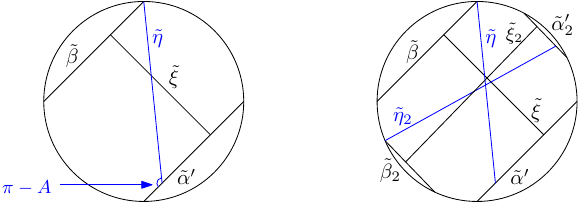}
\par\end{centering}
\caption{\label{fig:double_geodesic}If $\eta$ is a geodesic with self-intersection,
then there exists another lift $\tilde{\alpha}_{2}'\cup\tilde{\beta}_{2}\cup\tilde{\xi}_{2}\cup\tilde{\eta}_{2}$
of $\alpha'\cup\beta\cup\xi\cup\eta$ such that $\tilde{\eta}$ intersects
$\tilde{\eta}_{2}$. Now $\tilde{\xi}$ must intersect $\tilde{\xi}_{2}$,
which contradicts the fact that $\xi$ is simple.}
\end{figure}

If $\tau$ hits $\partial R$ in the other direction, we can adjust
$DR$ by a Fenchel-Nielsen deformation such that there exists a simple
geodesic $\overline{\tau}$ in $DR$ satisfying $\tau=\overline{\tau}\cap S$.
Let $DR=R\cup R'$, where $R'$ is isometric to $R$. Assume that
$\tau$ hits the boundary component $\alpha\subset\partial R$ at
an angle $A$. Let $\alpha'\subset\partial R'$ be the boundary component
glued to $\alpha$. We choose $\xi$ a simple geodesic segment in
$R'$ that connects $\alpha'$ to a simple closed geodesic $\beta$
in the interior of $R'$. Let $\tilde{\alpha}'\cup\tilde{\beta}\cup\tilde{\xi}$
be a lift of $\alpha'\cup\beta\cup\xi$ to the universal covering
space $\tilde{R}'\subset\Delta$. As illustrated in the left of Figure
\ref{fig:double_geodesic}, we choose a geodesic $\tilde{\eta}$ sharing
a common ideal endpoint with $\tilde{\beta}$ such that $\tilde{\eta}$
and $\tilde{\alpha}'$ meet at an angle $\pi-A$. Let $\eta$ denote
the projection of $\tilde{\eta}$ to $R'$. Since $\xi$ is simple,
$\eta$ is also simple. We cut $DR$ along $\alpha=\alpha'$ and glue
it back such that the endpoint of $\tau$ is identified with the endpoint
of $\eta$. Now $\overline{\tau}=\tau\cup\eta$ is a simple geodesic
in (new) $DR$.

In all cases above Lemma \ref{lem:collar} tells us that, for any
simple closed geodesic $\gamma$ in $S$, if $\tau$ enters the collar
$C_{\gamma}$, then $\tau$ either intersects $\gamma$ or spirals
towards $\gamma$.

\begin{figure}[h]
\noindent \begin{centering}
\includegraphics{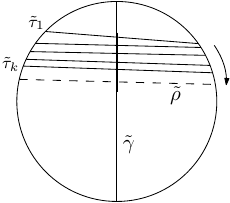}
\par\end{centering}
\caption{\label{fig:spiral}$\tau_{k}\rightarrow\rho$.}
\end{figure}

Suppose that $\tau$ does not spiral towards any simple \emph{closed}
geodesic in $S$. Using an arbitrary pants decomposition of $R$,
we see that there exists a simple closed geodesic $\gamma$ in $S$
that intersects $\tau$ infinitely many times. Let $\tilde{\gamma}$
be a lift of $\gamma$ to the universal covering space, then there
exists a compact subset of $\tilde{\gamma}$ that intersects infinitely
many lifts $\{\tilde{\tau}_{k}\}_{k=1}^{\infty}$ of $\tau$. Since
$\tau$ is simple, all the lifts of $\tau$ are mutually disjoint.
Thus there exists a subsequence of $\{\tilde{\tau}_{k}\}_{k=1}^{\infty}$
that tends to some geodesic $\tilde{\rho}$.

We claim that the projection $\rho$ of $\tilde{\rho}$ to $S$ is
a simple geodesic. Otherwise, there exists a lift $\tilde{\rho}'$
of $\rho$ that intersects $\tilde{\rho}$ in one point. There also
exists a sequence of lifts $\{\tilde{\tau}_{k}'\}_{k=1}^{\infty}$
of $\tau$ that tends to $\tilde{\rho}'$. Since $\tilde{\rho}$ intersects
$\tilde{\rho}'$, we see that $\tilde{\tau}_{k}$ intersects $\tilde{\tau}_{k}'$
for some $k$, a contradiction. Hence $\rho$ is simple.

Since $\tau$ is self-isolated, $\rho\neq\tau$. Therefore $\tau$
spirals towards the simple geodesic $\rho$ in $S$.
\end{proof}

Next, we prove five lemmas to establish Table \ref{tab:classification_geodesics}
line by line.
\begin{lem}
Let $\tau$ in $S$ be a geodesic with self-intersection. Then $\tau$
does not strongly cofill $S$, i.e., there exists a geodesic $\rho\neq\tau$
such that every geodesic $\alpha\neq\rho$ disjoint from $\tau$ is
disjoint from $\rho$.
\end{lem}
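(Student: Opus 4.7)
The plan is to lift $\tau$ to the universal cover $\tilde{S}$ and construct $\rho$ explicitly from the ideal quadrilateral spanned by two crossing lifts of $\tau$ at the self-intersection point. Because $\tau$ has a self-intersection at some point $p\in S$, there exist two distinct lifts $\tilde{\tau}_{1},\tilde{\tau}_{2}$ of $\tau$ in $\tilde{S}$ meeting transversely at a common lift $\tilde{p}$ of $p$. Label the two endpoints of $\tilde{\tau}_{1}$ in $\partial\tilde{S}$ by $e_{1},e_{3}$ and those of $\tilde{\tau}_{2}$ by $e_{2},e_{4}$; since the lifts cross, the four points appear in the cyclic order $e_{1},e_{2},e_{3},e_{4}$ along $\partial\tilde{S}$. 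I would take $\tilde{\rho}\subset\tilde{S}$ to be the geodesic with ideal endpoints $e_{1}$ and $e_{2}$ (well-defined by the convexity of $\tilde{S}$), and let $\rho$ be its projection to $S$.

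The heart of the proof is the disjointness claim: every geodesic $\alpha\neq\rho$ in $S$ disjoint from $\tau$ is disjoint from $\rho$. Given such an $\alpha$, pick any lift $\tilde{\alpha}$; since $\alpha\cap\tau=\emptyset$, $\tilde{\alpha}$ avoids every lift of $\tau$, in particular $\tilde{\tau}_{1}\cup\tilde{\tau}_{2}$. Therefore $\tilde{\alpha}$ lies in one of the four components of $\tilde{S}\setminus(\tilde{\tau}_{1}\cup\tilde{\tau}_{2})$, and both of its ideal endpoints lie in a single one of the four arcs $(e_{1},e_{2}),(e_{2},e_{3}),(e_{3},e_{4}),(e_{4},e_{1})$ of $\partial\tilde{S}\setminus\{e_{1},\dots,e_{4}\}$. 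Each of these four arcs is entirely contained in one of the two components of $\partial\tilde{S}\setminus\{e_{1},e_{2}\}$, so both endpoints of $\tilde{\alpha}$ sit on the same side of $\tilde{\rho}$ and hence $\tilde{\alpha}\cap\tilde{\rho}=\emptyset$. Applying the identical argument to the crossing pair $\gamma\tilde{\tau}_{1},\gamma\tilde{\tau}_{2}$ obtained from a deck transformation $\gamma$ yields $\tilde{\alpha}\cap\gamma\tilde{\rho}=\emptyset$ for every $\gamma$, so $\alpha\cap\rho=\emptyset$ in $S$.

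It remains to verify that $\rho\neq\tau$. When $\tau$ is closed, this is immediate: each lift of $\tau$ is the axis of a hyperbolic element of the torsion-free Fuchsian covering group, and two distinct such axes cannot share an ideal fixed point; since $\tilde{\rho}$ shares the endpoint $e_{1}$ with $\tilde{\tau}_{1}$ but is not equal to it, $\tilde{\rho}$ is not a lift of $\tau$. When $\tau$ is non-closed I would consider the four candidates $\tilde{\rho}_{ij}$ with endpoints $\{e_{i},e_{j}\}$ for $(i,j)\in\{(1,2),(2,3),(3,4),(4,1)\}$, each of which satisfies the disjointness property by the same argument, and show that not all four can simultaneously project to $\tau$: this would force an implausibly symmetric collision among the ideal endpoints of distinct lifts of $\tau$, which can be excluded by analyzing the action of the covering group on $\partial\tilde{S}$.

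The main obstacle will be executing this last step cleanly in the non-closed setting, where lifts of $\tau$ may share ideal endpoints in degenerate configurations (for instance when $\tau$ spirals toward a common boundary geodesic from several branches). By contrast, the disjointness part is a direct consequence of the cyclic-order combinatorics on $\partial\tilde{S}$ and requires essentially no geometric input beyond the convexity of $\tilde{S}$.
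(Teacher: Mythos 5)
Your construction is genuinely different from the paper's: you take $\rho$ to be a side of the ideal quadrilateral spanned by two crossing lifts of $\tau$, whereas the paper extracts from $\tau$ a simple sub-loop freely homotopic to a simple closed geodesic $\gamma$ and takes $\rho$ to be $\gamma$ itself (when $\gamma\subset S$) or an arc running from an ideal endpoint of a lift of $\tau$ into the boundary (when $\gamma\subset\partial R$). Your disjointness argument is correct and arguably cleaner than the paper's: a lift $\tilde{\alpha}$ of $\alpha$ is disjoint from the crossing pair $g\tilde{\tau}_{1},g\tilde{\tau}_{2}$ for every deck transformation $g$, so the linking combinatorics on the boundary circle put both endpoints of $\tilde{\alpha}$ in a single complementary arc of $\{g e_{1},\dots,g e_{4}\}$, and $\tilde{\alpha}$ misses every lift $g\tilde{\rho}$.

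The gap is exactly where you flag it, and it is not a minor loose end: for non-closed $\tau$ you never establish that any of the four candidates $\tilde{\rho}_{ij}$ projects to a geodesic different from $\tau$. Distinct lifts of a non-closed geodesic can share ideal endpoints (for instance, when $\tau$ spirals towards a closed geodesic, the orbit of a lift under the corresponding hyperbolic deck transformation accumulates at a common fixed point), so the fact that $\tilde{\rho}_{12}$ shares an endpoint with $\tilde{\tau}_{1}$ without equalling it does not preclude $\tilde{\rho}_{12}$ from lying in the deck-transformation orbit of $\tilde{\tau}_{1}$; excluding the possibility that all four sides do so requires an actual discreteness argument, and ``implausibly symmetric'' is not one. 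Note also that your $\tilde{\rho}$ is rigidly pinned by two ideal points, so there is no room to perturb it out of trouble. The paper's construction is engineered precisely to dodge this: its $\rho$ is either a \emph{simple closed} geodesic, hence automatically distinct from the self-intersecting $\tau$, or an arc hitting $\partial R$ at a freely adjustable point, giving a one-parameter family of admissible choices of which at most one can coincide with $\tau$. You need either to import that kind of flexibility into your construction or to supply the missing group-theoretic exclusion; as written, the proof is incomplete for non-closed $\tau$.
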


\begin{proof}
\begin{figure}[h]
\noindent \begin{centering}
\includegraphics{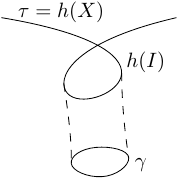}
\par\end{centering}
\caption{\label{fig:self_intersect_circle}A simple self-intersection.}
\end{figure}
As illustrated in Figure \ref{fig:self_intersect_circle}, we assume
that a self-intersection of $\tau$ arises when $\tau$ wraps around
a simple closed geodesic $\gamma$ in $R$. More precisely, Let $h:X\rightarrow S$
be a locally shortest curve such that $h(X)=\tau$, where $X=S^{1}$
if $\tau$ is closed, and otherwise $X=(0,1)$. Then there exists
an interval $I\subset X$ such that $h:I\rightarrow S$ is injective
and $h(I)$ is a simple closed curve freely homotopic to $\gamma$.

Let $A_{\gamma}$ denote the annulus bounded by $h(I)$ and $\gamma$.
Let $\tilde{A}_{\gamma}$ be a connected component of the preimage
of $A_{\gamma}$ under the universal covering map. Then $\tilde{A}_{\gamma}$
is bounded by a lift $\tilde{\gamma}$ of $\gamma$ and a lift $\xi$
of $h(I)$. Let $\tilde{\tau}$ be a lift of $\tau$ that intersects
$\xi$. Notice that $\xi$ and $\tilde{\tau}$ are contained in the
preimage of $\tau$ under the universal covering map.

\begin{figure}[h]
\noindent \begin{centering}
\includegraphics{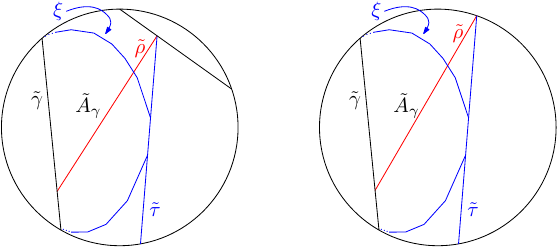}
\par\end{centering}
\caption{\label{fig:self_intersect_not_cofill}Construction of $\tilde{\rho}$.}
\end{figure}

If $\gamma\subset S$, the geodesic $\rho=\gamma$ has the desired
property.

If $\gamma\subset\partial R$, we choose $\tilde{\rho}$ passing through
$\tilde{A}_{\gamma}$ and joining an endpoint of $\tilde{\tau}$ to
$\gamma$; see Figure \ref{fig:self_intersect_not_cofill}. Let $\rho$
be the projection of $\tilde{\rho}$ to $S$. Suppose $\alpha\neq\rho$
is a geodesic in $S$ that intersects $\rho$. Let $\tilde{\alpha}$
be a lift of $\alpha$ that intersects $\tilde{\rho}$, then $\tilde{\alpha}$
must intersect $\xi\cup\tilde{\tau}$. It follows that $\rho$ would
be the desired geodesic if $\rho\neq\tau$. By adjusting the point
at which $\rho$ hits $\gamma$, we can now guarantee that $\rho\neq\tau$,
and the proof is complete.
\end{proof}

\begin{lem}
Let $\tau,\rho$ be two distinct simple geodesics in $S$ such that
$\tau$ spirals towards $\rho$. Then $\tau$ does not strongly cofill
$S$.
\end{lem}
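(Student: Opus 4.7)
The plan is to exhibit $\rho$ itself as a geodesic that witnesses the failure of strong cofilling of $\tau$. Unpacking the definition, it suffices to show that every geodesic $\sigma\neq\rho$ in $S$ that meets $\rho$ must also meet $\tau$; equivalently, every geodesic $\sigma\neq\rho$ disjoint from $\tau$ is also disjoint from $\rho$, which directly contradicts the strong-cofilling condition applied to the choice $\rho$.

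First I would invoke the definition of spiraling to fix a lift $\tilde{\rho}$ of $\rho$ to the universal cover $\tilde{S}$ and a sequence of lifts $\{\tilde{\tau}_{k}\}$ of $\tau$ with $\tilde{\tau}_{k}\to\tilde{\rho}$ in $G(\tilde{S})\cong(S^{1}\times S^{1}-A)/{\sim}$. Given any $\sigma\neq\rho$ with $\sigma\cap\rho\neq\emptyset$, pick $p\in\sigma\cap\rho$, lift it to a point $\tilde{p}\in\tilde{\rho}$, and let $\tilde{\sigma}$ be the lift of $\sigma$ passing through $\tilde{p}$. Two distinct geodesics in the hyperbolic plane meet at most once, and if they share an ideal endpoint they are asymptotic and do not cross in the interior; hence $\tilde{\sigma}$ crosses $\tilde{\rho}$ transversally at the single interior point $\tilde{p}$, and the four ideal endpoints of $\tilde{\sigma}$ and $\tilde{\rho}$ are pairwise distinct and cyclically interleaved on $S^{1}$.

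The conclusion is then a topological observation: since convergence in $G(\tilde{S})$ is exactly convergence of the (unordered) pair of ideal endpoints, for all sufficiently large $k$ the endpoints of $\tilde{\tau}_{k}$ remain distinct from the endpoints of $\tilde{\sigma}$ and stay interleaved with them, so $\tilde{\tau}_{k}$ crosses $\tilde{\sigma}$. Projecting down gives $\tau\cap\sigma\neq\emptyset$, as needed.

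The only delicate point is excluding the degenerate case in which $\tilde{\sigma}$ and $\tilde{\rho}$ happen to share an ideal endpoint, since then the four-endpoint interleaving argument would break down; this is ruled out by the standard hyperbolic fact that two asymptotic geodesics do not cross in the interior, together with $\sigma\neq\rho$. Apart from that, the proof is a direct unwinding of the spiraling definition, and it is worth noting that simplicity of $\tau$ and $\rho$ plays no role in this particular step — only the existence of the sequence $\tilde{\tau}_{k}\to\tilde{\rho}$ is used.
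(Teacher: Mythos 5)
Your proof is correct and takes essentially the same route as the paper: both exhibit $\rho$ itself as the witness, lift a geodesic $\sigma$ meeting $\rho$ so that its lift crosses the lift $\tilde{\rho}$ toward which the $\tilde{\tau}_{k}$ converge, and conclude that $\tilde{\sigma}$ must cross some $\tilde{\tau}_{k}$. Your additional care about shared ideal endpoints and the interleaving of endpoints on $S^{1}$ merely makes explicit what the paper leaves implicit.
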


\begin{proof}
Suppose that $\alpha\neq\rho$ is a geodesic that intersects $\rho$.
Let $\tilde{\alpha},\tilde{\rho}$ be lifts of $\alpha,\rho$ to the
universal covering space, respectively, such that $\tilde{\alpha}$
intersects $\tilde{\rho}$. Since $\tau$ spirals towards $\rho$,
there exists a sequence of lifts $\{\tilde{\tau}_{k}\}_{k=1}^{\infty}$
of $\tau$ that tends to $\tilde{\rho}$. It follows that $\tilde{\alpha}$
must intersect some $\tilde{\tau}_{k}$, hence $\alpha$ intersects
$\tau$. Therefore $\tau$ does not strongly cofill $S$.
\end{proof}

\begin{lem}
\label{lem:G}Let $\tau$ be a simple geodesic in $S$. Suppose that
$\tau$ is an association between boundaries. Then:

(i) $\tau$ strongly cofills $S$;

(ii) $\tau$ is not recurrent;

(iii) $\tau$ is not strongly pre-spiraled.
\end{lem}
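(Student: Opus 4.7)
My plan is, in each of the three parts, to construct an explicit geodesic in $S$ that witnesses the claimed property, using two structural facts: $\tau\in G$ is a single simple geodesic whose ends are all absorbed by $\partial R$ (via hitting or spiraling), and $(g,n)\neq(0,3)$ so $R$ has enough topology to carry essential simple curves crossing $\tau$.

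I will handle (ii) and (iii) with the same simple closed geodesic. Since $\tau$ is an essential simple arc on $R$ and $(g,n)\neq(0,3)$, there exists a simple closed curve $\gamma_{0}$ on $R$ with geometric intersection number $i(\gamma_{0},\tau)\geqslant 1$; let $\alpha$ be its geodesic representative, a simple closed geodesic in $S$ whose transverse intersection with $\tau$ realises $i(\gamma_{0},\tau)$. Since $\alpha$ is compact, $\alpha\cap\tau$ is finite and non-empty, giving (ii). For (iii), assume for contradiction that $\rho$ is a geodesic disjoint from $\tau$ witnessing strongly pre-spiraled; the same $\alpha$ is distinct from $\tau$ and from $\rho$ (because $\alpha\cap\tau\neq\emptyset$ while $\rho\cap\tau=\emptyset$), and meets $\tau$. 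The transverse crossings of $\alpha$ with $\rho$ on the compact set $\alpha$ can only accumulate at points of $\overline{\rho}\setminus\rho$. The key structural remark is that a lift of $\tau\in G$ has its ideal endpoints only on lifts of $\partial R$ (being either a finite arc between such lifts or a bi-infinite geodesic asymptotic to them), so no geodesic can share a free ideal endpoint with $\tau$ and in particular no geodesic can spiral toward $\tau$; hence $\overline{\rho}\setminus\rho$ cannot touch $\tau$, and is a finite union of simple geodesics that $\rho$ spirals toward, all staying away from $\tau$. Choosing $\alpha$ to meet $\tau$ at points avoiding this closure residue of $\rho$ (which is possible since there are many simple closed geodesics crossing $\tau$), we obtain $|\alpha\cap\rho|<\infty$, contradicting the strongly pre-spiraled hypothesis.

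For (i), given any $\rho\neq\tau$, I fix $x\in\rho\setminus\tau$---which is non-empty since two distinct maximal geodesics cannot contain one another---and lift to $\tilde{x}\in\tilde{S}$, not on any lift of $\tau$. Let $D$ be the component of $\tilde{S}\setminus\bigcup_{k}\tilde{\tau}_{k}$ containing $\tilde{x}$; it is an open convex region. Because $\tau\in G$ is a single simple curve, its lifts meet $\partial\tilde{R}\cup\partial\Delta$ at only a discrete set, so the closure of $D$ in $\overline{\Delta}$ contains a non-degenerate arc not adjacent to any $\tilde{\tau}_{k}$. This arc produces an open family of directions at $\tilde{x}$ whose geodesics remain in $D$; taking any such direction different from that of the lift of $\rho$ yields a projected geodesic $\alpha$ disjoint from $\tau$, distinct from $\rho$, and meeting $\rho$ at $x$. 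The chief obstacle throughout the proof is (iii)'s control of $|\alpha\cap\rho|$ for complicated $\rho$, and its resolution rests precisely on the ``no geodesic spirals toward $\tau$'' structural restriction that distinguishes elements of $G$ from simple closed geodesics.
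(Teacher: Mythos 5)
Your part (ii) is fine, but parts (iii) and (i) both have genuine gaps, and (iii) is the fatal one. For (iii) you propose to defeat an alleged witness $\rho$ with a \emph{simple closed} geodesic $\alpha$ crossing $\tau$. This cannot work in general. The paper first shows that $\tau$ is contained in a pair of pants $P\subset R$ whose boundary consists of the boundary components of $R$ associated to $\tau$ together with at least one simple closed geodesic $\gamma\subset S$. Any simple closed geodesic crossing $\tau$ must leave $P$, hence must cross $\gamma$. Now take $\rho$ lying entirely outside $P$ and spiraling towards $\gamma$ from the non-$P$ side: then $\overline{\rho}\supset\gamma$, so \emph{every} geodesic crossing $\gamma$ transversally meets infinitely many strands of $\rho$, and in particular every simple closed geodesic crossing $\tau$ meets $\rho$ infinitely often. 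Your escape clause --- choosing $\alpha$ to avoid $\overline{\rho}\setminus\rho$ --- is unavailable, both because $\alpha$ is forced through $\gamma\subset\overline{\rho}$ and because your structural claim that $\overline{\rho}\setminus\rho$ is a finite union of simple geodesics is unjustified ($\rho$ need not be simple, and its closure can be a minimal lamination or accumulate on closed geodesics such as $\gamma$). The paper's witness is instead the \emph{non-simple} closed geodesic $\lambda$ representing $[\alpha][\beta]^{-1}$, which stays inside $P$, crosses $\tau$, and meets any $\tau$-disjoint $\rho$ at most four times because each crossing of $\lambda$ forces $\rho$ to travel around a boundary component of $R$. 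The containment of $\tau$ in a pair of pants, which you never establish, is the key structural input you are missing.

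In (i) the decisive step also fails as written: from the existence of one non-degenerate free arc in $\partial D$ you conclude that there is ``an open family of directions at $\tilde{x}$ whose geodesics remain in $D$.'' A direction pointing at the free arc only guarantees that the \emph{forward ray} from $\tilde{x}$ stays in $D$; the backward ray of the same complete geodesic may perfectly well cross a lift of $\tau$, in which case the projected geodesic meets $\tau$. What you need is a pair of free boundary arcs that are antipodally matched as seen from $\tilde{x}$, i.e.\ a direction $\theta$ with both $\theta$ and $\theta+\pi$ escaping, and for a fixed $x\in\rho\setminus\tau$ this can fail (the visual arcs subtended by the lifts of $\tau$ can cover every antipodal pair). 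The statement only requires \emph{some} geodesic other than $\rho$, disjoint from $\tau$ and meeting $\rho$ somewhere, so insisting that it pass through a prescribed point is an unnecessary and unproved constraint. The paper avoids all of this by exhibiting an explicit union $\sigma$ of finitely many geodesics disjoint from $\tau$ (decomposing curves, triangulation edges, and two auxiliary geodesics $\rho_{1},\rho_{2}$ in $P$) that meets every geodesic except $\tau$, and then separately ruling out the exceptional case where $\rho$ is itself a component of $\sigma$.
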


\begin{proof}
\begin{figure}[h]
\noindent \begin{centering}
\includegraphics{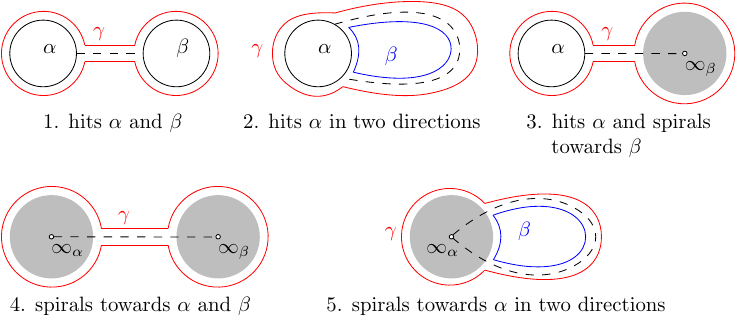}
\par\end{centering}
\caption{\label{fig:geodesic_pants}Every dashed line is $\tau$. The symbol
``$\infty$'' stands for ``spirals towards''.}
\end{figure}

We first show that $\tau$ is contained in a pair of pants $P\subset R$.
All the possibilities of $\tau$ have been listed in Figure \ref{fig:geodesic_pants}.
Here we deal only with the third case, since the other cases are similar.
Suppose that $\tau$ hits $\alpha$ at $p$ and spirals towards $\beta$,
where $\alpha,\beta$ are boundary components of $R$. Let $C_{\beta}$
be the collar around $\beta$ and $C_{0}$ the boundary component
of $C_{\beta}$ that intersects $\tau$. Write $\tau_{0}=\tau-C_{\beta}$
and $q=\tau\cap C_{0}$. Starting at $p$, going around $\alpha$
once in the clockwise direction, then walking along $\tau_{0}$ up
to $q$, next going around $C_{0}$ once in the clockwise direction,
and finally walking along $\tau_{0}$ back to $p$, we obtain a closed
curve freely homotopic to a simple closed geodesic $\gamma$. Then
$\tau$ is contained in the pair of pants $P$ whose boundary components
are $\alpha$, $\beta$ and $\gamma$.

\textbf{Proof of (i):} Without loss of generality, we may assume $\gamma\subset S$.
Extend $P$ to a pants decomposition $\mathcal{P}=\{P_{1}=P,P_{2},\cdots,P_{M}\}$
of $S$ and denote the corresponding system of decomposing curves
by $\{\xi_{1}=\gamma,\xi_{2},\cdots,\xi_{N}\}$. Let the geodesics
$\eta_{i1},\eta_{i2},\eta_{i3}$ be the edges of an ideal triangulation
of $P_{i}$, $i=2,\cdots,M$. We choose two simple geodesics $\rho_{1}$
and $\rho_{2}$ in $P$ as illustrated in Figure \ref{fig:association_strongly_cofill}.
Then 
\[
\sigma=\rho_{1}\cup\rho_{2}\cup\left(\bigcup_{i=1}^{N}\xi_{i}\right)\cup\left(\bigcup_{i=2}^{M}(\eta_{i1}\cup\eta_{i2}\cup\eta_{i3})\right)
\]
intersects every geodesic in $S$ except $\tau$.

\begin{figure}[h]
\noindent \begin{centering}
\includegraphics{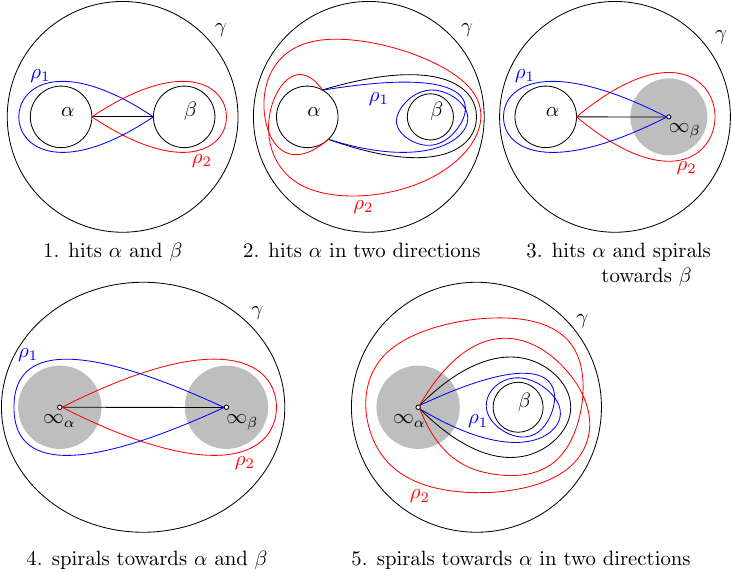}
\par\end{centering}
\caption{\label{fig:association_strongly_cofill}The symbol ``$\infty$'' stands
for ``spirals towards''.}
\end{figure}

Suppose, contrary to the assertion, that there exists a geodesic $\rho\neq\tau$
in $S$ such that every geodesic $\mu\neq\rho$ disjoint from $\tau$
are disjoint from $\rho$. Then $\rho$ intersects $\sigma$. Since
$\sigma$ is disjoint from $\tau$, we see that $\rho$ is contained
in $\sigma$. It is clear that $\rho$ can only be $\rho_{1}$ or
$\rho_{2}$. In cases 1, 3, 4 in Figure \ref{fig:association_strongly_cofill},
$\rho_{1}$ intersects $\rho_{2}$, so $\rho$ is not $\rho_{1}$
or $\rho_{2}$, a contradiction. In cases 2 and 5 in Figure \ref{fig:association_strongly_cofill},
$\rho_{1}$ and $\rho_{2}$ can be replaced by geodesics with more
self-intersections, so $\rho$ is not $\rho_{1}$ or $\rho_{2}$,
a contradiction. Hence $\tau$ must strongly cofill $S$.

\textbf{Proof of (ii):} Because $\tau$ is contained in the pair of
pants $P\subset R$ and $R$ is not a pair of pants, there exists
a simple closed geodesic $\rho$ in $S$ that intersects $\tau$ once
or twice. Thus $\tau$ is not recurrent.

\textbf{Proof of (iii):} In all cases, let $\lambda$ be the geodesic
representative of $[\alpha][\beta]^{-1}$; see Figure \ref{fig:association_not_prespiraled}.
Let $\rho$ be a geodesic in $S$ disjoint from $\tau$.

\begin{figure}[h]
\noindent \begin{centering}
\includegraphics{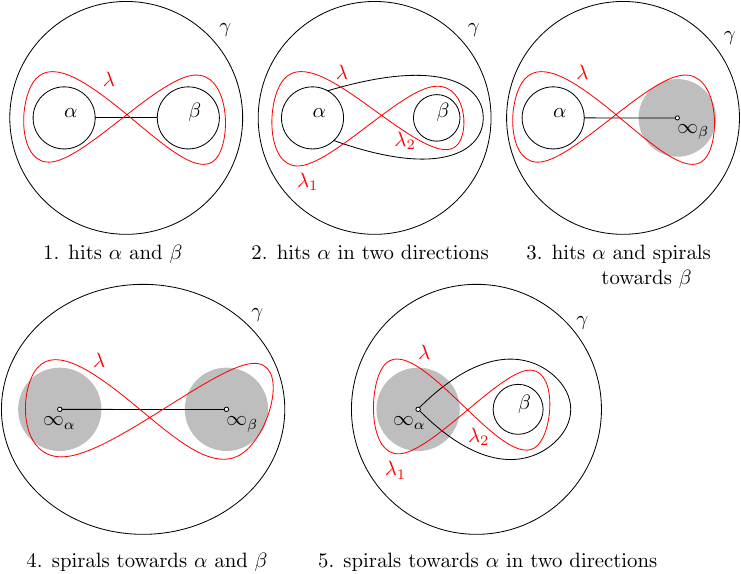}
\par\end{centering}
\caption{\label{fig:association_not_prespiraled}The symbol ``$\infty$'' stands
for ``spirals towards''.}
\end{figure}

In cases 1, 3, 4 in Figure \ref{fig:association_not_prespiraled},
since $\alpha$ and $\beta$ are boundary components of $R$, we see
that every time $\rho$ intersects $\lambda$, one of the following
situations must occur: $\rho$ hits $\alpha$, $\rho$ hits $\beta$,
$\rho$ spirals towards $\alpha$, or $\rho$ spirals towards $\beta$.
It follows that $\rho$ intersects $\lambda$ at most twice.

In cases 2 and 5 in Figure \ref{fig:association_not_prespiraled},
let $\lambda_{1},\lambda_{2}$ be simple closed curves freely homotopic
to $\alpha,\beta$, respectively, such that $\lambda=\lambda_{1}\cup\lambda_{2}$.
Every time $\rho$ intersects $\lambda_{1}$, it also hits $\alpha$
or spirals towards $\alpha$. Every time $\rho$ intersects $\lambda_{2}$,
it also intersects $\lambda_{1}$. It follows that $\rho$ intersects
$\lambda$ at most four times.

Therefore $\tau$ is not strongly pre-spiraled.
\end{proof}

\begin{lem}
Let $\tau$ be a simple closed geodesic in $S$. Then:

(i) $\tau$ strongly cofills $S$;

(ii) $\tau$ is not recurrent;

(iii) $\tau$ is strongly pre-spiraled.
\end{lem}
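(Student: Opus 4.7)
The plan is to prove (iii) first so that its spiral construction can be reused for the hardest case of (i).

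For (iii), I would take $\rho$ to be a simple geodesic in $S$ with one end spiraling towards $\tau$; such a $\rho$ exists by fixing a pants decomposition of $R$ containing $\tau$ and choosing, inside a pair of pants $P$ adjacent to $\tau$, a simple geodesic with one end on a different boundary component of $P$ and the other end spiraling around $\tau$. Clearly $\rho\cap\tau=\emptyset$. To show that every $\mu\neq\tau$ meeting $\tau$ meets $\rho$ infinitely often, I would lift to the universal cover: pick a lift $\tilde{\mu}$ crossing a lift $\tilde{\tau}$ transversely, let $g$ be the hyperbolic deck transformation with axis $\tilde{\tau}$, and pick a lift $\tilde{\rho}$ having one ideal endpoint at a fixed point of $g$. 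The iterates $g^{n}\tilde{\rho}$ are pairwise distinct lifts of $\rho$ whose other ideal endpoint is pushed towards the remaining fixed point of $g$, so $g^{n}\tilde{\rho}\to\tilde{\tau}$ in $G(\tilde{X})$ as $n$ tends to infinity in the appropriate direction. Because transversal crossing of $\tilde{\mu}$ is an open condition on $G(\tilde{X})$ and $\tilde{\tau}$ crosses $\tilde{\mu}$, all but finitely many $g^{n}\tilde{\rho}$ also cross $\tilde{\mu}$, producing infinitely many intersections of $\mu$ with $\rho$ in $S$.

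For (ii), it is enough to exhibit a simple closed geodesic in $S$ that meets $\tau$, since then both geodesics, being closed of finite length, intersect in only finitely many points. Such a geodesic exists because $(g,n)\neq(0,3)$: a flip move applied to a pants decomposition containing $\tau$ produces a simple closed curve crossing $\tau$, and its geodesic representative is a simple closed geodesic in $S$ that crosses $\tau$ transversely.

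For (i), I would split on whether $\rho$ meets $\tau$. If $\rho\cap\tau\neq\emptyset$, the geodesic from (iii) serves as $\sigma$: it is disjoint from $\tau$ (being a spiral), distinct from $\rho$ (which crosses $\tau$), and meets $\rho$ by the universal-cover argument above. If $\rho\cap\tau=\emptyset$, then $\rho$ lies in a single connected component $U$ of $R\setminus\tau$, and I would find $\sigma$ inside $U$, automatically disjoint from $\tau$. When $U$ has positive complexity, a pants curve of a pants decomposition of $U$, or a seam of one of its pants, crossing $\rho$ works. The delicate case, and the main obstacle I anticipate, is when $U$ is itself a pair of pants, which happens for example when $R$ is a once-holed torus with $\tau$ non-separating; there I expect to enumerate the possible topological types of $\rho$---boundary curve of $U$ distinct from $\tau$, simple arc between boundary components, simple arc spiraling to a boundary of $U$, or self-intersecting geodesic---and in each case exhibit an explicit seam, arc, or spiral of $U$ meeting $\rho$, reusing the linking idea of (iii) whenever $\rho$ itself spirals towards a boundary of $U$.
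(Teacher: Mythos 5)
Your parts (ii) and (iii) are correct and essentially coincide with the paper's: for (ii) the paper likewise picks a closed geodesic crossing $\tau$ and uses finiteness of intersections of two closed geodesics, and for (iii) it simply invokes a geodesic disjoint from $\tau$ that spirals towards $\tau$; your deck-transformation argument with $g^{n}\tilde{\rho}\to\tilde{\tau}$ is exactly the (unwritten) justification, and it is also the right tool. Where you genuinely diverge is (i). The paper does not case-split on whether $\rho$ meets $\tau$: it extends $\tau$ to a full system of decomposing curves $\{\xi_{1}=\tau,\xi_{2},\dots,\xi_{N}\}$, takes ideal triangulations of \emph{all} the complementary pairs of pants, and observes that the union $\sigma$ of the $\xi_{i}$ ($i\geqslant2$) and all triangulation edges is disjoint from $\tau$ and meets every geodesic except $\tau$; the only residual issue is when $\rho$ is itself one of the pieces of $\sigma$, which is handled by varying the decomposition. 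That global construction buys you exactly the thing you flag as "the main obstacle": it never requires a complete enumeration of the geodesics living in a pair-of-pants component of $S-\tau$, because the triangulation edges are already placed in every such component. Your route is workable but the pair-of-pants case is only a plan, not a proof, and as stated the enumeration has two slips: a boundary curve of $U$ other than $\tau$ is a component of $\partial R$ and hence not a geodesic of $S$ at all (that case is vacuous), while you omit non-simple closed geodesics in $U$ (e.g.\ figure-eight types) and geodesics of $U$ spiraling towards $\tau$ itself from inside $U$ in both directions; for the latter you must also choose your auxiliary spiral with the correct orientation, since two geodesics spiraling towards $\tau$ in the same sense can be disjoint. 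I would either carry out that enumeration carefully or switch to the paper's single filling system $\sigma$, which reduces (i) to the one clean residual case $\rho\subset\sigma$.
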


\begin{proof}
(i) Extend $\tau$ to a system of decomposing curves $\{\xi_{1}=\tau,\xi_{2},\cdots,\xi_{N}\}$
and denote the corresponding pants decomposition by $\{P_{1},\cdots,P_{M}\}$.
Let the geodesics $\eta_{i1},\eta_{i2},\eta_{i3}$ be the edges of
an ideal triangulation of $P_{i}$, $i=1,\cdots,M$. Then
\[
\sigma=\left(\bigcup_{i=2}^{N}\xi_{i}\right)\cup\left(\bigcup_{i=1}^{M}(\eta_{i1}\cup\eta_{i2}\cup\eta_{i3})\right)
\]
intersects every geodesic in $S$ except $\tau$. Extending $\tau$
to different systems of decomposing curves, we can see that each geodesic
contained in $\sigma$ intersects some other geodesic disjoint from
$\tau$. It follows that $\tau$ strongly cofills $S$.

(ii) Choose $\rho\neq\tau$ a closed geodesic intersecting $\tau$.
Then $\rho$ intersects $\tau$ at most finitely many times. Hence
$\tau$ is not recurrent.

(iii) There exists a geodesic $\rho$ that is disjoint from $\tau$
and spirals towards $\tau$, so $\tau$ is strongly pre-spiraled.
\end{proof}

\begin{lem}
Let $\tau$ be a simple geodesic in $S$ that is not self-isolated.
Then $\tau$ is recurrent.
\end{lem}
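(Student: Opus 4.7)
The plan is to pass to the universal cover $\tilde{S}$ of $S$ and use the accumulation of $\tau$ on itself to force any transverse geodesic to meet $\tau$ infinitely often. First I would exploit the failure of self-isolation at some $x\in\tau$ to produce a sequence of distinct lifts of $\tau$ converging to the lift through $\tilde{x}$. Concretely, from the negation of the definition of self-isolated there is a sequence $x_k\to x$ of points of $\tau$ that do not lie on the local branch of $\tau$ through $x$. Lifting, each $x_k$ lifts to a point $\tilde{x}_k\to\tilde{x}$ lying on a lift $\tilde{\tau}_k$ distinct from the lift $\tilde{\tau}$ through $\tilde{x}$. Since $\tau$ is simple, all lifts of $\tau$ are pairwise disjoint complete geodesics in the Klein model. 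After passing to a subsequence, $\tilde{\tau}_k$ converges in the compact space of geodesics of $\tilde{S}$ to some geodesic $\tilde{\tau}'$ passing through $\tilde{x}$.

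The key observation is that disjointness forces $\tilde{\tau}'=\tilde{\tau}$: otherwise $\tilde{\tau}'$ and $\tilde{\tau}$ would be two distinct geodesics meeting at $\tilde{x}$, hence crossing transversally there, and then $\tilde{\tau}_k$ would have to meet $\tilde{\tau}$ for large $k$, contradicting pairwise disjointness of the lifts. Therefore $\tilde{\tau}_k\to\tilde{\tau}$ in the space of geodesics, i.e.\ their endpoints at infinity converge, which gives uniform convergence on every compact subset of $\tilde{S}$.

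Next let $\alpha$ be any geodesic in $S$ intersecting $\tau$ at a point $y$; two distinct geodesics in a hyperbolic surface meet transversally, so the intersection is transverse. Pick a lift $\tilde{y}\in\tilde{\tau}$ of $y$ and let $\tilde{\alpha}$ be the lift of $\alpha$ through $\tilde{y}$. By uniform convergence of $\tilde{\tau}_k$ to $\tilde{\tau}$ on a compact neighborhood of $\tilde{y}$, the geodesic $\tilde{\alpha}$ crosses $\tilde{\tau}_k$ transversally at a point $\tilde{z}_k$ with $\tilde{z}_k\to\tilde{y}$ for all sufficiently large $k$. Projecting to $S$, the points $z_k=\pi(\tilde{z}_k)$ all lie in $\alpha\cap\tau$.

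Finally I would argue that infinitely many of the $z_k$ are distinct. If a subsequence collapsed to a single point $z\in S$, the corresponding $\tilde{z}_k$ would all lie in one deck-group orbit and cluster at $\tilde{y}$, contradicting proper discontinuity of the deck action on a compact neighborhood of $\tilde{y}$. Hence $\alpha\cap\tau$ is infinite, proving that $\tau$ is recurrent. The main obstacle is the step identifying the limit $\tilde{\tau}'$ with $\tilde{\tau}$, since this is precisely where the hypothesis that $\tau$ is simple (forcing disjoint lifts) is needed; the remainder is a routine combination of convergence-on-compacts and proper discontinuity.
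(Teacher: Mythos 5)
Your proof is correct and follows essentially the same route as the paper: non-self-isolation yields a sequence of distinct, pairwise disjoint lifts accumulating on a fixed lift $\tilde{\tau}_0$, a lift of any intersecting geodesic then crosses infinitely many of them near the original intersection, and these crossings project to infinitely many distinct points of $\rho\cap\tau$ because the covering map is locally injective. You merely fill in a step the paper leaves implicit (identifying the limit of the $\tilde{\tau}_k$ with the lift through $\tilde{x}$ via disjointness of lifts of a simple geodesic), which is a worthwhile addition but not a different argument.
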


\begin{proof}
The preimage of $\tau$ under the universal covering map is the union
of a sequence of disjoint geodesics in the hyperbolic plane $\Delta$.
Since $\tau$ is not self-isolated, we see that there exists a sequence
of lifts $\{\tilde{\tau}_{k}\}_{k=0}^{\infty}$ of $\tau$ such that
$\tilde{\tau}_{k}$ tends to $\tilde{\tau}_{0}$ as $k\rightarrow\infty$.

Suppose that $\rho$ is a geodesic in $S$ that intersects $\tau$.
Let $\tilde{\rho}$ be a lift of $\rho$ that intersects $\tilde{\tau}_{0}$.
Then $\tilde{\rho}$ intersects $\tilde{\tau}_{k}$ for all $k$ sufficiently
large. Moreover, the intersection of $\tilde{\rho}$ with $\tilde{\tau}_{k}$
must tend to the intersection of $\tilde{\rho}$ with $\tilde{\tau}_{0}$
as $k\rightarrow\infty$. Since every covering map is a local homeomorphism,
it follows that $\rho$ intersects $\tau$ infinitely many times.
\end{proof}

\section{\label{sec:Topological-type}Topological type of bi-geodesic mapping}

\subsection{A classification of associations}

Recall that $G$ is the set of simple geodesics in $S$ that are associations
between boundaries, and $G=G(p)\cup G(\infty)\cup G(p,\infty)$.
\begin{lem}
\label{lem:G_infty}Suppose that $\tau\in G$. Then $\tau\in G(\infty)$
if and only if there exists a finite subset $\{\tau_{1},\cdots,\tau_{k}\}\subset G$
such that $\tau,\tau_{1},\cdots,\tau_{k}$ are disjoint geodesics
whose union intersects every geodesic in $S$.
\end{lem}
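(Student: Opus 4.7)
The plan is to prove both implications of the equivalence.

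\textbf{Forward implication.} Given $\tau\in G(\infty)$, I would extend $\tau$ to a maximal disjoint collection of simple geodesics in $G$ that fills $S$. The starting point is that $\tau$, spiraling at both ends, descends to an essential arc in the punctured surface $\hat{R}$ obtained by collapsing each boundary component of $R$ to a puncture. I would choose an ideal triangulation of $\hat{R}$ containing $\tau$ as an edge, then realize each edge by a geodesic in $R$ lying in $G$, choosing the realizations (mixing $G(\infty)$ and possibly $G(p)$ or $G(p,\infty)$ types) so that all resulting arcs are pairwise disjoint and the complementary regions in $R$ contain no complete geodesic of $S$. The filling property then follows from the fact that each complementary region is a topological disk bounded by geodesic arcs and at most a controlled configuration of arcs of $\partial R$, in which no complete geodesic of $S$ can be entirely contained.

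\textbf{Backward implication.} Assume $\{\tau,\tau_{1},\ldots,\tau_{k}\}\subset G$ is a disjoint family whose union meets every geodesic in $S$, but $\tau\notin G(\infty)$. Then $\tau$ has a finite endpoint $p$ on some boundary component $\alpha$, and I would construct a simple geodesic $\rho\in G$ disjoint from every family member, contradicting the filling property. In the universal cover, let $\tilde{\tau}$ be a lift of $\tau$ ending at $\tilde{p}$ in the interior of a lift $\tilde{\alpha}$ of $\alpha$. I look for $\rho$ whose lift $\tilde{\rho}$ has one endpoint on $\tilde{\alpha}$ near $\tilde{p}$ and another on a suitable nearby lift of a boundary component, placed on one side of $\tilde{\tau}$. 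The disjointness of the $\tau_{i}$'s from $\tau$ restricts how their lifts can accumulate near $\tilde{p}$; by arranging $\tilde{\rho}$ to avoid the projections of the finitely many finite endpoints of the $\tau_{i}$'s on $\partial R$ near $p$, I conclude that $\rho$ is disjoint from every family member.

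\textbf{Main obstacle.} The principal difficulty is in the backward direction: spiraling $\tau_{i}$'s have lifts accumulating densely near lifts of $\alpha$ in the cover, so one must carefully exploit their disjointness with $\tilde{\tau}$ to identify a region on the appropriate side of $\tilde{\tau}$ near $\tilde{p}$ into which no lift of any $\tau_{i}$ enters, in which to place $\tilde{\rho}$ as a geodesic arc representing an element of $G$ disjoint from the entire family.
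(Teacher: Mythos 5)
Your overall strategy matches the paper's: realize an ideal triangulation for the forward direction, and for the converse produce a geodesic hitting $\partial R$ that misses the whole family. The forward direction is essentially correct, though your plan to ``mix'' types of realizations is both unnecessary and delicate: an edge realized in $G(p)$ hitting a boundary component $\alpha$ cannot be disjoint from an edge realized in $G(\infty)$ spiraling towards $\alpha$, so the clean choice (and the paper's) is to realize \emph{every} edge in $G(\infty)$, with consistent spiraling orientation at each boundary component; the complementary regions are then ideal triangles, which contain no complete geodesic.

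The backward direction, however, has a genuine gap at exactly the point you flag as the ``main obstacle,'' and your proposed workaround would not close it. If some $\tau_{i}$ spirals towards $\alpha$, its strands cross every collar neighborhood of $\alpha$ infinitely many times, so \emph{no} geodesic with an endpoint on $\alpha$ --- in particular no candidate $\rho$ anchored near $p$ --- can avoid it; there is no ``region on the appropriate side of $\tilde{\tau}$ near $\tilde{p}$'' free of lifts of a spiraling $\tau_{i}$, because those lifts accumulate on $\tilde{\alpha}$ itself and separate $\tilde{p}$ from the rest of the disk. The missing observation, which is the heart of the paper's proof, is that this situation simply cannot occur: a geodesic spiraling towards $\alpha$ must intersect every geodesic that hits $\alpha$ (in the cover, the iterates $\gamma^{k}(\tilde{\tau}_{i})$ under the deck transformation with axis $\tilde{\alpha}$ converge to $\tilde{\alpha}$ and hence cross $\tilde{\tau}$, which terminates at $\tilde{p}\in\tilde{\alpha}$). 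Since each $\tau_{i}$ is disjoint from $\tau$ and $\tau$ hits $\alpha$, \emph{no} $\tau_{i}$ spirals towards $\alpha$; each either hits $\alpha$ in at most two points or stays outside a collar of $\alpha$. Consequently $\alpha$ is cut into finitely many free arcs, and one can launch a new geodesic $\tau_{k+1}\in G$ from such a free arc, disjoint from the entire family --- contradicting the filling hypothesis. With this observation inserted, your argument becomes the paper's; without it, the construction of $\rho$ does not go through.
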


\begin{proof}
If $\tau\in G(\infty)$, by extending $\tau$ to the edges $\{\tau,\tau_{1},\cdots,\tau_{k}\}\subset G(\infty)$
of an ideal triangulation of $S$ we obtain the desired subset.

To prove the converse, we only need to show that there is no such
subset for any $\tau\in G(p)\cup G(p,\infty)$. Given $\tau\in G(p)\cup G(p,\infty)$,
let $\{\tau_{1},\cdots,\tau_{k}\}$ be an arbitrary finite subset
of $G$ such that $\tau,\tau_{1},\cdots,\tau_{k}$ are disjoint geodesics.
Assume that $\tau$ hits a boundary component $\alpha$. Then each
$\tau_{i}$ does not spiral towards $\alpha$. Thus $\alpha$ breaks
up into finitely many components when we cut $R$ along the geodesics
$\tau,\tau_{1},\cdots,\tau_{k}$. Hence there exists a geodesic $\tau_{k+1}\in G$
hitting $\alpha$ such that $\tau,\tau_{1},\cdots,\tau_{k+1}$ are
disjoint geodesics.
\end{proof}

In the case where $R$ has only one boundary component, we have $G=G(p)\cup G(\infty)$,
and the classification is done. For the other cases we have:
\begin{prop}
\label{prop:G_p_infty}Suppose that $R$ is a compact hyperbolic surface
of genus $g$ with $n\geqslant2$ geodesic boundary components. Let
$\tau\in G(p)\cup G(p,\infty)$ and $k=6g+3n-7$. Then $\tau\in G(p,\infty)$
if and only if there exists $k$ geodesics $\tau_{1},\cdots,\tau_{k}\in G(\infty)$
such that $\tau,\tau_{1},\cdots,\tau_{k}$ are disjoint geodesics.
\end{prop}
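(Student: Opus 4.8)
The plan is to prove both implications, with a single geometric fact as the engine:

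\medskip

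\noindent\textbf{Key Lemma.} If a geodesic $\tau$ hits a boundary component $\delta$ and a geodesic $\mu$ spirals towards $\delta$, then $\tau$ and $\mu$ intersect (in fact infinitely often).

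\medskip

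\noindent First I would prove the Key Lemma in the universal cover, as a boundary analogue of Lemma \ref{lem:collar}. Lift $\delta$ to a geodesic $\tilde{\delta}$ whose ideal endpoints are the fixed points of the deck transformation $T$ of $\delta$; after normalizing, put $\tilde{\delta}$ on the imaginary axis with $T(z)=\lambda z$, $\lambda>1$. A lift $\tilde{\mu}$ of a geodesic spiraling towards $\delta$ ends at the attracting fixed point, so it may be taken to be a vertical line $\mathrm{Re}\,z=b>0$, and its $\langle T\rangle$-orbit consists of the vertical lines $\mathrm{Re}\,z=\lambda^{m}b$, which accumulate on $\tilde{\delta}$ as $m\to-\infty$. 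A lift $\tilde{\tau}$ of $\tau$ ends at an interior point of $\tilde{\delta}$, approaching it from the side carrying the copy of $S$, so its real part decreases to $0$ and it therefore crosses all but finitely many of these vertical lines. Projecting gives infinitely many intersections.

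For the backward direction, suppose $\tau\in G(p)$ and, for contradiction, that there are disjoint geodesics $\tau,\tau_{1},\dots,\tau_{k}$ with each $\tau_{i}\in G(\infty)$ and $k=6g+3n-7$. Regarding each boundary component as an ideal vertex, these $k+1=6g+3n-6$ geodesics become disjoint, pairwise non-isotopic essential arcs. Since by Proposition \ref{prop:pants_decomposition} the surface has area $(4g+2n-4)\pi$, a maximal disjoint arc system cuts $S$ into $4g+2n-4$ ideal triangles and has exactly $6g+3n-6$ edges; hence our family is an \emph{ideal triangulation} whose only non-spiraling edge is $\tau$. Now the Key Lemma applies: every boundary component that $\tau$ hits receives no end of any $\tau_{i}$ (otherwise that $\tau_{i}$ would cross $\tau$), so the only arc-ends at such a puncture come from $\tau$. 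Because each puncture meets a triangle corner and each corner is flanked by two arc-ends, every triangle at a hit-puncture must be \emph{self-folded} with $\tau$ as its folded side, which forces the interior edge of that self-folded triangle to have an end at the opposite (base) puncture. Running this at both ends of $\tau$ is contradictory: if $\tau$ hits distinct components $\alpha,\alpha'$, then enclosing $\alpha$ forces a second, non-$\tau$ arc-end at $\alpha'$; and if $\tau$ hits a single component $\alpha$ twice, the two corners at $\alpha$ force a self-folded triangle whose interior edge supplies a forbidden third end at $\alpha$. Either way no such family exists.

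For the forward direction, let $\tau\in G(p,\infty)$ hit $\alpha$ and spiral towards $\beta$; here I would build the family directly. By Lemma \ref{lem:G}, $\tau$ lies in a pair of pants $P$ having $\alpha,\beta$ among its cuffs, and one can choose $\ell\in G(\infty)$ spiraling towards $\beta$ at both ends and separating $\alpha$ from the rest of $P$; then $\ell$ is disjoint from $\tau$ (both spiral into $\beta$ in nested fashion, and $\ell$ avoids $\alpha$), and $\{\ell,\tau\}$ realizes a self-folded triangle enclosing $\alpha$. Completing $\ell$ to an ideal triangulation of $S$ by spiraling geodesics then produces $6g+3n-6$ edges exactly one of which is $\tau$, so the remaining $k=6g+3n-7$ lie in $G(\infty)$ and are disjoint from $\tau$. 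The main obstacle is the backward direction, namely upgrading the geometric Key Lemma into the rigid combinatorial statement that a hit-puncture can only sit inside a self-folded triangle and verifying that this obstruction fires in \emph{every} configuration allowed by $G(p)$; by comparison the Key Lemma and the forward construction are routine once the self-folded picture is in place.
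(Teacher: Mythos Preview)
Your approach is correct and takes a genuinely different route from the paper. The paper argues both directions by area (Gauss--Bonnet): for $\tau\in G(p,\infty)$ it extends $\tau$ to a maximal disjoint family with $\tau_i\in G(\infty)$, observes that the complement consists of ideal triangles together with one quadrilateral having two ideal vertices, and solves $(4g+2n-4)\pi=\frac{2j-1}{3}\pi+\pi$ to get $j=k$; for $\tau\in G(p)$ it notes (using essentially your Key Lemma) that no $\tau_i$ enters the pair of pants $P\supset\tau$, so the complement of a maximal family contains all of $P$ and the area inequality $(4g+2n-4)\pi\geqslant\frac{2j-1}{3}\pi+2\pi$ forces $j<k$. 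You instead pass to the punctured model and use that $6g+3n-6$ disjoint, pairwise non-isotopic arcs must form an ideal triangulation, then derive a combinatorial obstruction from the Key Lemma (a hit-puncture can only see ends of $\tau$, which is incompatible with the local structure of any triangulation). This buys you a clean, metric-free backward direction; the paper's area count is shorter but less transparent about \emph{why} the obstruction appears.

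Two places to tighten. First, in the backward direction with $\tau$ a loop at a single boundary $\alpha$, your one-line dismissal needs another sentence: with exactly two arc-ends at $\alpha$, both from $\tau$, each face at $\alpha$ has both edges at that corner equal to $\tau$, hence is self-folded along $\tau$; but the noose of such a self-fold is a loop based at the \emph{other} end of the radius, which here is again $\alpha$, so the noose supplies forbidden extra ends at $\alpha$. Second, in the forward direction you assert without proof that the $G(\infty)$ noose $\ell$ and the remaining triangulation edges can be realized \emph{geodesically} disjoint from $\tau$. Lemma~\ref{lem:AC_realization} only handles pairs in $G_{+\infty}$, not a pair with $\tau\in G(p,\infty)$; you need one more short lifting argument (in the spirit of your Key Lemma) to check that a $G(\infty)$ geodesic topologically disjoint from $\tau$, spiraling towards $\beta$ in the same sense as $\tau$ and not towards $\alpha$, is actually disjoint from $\tau$ in $S$.
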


\begin{proof}
First assume $\tau\in G(p,\infty)$. Let $\alpha,\beta$ be two boundary
components of $R$ such that $\tau$ hits $\alpha$ and spirals towards
$\beta$. Choose $\tau_{1},\cdots,\tau_{j}\in G(\infty)$ such that
$\tau,\tau_{1},\cdots,\tau_{j}$ are disjoint geodesics whose union
intersects every element of $G(\infty)$; see Figure \ref{fig:join}.
By cutting $S$ along $\tau,\tau_{1},\cdots,\tau_{j}$ we obtain $l=(2j-1)/3$
ideal triangles and a quadrilateral $Q$ with two ideal vertices.
Since
\[
(4g+2n-4)\pi=\mathrm{Area}(S)=l\pi+\mathrm{Area}(Q)=\frac{2j-1}{3}\pi+[(4-2)\pi-\pi],
\]
we have $j=6g+3n-7=k$.

\begin{figure}[h]
\noindent \begin{centering}
\includegraphics{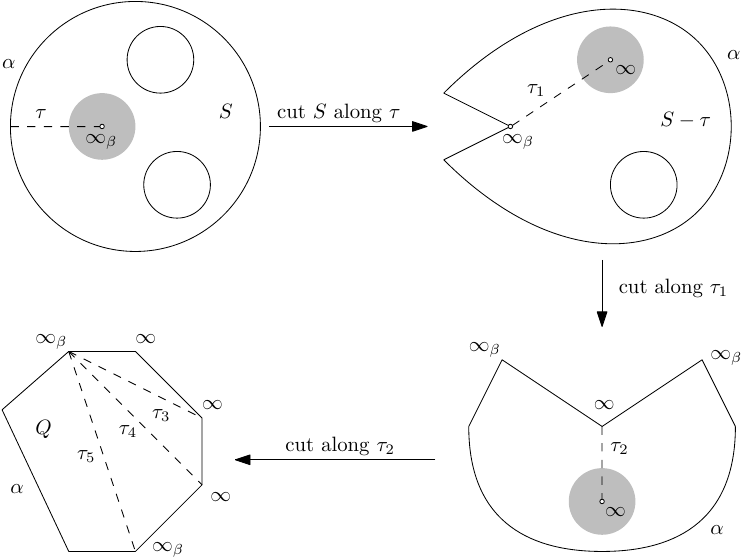}
\par\end{centering}
\caption{\label{fig:join}Here $(g,n)=(0,4)$ and $k=6\times0+3\times4-7=5$.}
\end{figure}

Conversely, assuming that $\tau\in G(p)$. Without loss of generality,
we may assume that $\tau$ hits two boundary components $\alpha$
and $\beta$. Let $\tau_{1},\cdots,\tau_{j}\in G(\infty)$ such that
$\tau,\tau_{1},\cdots,\tau_{j}$ are disjoint geodesics. We assume
that $j$ is maximal. Then $\tau\cup\tau_{1}\cup\cdots\cup\tau_{j}$
intersects every element of $G(\infty)$.

If $n=2$, then $g\geqslant1$, hence $j=0<6g-1=k$.

\begin{figure}[h]
\noindent \begin{centering}
\includegraphics{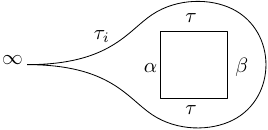}
\par\end{centering}
\caption{\label{fig:join2}The annulus.}
\end{figure}

Suppose that $n\geqslant3$. By cutting $S$ along $\tau,\tau_{1},\cdots,\tau_{j}$
we obtain $l=(2j-1)/3$ ideal triangles and an annulus; see Figure
\ref{fig:join2}. Let $P\subset R$ be the pair of pants that contains
$\tau$. Since none of $\tau_{1},\cdots,\tau_{j}$ enters $P$, we
have
\[
(4g+2n-4)\pi=\mathrm{Area}(R)\geqslant l\pi+\mathrm{Area}(P)=\frac{2j-1}{3}\pi+2\pi,
\]
hence $j\leqslant6g+3n-8.5<6g+3n-7=k$.

Therefore, if $\tau\notin G(p,\infty)$, we cannot find $k$ geodesics
$\tau_{1},\cdots,\tau_{k}\in G(\infty)$ such that $\tau,\tau_{1},\cdots,\tau_{k}$
are disjoint geodesics.
\end{proof}

\subsection{A realization of arc and curve complex}

Let $\alpha^{+}$ be a boundary component $\alpha$ equipped with
an orientation. Suppose that $\tau\in G(\infty)$ spirals towards
$\alpha$. We say $\tau$ \emph{spirals towards} $\alpha^{+}$ if
there exist a lift $\tilde{\alpha}^{+}$ of $\alpha^{+}$ and a lift
$\tilde{\tau}$ of $\tau$ such that the common ideal endpoint of
$\tilde{\alpha}^{+}$ and $\tilde{\tau}$ is the final point of $\tilde{\alpha}^{+}$;
see Figure \ref{fig:orientation}.

\begin{figure}[h]
\noindent \begin{centering}
\includegraphics{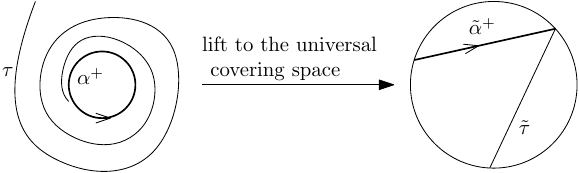}
\par\end{centering}
\caption{\label{fig:orientation}A geodesic spirals towards a simple closed
geodesic.}
\end{figure}

\begin{lem}
\label{lem:spiral_opposite}Suppose that $\tau_{1},\tau_{2}\in G(\infty)$
spiral towards the same boundary component $\alpha$ equipped with
opposite orientations. Then $\tau_{1}$ intersects $\tau_{2}$ infinitely
many times.
\end{lem}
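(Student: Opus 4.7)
The plan is to pass to the universal cover $\tilde{S}$ and exploit the fact that $\tilde{S}$ lies on one side of any lift of the boundary geodesic $\alpha$, which reduces the intersection count to a short cyclic-order computation. First, I would choose lifts carefully: pick a lift $\tilde{\alpha}$ of $\alpha$ with ideal endpoints labelled so that $\tilde{\alpha}^{+}$ goes from $a^{-}$ (initial) to $a^{+}$ (final). By the definition of spiraling toward $\alpha^{+}$ (resp. $\alpha^{-}$), after applying deck transformations to normalize everything to this single lift $\tilde{\alpha}$, I obtain a lift $\tilde{\tau}_{1}$ of $\tau_{1}$ with one ideal endpoint at $a^{+}$ and a lift $\tilde{\tau}_{2}$ of $\tau_{2}$ with one ideal endpoint at $a^{-}$. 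Let $b_{1},b_{2}$ be the remaining endpoints; since $\tilde{\tau}_{i}\neq\tilde{\alpha}$, neither $b_{i}$ coincides with $a^{-}$ or $a^{+}$. Because $\alpha$ is a boundary component of $R$, the universal cover $\tilde{S}$ is a convex region lying on a single side of $\tilde{\alpha}$, so $b_{1}$ and $b_{2}$ must lie on the same open arc of the boundary circle determined by $a^{-}$ and $a^{+}$.

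Next, let $\gamma$ be the hyperbolic deck transformation whose axis is $\tilde{\alpha}$ and whose attracting fixed point is $a^{+}$, i.e., the boundary holonomy matching the orientation $\alpha^{+}$. Each $\gamma^{n}\tilde{\tau}_{1}$ is a lift of $\tau_{1}$ with ideal endpoints $a^{+}$ and $\gamma^{n}b_{1}$. As $n\to-\infty$, $\gamma^{n}b_{1}\to a^{-}$ along the arc, so for all sufficiently negative $n$ the four points appear in the cyclic order $a^{-},\gamma^{n}b_{1},b_{2},a^{+}$ on the boundary circle. The endpoint pairs of $\gamma^{n}\tilde{\tau}_{1}$ and $\tilde{\tau}_{2}$ therefore interleave, and these two chords in the Klein model cross at a point of $\tilde{S}$.

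For distinct sufficiently negative $n$, these crossings produce distinct points on the fixed lift $\tilde{\tau}_{2}$. Since $\tau_{2}$ is a non-closed simple geodesic, its stabilizer in the deck group is trivial and the covering map restricts to a bijection $\tilde{\tau}_{2}\to\tau_{2}$; hence these infinitely many points project to infinitely many distinct intersection points of $\tau_{1}$ and $\tau_{2}$ in $S$. The two items that need careful justification are the initial normalization bringing both $\tilde{\tau}_{i}$ to share the same $\tilde{\alpha}$ (a short deck-transformation argument from the definition of spiraling) and the observation that $\tilde{S}$ sits on one side of $\tilde{\alpha}$, which is where the hypothesis that $\alpha$ is a boundary component, rather than an interior simple closed geodesic, is essential.
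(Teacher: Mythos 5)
Your proposal is correct and follows essentially the same route as the paper: pick lifts $\tilde{\tau}_1,\tilde{\tau}_2$ attached to opposite ideal endpoints of a common lift $\tilde{\alpha}$, then iterate the deck transformation with axis $\tilde{\alpha}$ to produce infinitely many lifts of $\tau_1$ whose free endpoints accumulate at the endpoint of $\tilde{\tau}_2$, forcing infinitely many crossings. Your write-up supplies details the paper leaves to its figure (the one-sidedness of $\tilde{S}$ along $\tilde{\alpha}$, the interleaving of endpoint pairs, and the injectivity of the projection of $\tilde{\tau}_2$), but the underlying argument is the same.
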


\begin{proof}
Equip $\alpha$ with an appropriate orientation. There exist lifts
$\tilde{\alpha},\tilde{\tau}_{1},\tilde{\tau}_{2}$ of $\alpha,\tau_{1},\tau_{2}$,
respectively, such that the initial point of $\tilde{\alpha}$ is
an endpoint of $\tilde{\tau}_{1}$ and the final point of $\tilde{\alpha}$
is an endpoint of $\tilde{\tau}_{2}$; see Figure \ref{fig:spiral_boundary}.

\begin{figure}[h]
\noindent \begin{centering}
\includegraphics{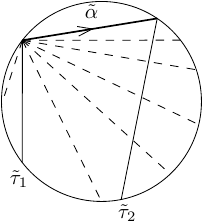}
\par\end{centering}
\caption{\label{fig:spiral_boundary}Iterations of $\tilde{\tau}_{1}$.}
\end{figure}
Consider a universal covering transformation $\gamma$ whose axis
is $\tilde{\alpha}$, then $\{\gamma^{k}(\tilde{\tau}_{1}):k\in\mathbb{Z}\}$
is a sequence of geodesics ending at the initial point of $\tilde{\alpha}$;
see the dashed lines in Figure \ref{fig:spiral_boundary}. It follows
that $\tau_{1}$ intersects $\tau_{2}$ infinitely many times.
\end{proof}

Throughout the rest of this section we denote the boundary components
of $R$ by $\alpha_{1},\cdots,\alpha_{n}$, and fix $\alpha_{i}^{+}$
the $\alpha_{i}$ equipped with an orientation, $i=1,\cdots,n$. Let
\[
G_{+\infty}\coloneqq\{\tau\in G(\infty):\text{if }\tau\text{ spirals towards }\alpha_{i}\text{, then }\tau\text{ spirals towards }\alpha_{i}^{+}\}.
\]

Notice that $S$ is topologically a surface of genus $g$ with $n$
punctures. Recall that $AC(S)$ is the arc and curve complex of $S$,
and $G_{c}$ is the set of simple closed geodesics in $S$. For any
$\beta\in|AC(S)|$, let $r(\beta)$ be its geodesic representative
in $G_{c}\cup G_{+\infty}$. We have:
\begin{lem}
\label{lem:AC_realization}The mapping $r:|AC(S)|\rightarrow G_{c}\cup G_{+\infty}$
is a well-defined bijection such that $\beta_{1},\beta_{2}\in|AC(S)|$
are joined by an edge of $AC(S)$ if and only if $r(\beta_{1})$ is
disjoint from $r(\beta_{2})$.
\end{lem}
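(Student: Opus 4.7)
My plan is to treat the closed-curve and arc vertices of $AC(S)$ separately, then prove the disjointness equivalence by a bigon-type argument. For an essential simple closed curve class $\beta \in |AC(S)|$, the classical theorem on free homotopy classes on a complete hyperbolic surface provides a unique simple closed geodesic representative, giving directly a bijection between the closed-curve vertices of $AC(S)$ and $G_c$. The substantive content is the arc case.

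For an essential simple arc class $\beta$ with ends at the punctures $p_i$ and $p_j$, I would build $r(\beta)$ from the universal cover $\tilde{S}\subset\mathbb{H}^2$. Pick a simple arc representative $\gamma$ of $\beta$ and a lift $\tilde{\gamma}\subset\tilde{S}$; its two ends accumulate on specific lifts $\tilde{\alpha}_i,\tilde{\alpha}_j$ of the boundary geodesics. The orientations $\alpha_i^+$ and $\alpha_j^+$ distinguish a preferred ideal endpoint on each of $\tilde{\alpha}_i,\tilde{\alpha}_j$, and I let $\tilde{\tau}$ be the unique geodesic of $\mathbb{H}^2$ joining those two preferred ideal endpoints; its projection $\tau$ to $S$ is my candidate $r(\beta)$. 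Then $\tau$ is simple, because two distinct lifts of $\tau$ crossing would force two distinct lifts of $\gamma$ to cross, and by construction $\tau$ spirals towards $\alpha_i^+$ and $\alpha_j^+$, so $\tau\in G_{+\infty}$. Independence from the choice of $\tilde{\gamma}$ follows because a different lift differs by a deck transformation that maps ``+''-preferred endpoints to ``+''-preferred endpoints on the translated lifts; uniqueness within $G_{+\infty}$ follows because any other spiraling geodesic representing $\beta$ has a lift with the same pair of ideal endpoints and hence projects to the same geodesic downstairs.

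Bijectivity of $r$ then splits into injectivity, which is immediate from the ideal-endpoint characterization of the lifts, and surjectivity onto $G_{+\infty}$, obtained by truncating any $\tau\in G_{+\infty}$ inside the collars of its spiral-target boundaries and filling in short radial arcs into the corresponding punctures, producing a simple arc whose class is sent back to $\tau$. For the disjointness equivalence: if $\beta_1,\beta_2$ span a $1$-simplex of $AC(S)$, they admit disjoint topological representatives, which can be deformed into spiraling form inside disjoint collar annuli; the resulting lifts have pairs of ideal endpoints that do not link on $\partial\mathbb{H}^2$, so $r(\beta_1)$ and $r(\beta_2)$ have no intersecting lifts and are disjoint in $S$. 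The converse is trivial because geodesics are particular representatives of their homotopy classes. The step I expect to be most delicate is verifying that the spiraling deformation really preserves disjointness inside the collars, for which I will rule out bigons -- possibly with ideal vertices on the spiral-target boundary geodesics -- using the standard bigon criterion together with Gauss--Bonnet.
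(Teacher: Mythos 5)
Your proposal is correct and follows essentially the same route as the paper: both realize an arc class as the geodesic joining the orientation-preferred ideal endpoints of the associated boundary-geodesic lifts in the universal cover, and both reduce the disjointness/adjacency equivalence to linking of ideal endpoint pairs on $\partial\mathbb{H}^{2}$. The only real difference is that you prove the substantive implication in the direction ``adjacent $\Rightarrow$ geodesic representatives disjoint'' by spiraling disjoint representatives inside collars, whereas the paper proves the contrapositive ``geodesics intersect $\Rightarrow$ every pair of representatives intersects'' directly from the configuration of the four boundary lifts, which lets it bypass the delicate collar-deformation step you flag.
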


\begin{proof}
\begin{figure}[h]
\noindent \begin{centering}
\includegraphics{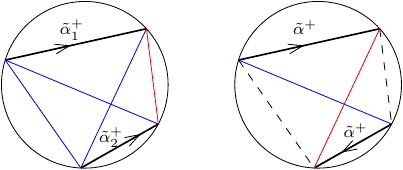}
\par\end{centering}
\caption{\label{fig:AC_arc}The left illustrates the case where $\beta$ spirals
towards two boundary components, while the right illustrates the case
where $\beta$ spirals towards one boundary component in two directions.
The red lines stand for $r(\beta)$.}
\end{figure}

Suppose that $\beta\in|AC(S)|$. If $\beta$ is the homotopy class
of a simple closed curve, then there exists a unique geodesic representative
$r(\beta)\in G_{c}$. If $\beta$ is the homotopy class of a simple
arc, then $\beta$ has two or four geodesic representatives in $G(\infty)$,
among which there exists a unique representative $r(\beta)\in G_{+\infty}$;
see Figure \ref{fig:AC_arc}. It follows that $r$ is well-defined.
Clearly $r$ is a bijection.

If $r(\beta_{1})$ is disjoint from $r(\beta_{2})$, from the definition
of $AC(S)$ we see that $\beta_{1}$ and $\beta_{2}$ are joined by
an edge of $AC(S)$.

Conversely, suppose that $r(\beta_{1})$ intersects $r(\beta_{2})$.
Here we deal only with the case where $r(\beta_{1}),r(\beta_{2})\in G_{+\infty}$,
since the other cases are similar.

\begin{figure}[h]
\noindent \begin{centering}
\includegraphics{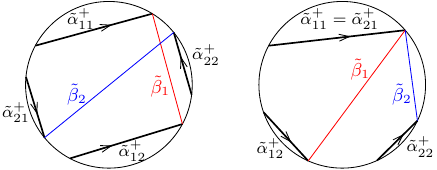}
\par\end{centering}
\caption{\label{fig:AC_arc_disjoint}If, for example, $\tilde{\alpha}_{11}=\tilde{\alpha}_{21}$,
then $\tilde{\beta}_{1}$ is disjoint from $\tilde{\beta}_{2}$, which
contradicts our hypothesis.}
\end{figure}

Let $\tilde{\beta}_{1}$ and $\tilde{\beta}_{2}$ be lifts of $r(\beta_{1})$
and $r(\beta_{2})$ to the universal covering space, respectively,
such that $\tilde{\beta}_{1}$ intersects $\tilde{\beta}_{2}$. For
$i=1,2$, we denote by $\tilde{\alpha}_{i1},\tilde{\alpha}_{i2}$
the geodesic boundaries sharing common ideal endpoints with $\tilde{\beta}_{i}$.
The geodesics $\tilde{\alpha}_{11},\tilde{\alpha}_{12},\tilde{\alpha}_{21},\tilde{\alpha}_{22}$
must be distinct geodesics; see Figure \ref{fig:AC_arc_disjoint}.
In addition, any two of $\tilde{\alpha}_{11},\tilde{\alpha}_{12},\tilde{\alpha}_{21},\tilde{\alpha}_{22}$
do not have a common endpoint, since their projection to $R$ are
geodesic boundaries.

Let $\gamma_{1}$ and $\gamma_{2}$ be arbitrary representatives of
$\beta_{1}$ and $\beta_{2}$, respectively. Then for $i=1,2$, there
exists a lift $\tilde{\gamma}_{i}$ of $\gamma_{i}$ that connects
$\tilde{\alpha}_{i1}$ to $\tilde{\alpha}_{i2}$. Here we allow each
endpoint of $\tilde{\gamma}_{i}$ to be an endpoint of $\tilde{\alpha}_{i1}$
or $\tilde{\alpha}_{i2}$. From the left of Figure \ref{fig:AC_arc_disjoint}
we see that $\tilde{\gamma}_{1}$ intersects $\tilde{\gamma}_{2}$,
hence $\gamma_{1}$ intersects $\gamma_{2}$. It follows that $\beta_{1}$
and $\beta_{2}$ are not adjacent in $AC(S)$.
\end{proof}

\begin{prop}
\label{prop:geodesic_elementary_move}(1) Let $\delta$ be an ideal
triangulation whose edges are elements of $G_{+\infty}$, and $\delta'$
an ideal triangulation obtained from $\delta$ by a geodesic elementary
move. Then the edges of $\delta'$ are elements of $G_{+\infty}$.

(2) Let $\delta,\delta'$ be two ideal triangulations whose edges
are elements of $G_{+\infty}$. Then $\delta$ and $\delta'$ are
related by a finite sequence of geodesic elementary moves.
\end{prop}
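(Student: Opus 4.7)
The plan is to prove (1) by tracking spiral orientations across the exchanged quadrilateral, and then to deduce (2) from Mosher's Connectivity Theorem \ref{thm:Mosher} via the realization bijection of Lemma \ref{lem:AC_realization}.

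For (1), I would let $h$ be the edge of $\delta$ that is removed, let $T_1, T_2$ be the two triangles of $\delta$ meeting along $h$, with vertices $v_1, v_2, v_3$ and $v_1, v_3, v_4$ respectively, and let $h'$ be the new geodesic edge joining $v_2$ to $v_4$. Writing $\alpha_{i(k)}$ for the boundary component of $R$ to which the ideal vertex $v_k$ corresponds, I would observe that since every edge of $\delta$ lies in $G_{+\infty}$, the edges $v_1v_2$ and $v_2v_3$ of $T_1$ both spiral towards $\alpha_{i(2)}^{+}$ at $v_2$; they remain in $\delta'$ and are therefore disjoint from $h'$. If $h'$ spiraled at $v_2$ towards the opposite orientation of $\alpha_{i(2)}$, Lemma \ref{lem:spiral_opposite} would force $h'$ to intersect $v_1v_2$ infinitely many times, contradicting disjointness. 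Hence $h'$ must spiral towards $\alpha_{i(2)}^{+}$ at $v_2$; the symmetric argument at $v_4$ would give $h' \in G_{+\infty}$.

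For (2), I would apply Theorem \ref{thm:Mosher} to obtain a finite sequence of combinatorial elementary moves $\delta = \delta_0, \delta_1, \ldots, \delta_m = \delta'$, regarding each $\delta_i$ as a collection of homotopy classes in $|AC(S)|$. Using the bijection $r \colon |AC(S)| \to G_c \cup G_{+\infty}$ from Lemma \ref{lem:AC_realization}, I would pass to the geodesic realizations $r(\delta_i)$. The disjointness clause of Lemma \ref{lem:AC_realization} ensures that the edges of each $r(\delta_i)$ are pairwise disjoint, and since the combinatorial pattern is preserved, $r(\delta_i)$ is again an ideal triangulation. The hypothesis that $\delta, \delta'$ already have edges in $G_{+\infty}$ gives $r(\delta_0) = \delta$ and $r(\delta_m) = \delta'$. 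A straightforward induction using (1) would then show that each step $r(\delta_i) \to r(\delta_{i+1})$ is a geodesic elementary move between triangulations whose edges lie in $G_{+\infty}$, completing the proof.

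The main obstacle I anticipate lies in (1), namely in ruling out the ``wrong'' spiral orientation for $h'$. The key rigidity is that all edges of a $G_{+\infty}$-triangulation incident to a given boundary component must share a common spiral orientation (by Lemma \ref{lem:spiral_opposite} together with pairwise disjointness), and this constraint is inherited by $h'$ through its disjointness with the surrounding edges of the quadrilateral.
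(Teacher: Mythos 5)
Your proposal is correct and follows essentially the same route as the paper: part (1) is the paper's argument (an edge of the quadrilateral remaining in $\delta'$ spirals towards the same boundary component as $h'$ and is disjoint from it, so Lemma \ref{lem:spiral_opposite} forces the orientation), and part (2) is the paper's combination of Theorem \ref{thm:Mosher} with the realization map $r$ of Lemma \ref{lem:AC_realization}. You simply spell out the details (identifying the quadrilateral edges explicitly, and the induction in (2)) that the paper leaves implicit.
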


\begin{proof}
(1) Suppose that an edge $h$ of $\delta$ is replaced by a geodesic
$h'$ when we transform $\delta$ into $\delta'$ by the elementary
move. Suppose that $h'$ spirals towards a boundary component $\alpha$.
Then from the definition of elementary move we see that there exists
an edge $\tau\neq h$ of $\delta$ such that $\tau$ spirals towards
$\alpha$ and is disjoint from $h'$. Hence it follows from Lemma
\ref{lem:spiral_opposite} that $h'\in G_{+\infty}$.

(2) Theorem \ref{thm:Mosher} tells us that $\delta$ and $\delta'$
are related by a finite sequence of elementary moves. We can turn
these elementary moves into the desired geodesic ones by the mapping
$r$ in Lemma \ref{lem:AC_realization}.
\end{proof}

\subsection{\label{subsec:Homotopy-arc}Homotopy classes of simple arcs}

Recall that we fix a bijection $f:S\rightarrow S$ such that $f$
and $f^{-1}$ map each geodesic onto some geodesic. We conclude from
the discussion in Section \ref{sec:A-preliminary-classification}
that $G_{c}$ and $G$ are invariant under $f$, hence $G(\infty)$
is invariant under $f$ by Lemma \ref{lem:G_infty}, and therefore
$G(p,\infty)$ is invariant under $f$ by Proposition \ref{prop:G_p_infty}.
Since $G(p)=G-G(\infty)-G(p,\infty)$, we see that $G(p)$ is also
invariant under $f$.

Although $f$ is not defined on $\partial R$, we can say something
about the action of $f$ on $\partial R$ via $G(\infty)$:
\begin{lem}
There exists $\alpha_{i}^{+f}$ the boundary component $\alpha_{i}$
equipped with an orientation for $i=1,\cdots,n$, such that $f$ induces
a bijection
\[
f_{*}:G_{+\infty}\rightarrow G_{+\infty}^{f},\quad\tau\mapsto f(\tau),
\]
where $G_{+\infty}^{f}\coloneqq\{\tau\in G(\infty):$ if $\tau$ spirals
towards $\alpha_{i}$, then $\tau$ spirals towards $\alpha_{i}^{+f}\}$.
\end{lem}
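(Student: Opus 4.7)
The plan is to define the orientations $\alpha_i^{+f}$ by inspecting the image under $f$ of a single ideal triangulation, then to verify that the definition is canonical and that it works for every $\tau\in G_{+\infty}$.

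First, fix an ideal triangulation $\delta$ of $S$ whose edges all lie in $G_{+\infty}$ (extend any single arc and invoke Proposition \ref{prop:geodesic_elementary_move}(1) along the way). I claim $f(\delta):=\{f(e):e\in\delta\}$ is itself an ideal triangulation of $S$ with edges in $G(\infty)$. Indeed, $f$ preserves $G(\infty)$ and disjointness, so $f(\delta)$ is a disjoint family in $G(\infty)$ of the correct cardinality. Moreover, any two distinct but homotopic arcs in $G(\infty)$ must differ by the spiral orientation at some boundary component and hence intersect infinitely many times by Lemma \ref{lem:spiral_opposite}; it follows that the edges of $f(\delta)$ are pairwise non-homotopic, and by maximality $f(\delta)$ forms an ideal triangulation. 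A second application of Lemma \ref{lem:spiral_opposite}, now to $f(\delta)$ itself, shows that all edges of $f(\delta)$ spiraling towards any given boundary component $\alpha_l$ share a common orientation, and I take $\alpha_l^{+f}$ to be that orientation (every boundary component appears, since every puncture of $S$ is a vertex of any ideal triangulation).

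The main task is then to show that $\{\alpha_l^{+f}\}$ does not depend on $\delta$. Given another ideal triangulation $\delta'$ with edges in $G_{+\infty}$, Proposition \ref{prop:geodesic_elementary_move}(2) provides a sequence of geodesic elementary moves from $\delta$ to $\delta'$ through ideal triangulations whose edges remain in $G_{+\infty}$. Applying $f$ yields a corresponding sequence of ideal triangulations in $G(\infty)$, in which consecutive terms $f(\delta^{(j)})$ and $f(\delta^{(j+1)})$ share all but one edge. I would then argue that for every boundary component $\alpha_l$ there exists at least one shared edge ending at $\alpha_l$: this uses the fact that every puncture has valence at least $2$ in any ideal triangulation, together with the observation that even if the removed edge $f(h)$ happens to be a loop at $\alpha_l$, the four boundary edges of the quadrilateral surrounding it are also incident to $\alpha_l$ and are shared between $f(\delta^{(j)})$ and $f(\delta^{(j+1)})$. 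Those shared edges therefore determine a common orientation at each $\alpha_l$, and the assignment propagates unchanged along the whole move sequence. This valence case analysis is the step I expect to demand the most care.

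Finally, for an arbitrary $\tau\in G_{+\infty}$ I extend $\tau$ to an ideal triangulation $\delta_\tau$ with edges in $G_{+\infty}$; by the previous step, $f(\tau)$ is an edge of the ideal triangulation $f(\delta_\tau)$ and spirals with the canonical orientations $\alpha_l^{+f}$, i.e.\ $f(\tau)\in G_{+\infty}^f$. This defines $f_*$, which is injective as a restriction of the bijection $f$. For surjectivity I repeat the entire construction with $f$ and $G_{+\infty}$ replaced by $f^{-1}$ and $G_{+\infty}^f$, respectively — note that $G_{+\infty}^f$ has exactly the same structural form as $G_{+\infty}$ (it is cut out by a choice of spiral orientation at each boundary), so the argument applies verbatim — producing orientations that, by $f\circ f^{-1}=\mathrm{id}$, must coincide with the original $\{\alpha_l^{+}\}$; hence $f^{-1}(G_{+\infty}^f)=G_{+\infty}$, and $f_*$ is the claimed bijection.
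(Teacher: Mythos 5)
Your proposal is correct and follows essentially the same route as the paper: both define $\alpha_i^{+f}$ from the image of a single ideal triangulation with edges in $G_{+\infty}$ via Lemma \ref{lem:spiral_opposite}, propagate the orientation data along the chain of geodesic elementary moves supplied by Proposition \ref{prop:geodesic_elementary_move}, and obtain bijectivity by running the argument for $f^{-1}$. Your shared-edge/valence analysis across a move is just an unpacking of the proof of Proposition \ref{prop:geodesic_elementary_move}(1), which the paper instead cites directly (in its $G_{+\infty}^{f}$ version) after checking that $f$ carries a geodesic elementary move to a geodesic elementary move.
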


\begin{proof}
Let $\tau_{1},\cdots,\tau_{m}\in G_{+\infty}$ be the edges of an
ideal triangulation $\delta$. Then $f(\tau_{1}),\cdots,f(\tau_{m})\in G(\infty)$
are the edges of the ideal triangulation $f(\delta)$. Hence for every
boundary component $\alpha$, there exists a geodesic $f(\tau_{i})$
that spirals towards $\alpha$. In addition, if $f(\tau_{i})$ and
$f(\tau_{j})$ spiral towards $\alpha_{k}$, then from Lemma \ref{lem:spiral_opposite}
we see that they spiral towards $\alpha_{k}$ equipped with the same
orientation, which we denote by $\alpha_{k}^{+f}$. In this way we
obtain $\alpha_{i}^{+f}$ for $i=1,\cdots,n$. Notice that $f(\tau_{1}),\cdots,f(\tau_{m})\in G_{+\infty}^{f}$.

Suppose that we transform $\delta$ into another ideal triangulation
$\delta'$ by a geodesic elementary move that replaces $\tau_{i}$
by a geodesic $\tau_{i}'$. Then $f(\tau_{i}')\cap f(\delta)=f(\tau_{i}')\cap f(\tau_{i})=f(\tau_{i}'\cap\tau_{i})$
is a point. Hence $f(\delta')$ is obtained from $f(\delta)$ by a
geodesic elementary move that replaces $f(\tau_{i})$ by $f(\tau_{i}')$.
Using a version of Proposition \ref{prop:geodesic_elementary_move}
for $G_{+\infty}^{f}$, we see that the edges of $f(\delta')$ are
elements of $G_{+\infty}^{f}$. In particular, $f(\tau_{i}')$ belongs
to $G_{+\infty}^{f}$.

Let $\tau$ be an arbitrary element of $G_{+\infty}$. We can extend
$\tau$ to an ideal triangulation $\delta_{\tau}$ whose edges are
elements of $G_{+\infty}$. Then Proposition \ref{prop:geodesic_elementary_move}
tells us that $\delta_{\tau}$ and $\delta$ are related by a finite
sequence of geodesic elementary moves. By the previous paragraph,
we have $f(\tau)\in G_{+\infty}^{f}$. It follows that $f_{*}:G_{+\infty}\rightarrow G_{+\infty}^{f}$
is a well-defined map.

Repeating the reasoning above, we see that
\[
(f^{-1})_{*}:G_{+\infty}^{f}\rightarrow G_{+\infty},\quad\tau\mapsto f^{-1}(\tau)
\]
is a well-defined map. It follows that $f_{*}:G_{+\infty}\rightarrow G_{+\infty}^{f}$
is a bijection.
\end{proof}

Extend $f_{*}$ to $G_{c}\cup G_{+\infty}$ by setting $f_{*}(\tau)=f(\tau)$
for $\tau\in G_{c}$. Then $f_{*}:G_{c}\cup G_{+\infty}\rightarrow G_{c}\cup G_{+\infty}^{f}$
is a bijection.

For any $\beta\in|AC(S)|$, let $r^{f}(\beta)$ be its geodesic representative
in $G_{c}\cup G_{+\infty}^{f}$. Similar to Lemma \ref{lem:AC_realization},
the mapping $r^{f}:|AC(S)|\rightarrow G_{c}\cup G_{+\infty}^{f}$
is a well-defined bijection such that $\beta_{1},\beta_{2}\in|AC(S)|$
are joined by an edge of $AC(S)$ if and only if $r^{f}(\beta_{1})$
is disjoint from $r^{f}(\beta_{2})$.

From the discussion above, we see that
\[
\varphi:|AC(S)|\rightarrow|AC(S)|,\quad\beta\mapsto(r^{f})^{-1}(f_{*}(r(\beta)))
\]
is a well-defined bijection such that $\beta_{1},\beta_{2}$ are adjacent
in $AC(S)$ if and only if $\varphi(\beta_{1}),\varphi(\beta_{2})$
are adjacent in $AC(S)$, and hence $\varphi$ can be naturally extended
to a simplicial automorphism of $AC(S)$. It follows from Theorem
\ref{thm:KP} that there exists a homeomorphism $h:S\rightarrow S$
such that $\varphi(\beta)=h(\beta)$ for any $\beta\in|AC(S)|$. We
regard $h$ as the ``topological type'' of $f$.

We have proved that, for every $\tau\in G_{c}\cup G_{+\infty}$, the
geodesic $f(\tau)$ belongs to $G_{c}\cup G_{+\infty}^{f}$ and is
homotopic to $h(\tau)$. We can naturally extend $h$ to a self-homeomorphism
of $R$. It follows immediately that $\tau\in G_{+\infty}$ spirals
towards $\alpha\subset\partial R$ if and only if $f(\tau)\in G_{+\infty}^{f}$
spirals towards $h(\alpha)\subset\partial R$.
\begin{lem}
\label{lem:homotopy}Suppose that $\tau_{1},\tau_{2}\in G(p)$. Then
$\tau_{1}$ and $\tau_{2}$ are homotopic if and only if $f(\tau_{1})$
and $f(\tau_{2})$ are homotopic.
\end{lem}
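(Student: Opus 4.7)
The plan is to prove the stronger statement that $f(\tau)\simeq h(\tau)$ for every $\tau\in G(p)$, where $h$ is the self-homeomorphism of $R$ from the preceding subsection. Granting this, the lemma is immediate: since $h$ is a homeomorphism of $R$, the conditions $\tau_{1}\simeq\tau_{2}$, $h(\tau_{1})\simeq h(\tau_{2})$, and $f(\tau_{1})\simeq f(\tau_{2})$ are equivalent.

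To prove $f(\tau)\simeq h(\tau)$, I will use the pair-of-pants structure from the proof of Lemma \ref{lem:G}: every $\tau\in G(p)$ is contained in a pair of pants $P_{\tau}\subset R$ whose boundary consists of the one or two boundary components of $R$ hit by $\tau$, together with one or two essential simple closed curves whose geodesic representatives lie in $G_{c}$ (or exceptionally on $\partial R$, which is handled analogously). The isotopy class of $P_{\tau}$, together with the labeling of its $\partial R$-boundaries and, in the case where both endpoints of $\tau$ lie on the same boundary, the designation of which internal curve $\tau$ separates from the other, determines the homotopy class of $\tau$ uniquely in the pair of pants.

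The central step is to show $f(\tau)\subset h(P_{\tau})$ up to isotopy. For any geodesic $\rho\in G_{c}\cup G_{+\infty}$ lying in $R\setminus\operatorname{int}(P_{\tau})$, one has $\rho\cap\tau=\emptyset$, hence $f(\rho)\cap f(\tau)=\emptyset$ because the bi-geodesic bijection $f$ preserves disjointness. By the discussion preceding the lemma, $f(\rho)$ is the unique geodesic in $G_{c}\cup G_{+\infty}^{f}$ homotopic to $h(\rho)$, so $f(\rho)\subset R\setminus\operatorname{int}(h(P_{\tau}))$. Letting $\rho$ range over the edges of an ideal triangulation of $R\setminus\operatorname{int}(P_{\tau})$ whose edges lie in $G_{+\infty}$ (which exists by Proposition \ref{prop:geodesic_elementary_move}), augmented by enough simple closed geodesics in $G_{c}$ to encircle any remaining regions, one obtains a collection of geodesics whose $f$-images fill $R\setminus\operatorname{int}(h(P_{\tau}))$. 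Since $f(\tau)$ is a connected geodesic disjoint from each of them, it must be contained in $h(P_{\tau})$. Because $f(\tau)\in G(p)$ has endpoints on $\partial R$, these endpoints lie on the $\partial R$-boundaries of $h(P_{\tau})$, namely $h(\alpha),h(\beta)$ (or $h(\alpha)$ twice). Combining this with the preservation of the separation relation of $\tau$ with respect to the internal pants curves (handled by the same disjointness argument applied to curves isotopic to $\gamma_{1},\gamma_{2}$), uniqueness of the arc class in the pants yields $f(\tau)\simeq h(\tau)$.

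The main obstacle is the filling step: verifying rigorously that the chosen collection of geodesics in $R\setminus\operatorname{int}(h(P_{\tau}))$ really blocks $f(\tau)$ from entering. Care is needed in low-complexity cases such as $(g,n)=(1,2)$, where $R\setminus\operatorname{int}(P_{\tau})$ has no puncture and one must rely entirely on simple closed geodesics in $G_{c}$, and in the case where $\tau$ has both endpoints on the same boundary, where the consistency of the labeling of the two internal pants curves under $f$ must be established using disjointness data alone.
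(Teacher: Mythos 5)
Your overall strategy --- trap $f(\tau)$ in the image pair of pants using disjointness data, then invoke uniqueness of arc classes in a pair of pants --- is essentially the paper's, but the step you yourself flag as ``the main obstacle,'' namely filling $R\setminus\operatorname{int}(h(P_{\tau}))$ by images of geodesics disjoint from $\tau$, is a genuine gap and not a routine verification. An ideal triangulation of $R\setminus\operatorname{int}(P_{\tau})$ with edges in $G_{+\infty}$ simply does not exist whenever a component of that subsurface contains no boundary component of $R$ (for instance when $n=1$, or when $\tau$ hits the only boundary components of $R$); and even when such components are present, $R\setminus\operatorname{int}(P_{\tau})$ has geodesic boundary circles, so the complement of any family of complete geodesics asymptotic to $\partial R$ contains crown regions along those circles rather than ideal triangles, and your fallback of ``augmenting by simple closed geodesics in $G_{c}$'' is never shown to meet every geodesic entering the region. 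A separate, fixable slip: $h$ is only a homeomorphism, so $h(P_{\tau})$ need not have geodesic boundary; you must pass to the geodesic pair of pants $P'$ isotopic to $h(P_{\tau})$ before asserting that geodesics homotopic into the complement actually lie in the complement.

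The paper closes exactly this gap with two cheaper ingredients, both absent from your proposal. First, it does not fill anything: it extends $\partial P_{\tau}\cap S$ to a full system of decomposing curves $\{\xi_{1},\dots,\xi_{N}\}$; since $\{f(\xi_{1}),\dots,f(\xi_{N})\}$ is again a system of decomposing curves and $f(\tau)$ is disjoint from their union, $f(\tau)$ automatically lies in a single complementary pair of pants. Second, to identify which boundary components of $R$ the geodesic $f(\tau)$ hits --- the point your argument derives only from the unproved containment --- it chooses $\sigma\in G_{+\infty}$ spiraling towards $\alpha$ in both directions; then $\tau$ meets $\sigma$ infinitely many times, hence $f(\tau)$ meets $f(\sigma)$ infinitely many times (a bijection preserves cardinalities of intersections) and therefore hits $h(\alpha)$. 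These two facts pin down the pair of pants and the arc class simultaneously, proving the lemma, and they also yield your stronger claim $f(\tau)\simeq h(\tau)$ without any filling argument. I recommend replacing your filling step by this second ingredient.
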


\begin{proof}
It is sufficient to show that if $\tau_{1}$ and $\tau_{2}$ are homotopic,
then $f(\tau_{1})$ and $f(\tau_{2})$ are homotopic.

As we see in the proof of Lemma \ref{lem:G}, $\tau_{1}$ is contained
in a pair of pants $P\subset R$. Since $\tau_{2}$ is homotopic to
$\tau_{1}$, it is also contained in $P$.

Extend $P$ to a pants decomposition of $S$ and denote the corresponding
system of decomposing curves by $\{\xi_{1},\cdots,\xi_{N}\}$. Because
$\tau_{1},\tau_{2}$ are disjoint from $\xi_{1}\cup\cdots\cup\xi_{N}$,
we see that $f(\tau_{1}),f(\tau_{2})$ are disjoint from $f(\xi_{1})\cup\cdots\cup f(\xi_{N})$.
Notice that $\{f(\xi_{1}),\cdots,f(\xi_{N})\}$ is also a system of
decomposing curves.

Assume that $\tau_{1}$ hits a boundary component $\alpha$. Choose
a geodesic $\tau\in G_{+\infty}$ that spirals towards $\alpha$ in
two directions. Then $\tau_{1}$ intersects $\tau$ infinitely many
times, hence $f(\tau_{1})$ intersects $f(\tau)$ infinitely many
times and hits the boundary component $h(\alpha)$. Since $\tau_{2}$
is homotopic to $\tau_{1}$, it also hits $\alpha$, and hence $f(\tau_{2})$
hits $h(\alpha)$ by the same reasoning as above. It follows that
$f(\tau_{1})$ hits the same boundary components as $f(\tau_{2})$.
Combining this with the discussion in the previous paragraph, we see
that $f(\tau_{1})$ and $f(\tau_{2})$ are contained in the same pair
of pants. Since the homotopy class of a simple arc in a pair of pants
is determined by the boundary components it hits, we conclude that
$f(\tau_{1})$ is homotopic to $f(\tau_{2})$.
\end{proof}

\section{\label{sec:Isometry}From bijection to isometry}
\begin{lem}
\label{lem:neighborhood}Suppose $\tau\in G(p)$ hits $\partial R$
at two points. Then for any $\varepsilon$-neighborhood $V(f(\tau))$
of $f(\tau)$ there is a neighborhood $U(\tau)$ of $\tau$ such that
$f(U(\tau))\subset V(f(\tau))$.
\end{lem}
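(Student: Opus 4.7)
I would prove the lemma by contradiction, combining the topological type homeomorphism $h\colon S\to S$ constructed in Section~\ref{subsec:Homotopy-arc} with the fact that $f$, being a bijection, preserves set-theoretic intersections: $f(\sigma_1\cap\sigma_2)=f(\sigma_1)\cap f(\sigma_2)$ for all geodesics $\sigma_1,\sigma_2$. Suppose the conclusion fails: there exist $\varepsilon>0$ and a sequence $y_n\in S$ with $d(y_n,\tau)\to 0$ but $d(f(y_n),f(\tau))\geq\varepsilon$. Since $\tau$ hits $\partial R$ at two points, $\overline{\tau}\subset R$ is compact, so after passing to a subsequence $y_n\to y_\infty\in\overline{\tau}$. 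I will focus on the case when $y_\infty$ is an interior point of $\tau$, the boundary case being a minor modification (one works in a collar of the relevant boundary component).

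The core step is to pin down $f(y_n)$ by realizing $y_n$ as the unique intersection of two auxiliary geodesics through it. I would choose $\mu,\nu\in G(p)$ passing through $y_\infty$ with $\mu\cap\nu=\{y_\infty\}$, both transverse to $\tau$ at $y_\infty$; such $\mu,\nu$ exist since $y_\infty$ lies in the interior of $S$ and $G(p)$ is rich enough to contain arcs through any interior point in essentially any prescribed direction. For each large $n$, I would construct $\mu_n,\nu_n\in G(p)$ passing through $y_n$ whose geodesic representatives Hausdorff-converge to $\mu,\nu$ on a fixed compact neighborhood of $y_\infty$. Since the geodesic representative in any fixed homotopy class of $AC(S)$ is unique, the $\mu_n$ must lie in \emph{varying} homotopy classes; the existence of such $\mu_n$ through $y_n$ follows by perturbing the endpoints of $\mu$ on $\partial R$ inside a two-parameter family of geodesic arcs and applying transversality to find endpoint pairs $(p_n,q_n)$ whose corresponding arc passes through $y_n$.

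Applying $f$ and using bijectivity gives $f(\mu)\cap f(\nu)=\{f(y_\infty)\}$ and $f(y_n)\in f(\mu_n)\cap f(\nu_n)$. By Lemma~\ref{lem:homotopy} and the construction of $h$, $f(\mu)\sim h(\mu)$, $f(\mu_n)\sim h(\mu_n)$, and analogously for $\nu,\nu_n$, in $AC(S)$. Continuity of $h$ then forces $h(\mu_n)\to h(\mu)$ and $h(\nu_n)\to h(\nu)$ in Hausdorff distance on the compact neighborhood in question. Invoking the standard continuity of geodesic straightening in a hyperbolic surface (which converts Hausdorff-convergent families of homotopic or nearly-homotopic arcs into Hausdorff-convergent families of their geodesic representatives, at least locally), one obtains $f(\mu_n)\to f(\mu)$ and $f(\nu_n)\to f(\nu)$ in Hausdorff distance on a compact neighborhood of $f(y_\infty)$. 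Combined with the transversality of $f(\mu)$ and $f(\nu)$ at their unique intersection $f(y_\infty)$, this forces $f(y_n)\to f(y_\infty)\in f(\tau)$, contradicting $d(f(y_n),f(\tau))\geq\varepsilon$.

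The principal obstacle will be making geodesic straightening quantitatively continuous across the varying homotopy classes of $\mu_n$ and $\nu_n$: the classes change with $n$, so one cannot simply invoke a classical continuity statement within a fixed homotopy class. I expect this to be handled by working inside the pair of pants $P\subset R$ containing $\tau$ (whose existence is established in the proof of Lemma~\ref{lem:G}) and using the explicit hyperbolic geometry of pants, where the arcs $\mu_n$ can be described by bounded Fenchel--Nielsen parameters and their straightenings compared directly. Once this local continuity is secured, the rest of the argument is largely formal.
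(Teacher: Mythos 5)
There is a genuine gap at the heart of your argument, namely the step where you pass from $h(\mu_n)\to h(\mu)$ to $f(\mu_n)\to f(\mu)$ ``by continuity of geodesic straightening.'' For a simple closed curve, or for an arc spiraling into the boundary, the homotopy class does determine the geodesic representative; but for an element of $G(p)$ it does not. An arc class in $AC(S)$ contains a two-parameter family of geodesics in $G(p)$, obtained by moving the two endpoints along $\partial R$, and these sweep out an entire region of the surface. (Your premise that ``the geodesic representative in any fixed homotopy class of $AC(S)$ is unique'' is therefore false for $G(p)$; the paper's own proof exploits the opposite fact, since its set $L$ consists of infinitely many geodesics all homotopic to $f(\tau)$.) Consequently Lemma~\ref{lem:homotopy} tells you only the homotopy class of $f(\mu_n)$, which gives no control whatsoever on its position: $f(\mu_n)$ could be a geodesic in the correct class lying far from $f(\mu)$. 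Asserting that $\mu_n\to\mu$ forces $f(\mu_n)\to f(\mu)$ is precisely the continuity of $f$ on the space of geodesics that the lemma is meant to establish, so the argument is circular, and no amount of Fenchel--Nielsen bookkeeping inside the pair of pants repairs it, because the only $f$-invariant data available about $f(\mu_n)$ are its homotopy class and its intersection pattern with other geodesics.

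The paper's proof supplies exactly the trapping mechanism your proposal lacks. On the target side it chooses barrier geodesics $\eta_1,\eta_2\in G(p)$ inside $V(f(\tau))$, one on each side of $f(\tau)$, together with a transversal $\xi$, so that every $\rho\in G(p)$ that is homotopic to $f(\tau)$ and disjoint from $\eta_1\cup\eta_2$ (and, in Case~II, meets $\xi\cup f(\tau)$) is forced to lie in $V(f(\tau))$. Membership in this family $L$ is phrased entirely in terms of homotopy class within $G(p)$ (preserved by Lemma~\ref{lem:homotopy}) and disjointness/intersection (preserved because $f$ is a bijection mapping geodesics to geodesics), so $f^{-1}(L)$ is the analogous family around $\tau$, and one checks that the union of its members is a neighborhood $U(\tau)$ with $f(U(\tau))\subset V(f(\tau))$. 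Note also that when $\tau$ hits the same boundary component twice, the paper must first rule out that $f(\tau)$ hits $\partial R$ at only one point (via the equivalence-relation argument on the set $A$); your proposal never confronts this possibility. If you want to keep a sequential formulation, you would still need to build such barriers to confine $f(y_n)$, at which point you have reproduced the paper's construction.
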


\begin{proof}
Let $P\subset R$ be a pair of pants that contains $\tau$. Denote
the geodesic boundary components of $P$ by $\alpha,\beta,\gamma$.
Recall the homeomorphism $h:R\rightarrow R$ in Subsection \ref{subsec:Homotopy-arc}.
Denote the simple closed geodesics homotopic to $h(\alpha),h(\beta),h(\gamma)$
by $\alpha',\beta',\gamma'$, respectively. Let $P'$ be the pair
of pants whose boundary components are $\alpha',\beta',\gamma'$.
Then $f(\tau)$ is contained in $P'$. Without loss of generality,
we may assume that $\gamma$ is contained in $S$. The proof divides
into two cases.

\textbf{Case I}: Assume that $\tau$ connects $\alpha$ to $\beta$.

\begin{figure}[h]
\noindent \begin{centering}
\includegraphics{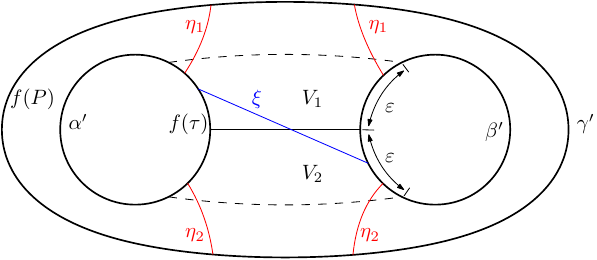}
\par\end{centering}
\caption{\label{fig:homeomorphism_2}The case that $\tau$ connects $\alpha$
to $\beta$.}
\end{figure}

We denote the connected components of $V(f(\tau))-f(\tau)$ by $V_{1}$
and $V_{2}$. Choose the geodesics $\xi,\eta_{1},\eta_{2}\in G(p)$
as illustrated in Figure \ref{fig:homeomorphism_2} such that:

(i) $\xi$ is contained in $V(f(\tau))$ and meets $f(\tau)$ at one
point,

(ii) $\xi$ is homotopic to $f(\tau)$,

(iii) $\eta_{1},\eta_{2}$ are geodesics that intersect $\gamma$
and connect $\alpha'$ to $\beta'$,

(iv) $\eta_{1}$ hits $\partial V_{1}\cap\partial R$, while $\eta_{2}$
hits $\partial V_{2}\cap\partial R$,

(v) $\xi\cup f(\tau)$ is disjoint from $\eta_{1}\cup\eta_{2}$.

Let $L$ be the set of geodesics $\rho\in G(p)$ satisfying:

(1) $\rho$ is homotopic to $f(\tau)$, and

(2) $\rho$ is disjoint from $\eta_{1}\cup\eta_{2}$.

\noindent Then $f(\tau)$ and $\xi$ belong to $L$, and every $\rho\in L$
is contained in $V(f(\tau))$.

Lemma \ref{lem:homotopy} tells us that $f^{-1}(L)$ is the set of
geodesics $\rho\in G(p)$ satisfying:

(1$'$) $\rho$ is homotopic to $\tau$, and

(2$'$) $\rho$ is disjoint from $f^{-1}(\eta_{1})\cup f^{-1}(\eta_{2})$.

\noindent Notice that $\tau$ and $f^{-1}(\xi)$ belong to $f^{-1}(L)$.
Since:

(i$'$) $f^{-1}(\xi)$ meets $\tau$ at one point,

(ii$'$) $f^{-1}(\xi)$ is homotopic to $\tau$,

(v$'$) $f^{-1}(\xi)\cup\tau$ is disjoint from $f^{-1}(\eta_{1})\cup f^{-1}(\eta_{2})$,

\noindent we see that $U(\tau)\coloneqq\{x\in S:x\in\rho$ for some
$\rho\in f^{-1}(L)\}$ is a neighborhood of $\tau$ such that $f(U(\tau))\subset V(f(\tau))$.

\textbf{Case II}: Assume without loss of generality that $\tau$ hits
$\alpha$ in two directions.

We first show that $f(\tau)$ hits $\partial R$ at two points. Suppose
to the contrary that $f(\tau)$ hits $\partial R$ at one point as
illustrated in Figure \ref{fig:distinct_endpoints}.

\begin{figure}[h]
\noindent \begin{centering}
\includegraphics{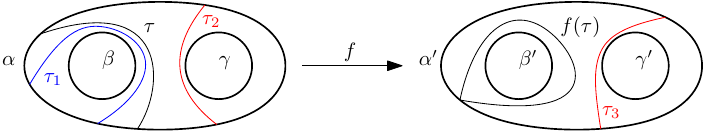}
\par\end{centering}
\caption{\label{fig:distinct_endpoints}What if $f(\tau)$ hits $\partial R$
only once?}
\end{figure}

Let $A=\{\rho\in G(p):\rho$ is homotopic to and disjoint from $\tau\}$.
We define a relation $\sim$ on $A$ as follows:

(a) If $\rho_{1}$ intersects $\rho_{2}$, then $\rho_{1}\sim\rho_{2}$;

(b) If $\rho_{1}\sim\rho_{2}$ and $\rho_{2}\sim\rho_{3}$, then $\rho_{1}\sim\rho_{3}$.

\noindent Clearly $\sim$ is an equivalence relation on $A$ and divides
$A$ into two equivalence classes.

The relation $\sim$ is preserved by $f$. More precisely, let $A'=\{\rho\in G(p):\rho$
is homotopic to and disjoint from $f(\tau)\}$ and define an equivalence
relation $\sim$ on $A'$ as follows:

(a$'$) If $\rho_{1}$ intersects $\rho_{2}$, then $\rho_{1}\sim\rho_{2}$;

(b$'$) If $\rho_{1}\sim\rho_{2}$ and $\rho_{2}\sim\rho_{3}$, then
$\rho_{1}\sim\rho_{3}$.

\noindent Then for any $\rho_{1},\rho_{2}\in A$, we have $\rho_{1}\sim\rho_{2}\Leftrightarrow f(\rho_{1})\sim f(\rho_{2})$.

However, $A'$ does not divide into two equivalence classes. In fact,
for any $\rho_{1},\rho_{2}\in A'$, we have $\rho_{1}\sim\rho_{2}$;
see the right of Figure \ref{fig:distinct_endpoints}. Thus we reach
a contradiction. Hence $f(\tau)$ must hit $\partial R$ at two points.

\begin{figure}[h]
\noindent \begin{centering}
\includegraphics{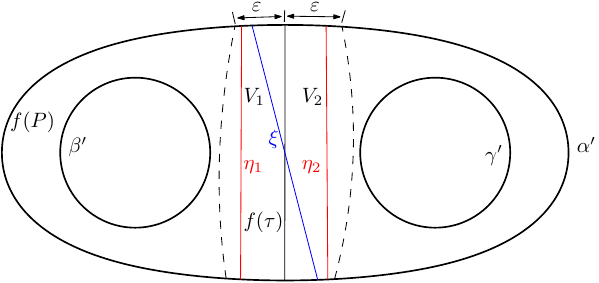}
\par\end{centering}
\caption{\label{fig:homeomorphism_1}The case that $\tau$ hits $\alpha$ twice.}
\end{figure}

As before, we denote the connected components of $V(f(\tau))-f(\tau)$
by $V_{1}$ and $V_{2}$. Choose the geodesics $\xi,\eta_{1},\eta_{2}\in G(p)$
such that:

(i) $\xi$ is contained in $V(f(\tau))$ and meets $f(\tau)$ at one
point,

(ii) $\xi,\eta_{1},\eta_{2}$ are homotopic to $f(\tau)$,

(iii) $\eta_{1}$ is contained in $V_{1}$, while $\eta_{2}$ is contained
in $V_{2}$,

(iv) $\xi\cup f(\tau)$ is disjoint from $\eta_{1}\cup\eta_{2}$.

Let $L$ be the set of geodesics $\rho\in G(p)$ satisfying:

(1) $\rho$ is homotopic to $f(\tau)$, and

(2) $\rho$ is disjoint from $\eta_{1}\cup\eta_{2}$ but intersects
$\xi\cup f(\tau)$.

\noindent Then $f(\tau)$ and $\xi$ belong to $L$, and every $\rho\in L$
is contained in $V(f(\tau))$.

Lemma \ref{lem:homotopy} tells us that $f^{-1}(L)$ is the set of
geodesics $\rho\in G(p)$ satisfying:

(1$'$) $\rho$ is homotopic to $\tau$, and

(2$'$) $\rho$ is disjoint from $f^{-1}(\eta_{1})\cup f^{-1}(\eta_{2})$
but intersects $f^{-1}(\xi)\cup\tau$.

\noindent Notice that $\tau$ and $f^{-1}(\xi)$ belong to $f^{-1}(L)$.
Since:

(i$'$) $f^{-1}(\xi)$ meets $\tau$ at one point,

(ii$'$) $f^{-1}(\xi),f^{-1}(\eta_{1}),f^{-1}(\eta_{2})$ are homotopic
to $\tau$,

(iv$'$) $f^{-1}(\xi)\cup\tau$ is disjoint from $f^{-1}(\eta_{1})\cup f^{-1}(\eta_{2})$,

\noindent we see that $U(\tau)\coloneqq\{x\in S:x\in\rho$ for some
$\rho\in f^{-1}(L)\}$ is a neighborhood of $\tau$ such that $f(U(\tau))\subset V(f(\tau))$.
This completes the proof.
\end{proof}

Let $D$ denote the union of those elements of $G(p)$ which hit $\partial R$
at two points. Then $f(D)=D$. It follows from Lemma \ref{lem:neighborhood}
that $f|_{D}$ is continuous. By applying the same argument to $f^{-1}$,
we see that $f^{-1}|_{D}$ is also continuous. We conclude that $f|_{D}$
is a self-homeomorphism of $D$.

Though it is not always the case that $D=S$, we observe that the
proof of Proposition \ref{prop:isometry} works for $f|_{D}:D\rightarrow D$.
Hence $f|_{D}$ is an isometry. Since every component of $S-\overline{D}$
is a domain, we can extend $f|_{D}$ to an isometry $h:S\rightarrow S$.

Suppose that $p\in S$. Then there is a family of geodesics $\{\tau_{i}\}_{i\in I}$
whose unique intersection is $p$. Since a geodesic in $S$ is uniquely
determined by its intersection with $D$, we see that $f(\tau_{i})=h(\tau_{i})$
for $i\in I$. Hence $f(p)=h(p)$. It follows that $f:S\rightarrow S$
is an isometry.

Since the proof works for bi-geodesic mapping $f:S_{1}\rightarrow S_{2}$,
we have:
\begin{thm}
\label{thm:same_topology_to_isometry}Let $R_{1},R_{2}$ be two compact
hyperbolic surfaces of genus $g$ with $n\geqslant1$ geodesic boundary
components. Let $S_{1}$ and $S_{2}$ be the interiors of $R_{1}$
and $R_{2}$, respectively. Suppose that $(g,n)\neq(0,3)$ and $f:S_{1}\rightarrow S_{2}$
is a bijection such that $f$ and $f^{-1}$ map each geodesic onto
some geodesic. Then $f$ is an isometry.
\end{thm}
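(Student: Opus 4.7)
The plan is to replay Sections~\ref{sec:A-preliminary-classification}--\ref{sec:Isometry} verbatim for the bi-geodesic map $f:S_{1}\to S_{2}$, using the fact that $S_{1}$ and $S_{2}$ are homeomorphic (since $R_{1}$ and $R_{2}$ share both $g$ and $n\geqslant 1$), and to finish by invoking Proposition~\ref{prop:isometry}.

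First I would observe that the five intrinsic properties introduced in Section~\ref{sec:A-preliminary-classification} are defined inside a single hyperbolic surface of finite type and are preserved by any bi-geodesic map. Applying the five lemmas of that section to $S_{1}$ and $S_{2}$ separately yields Table~\ref{tab:classification_geodesics} on each side; since $f$ preserves the characterizing properties, the sets $G_{c}(S_{i})$, $G(S_{i})$, $G(p)(S_{i})$, $G(\infty)(S_{i})$, $G(p,\infty)(S_{i})$ correspond under $f$, using Lemma~\ref{lem:G_infty} and Proposition~\ref{prop:G_p_infty} (both of whose hypotheses depend only on $(g,n)$).

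Next I would repeat the topological-type construction of Section~\ref{sec:Topological-type}. Fixing orientations $\alpha_{i}^{+}$ on the boundary components of $R_{1}$ and running the same argument produces induced orientations $\alpha_{i}^{+f}$ on the boundary components of $R_{2}$ together with a bijection $f_{*}:G_{c}(S_{1})\cup G_{+\infty}(S_{1})\to G_{c}(S_{2})\cup G_{+\infty}^{f}(S_{2})$. Composing with the realization maps of Lemma~\ref{lem:AC_realization} yields an adjacency-preserving bijection $|AC(S_{1})|\to|AC(S_{2})|$, which extends to a simplicial isomorphism $AC(S_{1})\to AC(S_{2})$. Since Theorem~\ref{thm:KP} is stated only for simplicial automorphisms, I would fix once and for all an auxiliary homeomorphism $\psi:S_{1}\to S_{2}$ (available because $R_{1}$ and $R_{2}$ have the same $(g,n)$), precompose by $\psi_{*}^{-1}$ to obtain a simplicial automorphism of $AC(S_{1})$, realize it as a self-homeomorphism $\tilde h:S_{1}\to S_{1}$, and set $h=\psi\circ\tilde h:S_{1}\to S_{2}$. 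Lemma~\ref{lem:homotopy} then transfers with only notational changes, so that $f(\tau)$ is homotopic to $h(\tau)$ for every $\tau\in G(p)(S_{1})$.

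Finally, the continuity argument of Section~\ref{sec:Isometry} is entirely local and uses only the homotopy data encoded by $h$; Lemma~\ref{lem:neighborhood} and the paragraphs that follow it therefore carry over verbatim to the cross-surface setting. Letting $D\subset S_{1}$ be the union of those elements of $G(p)(S_{1})$ that hit $\partial R_{1}$ at two distinct points, I conclude that $f|_{D}$ is a homeomorphism onto its image in $S_{2}$; Proposition~\ref{prop:isometry} then promotes $f|_{D}$ to an isometry, which extends across the complement of $\overline{D}$ to an isometry $S_{1}\to S_{2}$, and the fact that every geodesic in $S_{i}$ is determined by its intersection with $D$ (respectively $f(D)$) forces $f$ to coincide with this extension everywhere. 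The one genuinely new obstacle is the cross-surface invocation of Theorem~\ref{thm:KP}, and this is handled by the auxiliary homeomorphism $\psi$; every other step is a notational rewrite of work already completed in the self-map case.
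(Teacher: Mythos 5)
Your proposal is correct and follows the paper's own route exactly: the paper proves the self-map case in Sections~\ref{sec:A-preliminary-classification}--\ref{sec:Isometry} and then asserts in one line that the argument works for $f:S_{1}\rightarrow S_{2}$, which is precisely the replay you carry out. Your handling of the one genuinely non-verbatim step --- precomposing the simplicial isomorphism $AC(S_{1})\rightarrow AC(S_{2})$ with $\psi_{*}^{-1}$ for an auxiliary homeomorphism $\psi:S_{1}\rightarrow S_{2}$ so that Theorem~\ref{thm:KP} (stated only for automorphisms) applies --- is exactly the detail the paper leaves implicit, and it is handled correctly.
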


\section{\label{sec:Proof-of-main}Proof of main theorem}
\begin{prop}
\label{prop:same_topology}Let $R_{i}$ be a compact hyperbolic surface
of genus $g_{i}$ with $n_{i}\geqslant1$ geodesic boundary components,
and $S_{i}$ the interior of $R_{i}$, $i=1,2$. Suppose that $f:S_{1}\rightarrow S_{2}$
is a bijection such that $f$ and $f^{-1}$ map each geodesic onto
some geodesic. Then $(g_{1},n_{1})=(g_{2},n_{2})$.
\end{prop}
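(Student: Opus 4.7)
The plan is to isolate two purely geodesic-theoretic invariants of a compact hyperbolic surface with boundary, each preserved by any bi-geodesic bijection, whose values are independent linear functions of $(g,n)$; the topological type is then read off by solving a $2\times 2$ system.

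First I would verify that the invariance arguments from Sections~\ref{sec:A-preliminary-classification} and~\ref{sec:Topological-type} apply in the two-surface setting with essentially no change. The classification in Table~\ref{tab:classification_geodesics} characterizes $G_c$ (the simple closed geodesics) intrinsically via the $f$-invariant properties ``strongly cofills'', ``recurrent'', and ``strongly pre-spiraled'', and characterizes $G$ (the simple associations between boundaries) intrinsically as well. Lemma~\ref{lem:G_infty} then characterizes $G(\infty)$ inside $G$ by a disjoint-family cofilling condition. Each of these characterizations is local to a single surface and uses only properties preserved by any bi-geodesic bijection, so the arguments extend verbatim to $f:S_1\rightarrow S_2$, yielding $f(G_c(S_1))=G_c(S_2)$ and $f(G(\infty)(S_1))=G(\infty)(S_2)$.

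Next I would exploit that $f$ is a bijection on geodesics that preserves disjointness, and therefore sends maximal disjoint families bijectively onto maximal disjoint families. The size of a maximal disjoint family in $G_c(S_i)$ is the size of a system of decomposing curves, and equals $3g_i+n_i-3$ by Proposition~\ref{prop:pants_decomposition}. A maximal disjoint family in $G(\infty)(S_i)$ is the edge set of a geodesic ideal triangulation of $S_i$; by Gauss-Bonnet, $\operatorname{Area}(S_i)=(4g_i+2n_i-4)\pi$ is partitioned into ideal triangles of area $\pi$, giving $4g_i+2n_i-4$ triangles and hence $6g_i+3n_i-6$ edges. Equating the two invariants across $i=1,2$ gives
\[
3g_1+n_1=3g_2+n_2,\qquad 6g_1+3n_1=6g_2+3n_2,
\]
from which $g_1=g_2$ and $n_1=n_2$ follow immediately.

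The main technical point to check is that a maximal disjoint family in $G(\infty)$ truly produces an ideal triangulation of the advertised edge count. This reduces to showing that distinct disjoint elements of $G(\infty)$ represent distinct arc-homotopy classes in the punctured-surface model of $S_i$: Lemma~\ref{lem:spiral_opposite} forces their spiraling orientations at each shared boundary component to agree, and then the uniqueness of the geodesic representative of an arc-homotopy class forbids duplicates. Maximality then forces each complementary region to be an ideal triangle. The edge case $(g_i,n_i)=(0,3)$ poses no difficulty, since the equation $3g+n-3=0$ with $n\geq 1$ has the unique solution $(0,3)$, so matching the first invariant alone already forces $(g_2,n_2)=(0,3)$ whenever $(g_1,n_1)=(0,3)$.
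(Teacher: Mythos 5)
Your proof is correct, and while it shares the paper's overall strategy of extracting two $f$-invariant linear functions of $(g,n)$ and solving for the topological type, your second invariant is genuinely different from the paper's. The first equation, $3g_1+n_1-3=3g_2+n_2-3$, arises in both arguments from the invariance of $G_c$ together with Proposition \ref{prop:pants_decomposition}. For the second equation the paper invokes Proposition \ref{prop:G_p_infty}: the constant $k=6g+3n-7$ in the characterization of $G(p,\infty)$ must agree on both sides, which forces a case split between $n=1$ (where $G(p,\infty)=\emptyset$ already gives $n_2=1$) and $n\geqslant2$. You instead read off $6g+3n-6$ as the cardinality of a maximal disjoint family in $G(\infty)$, i.e.\ the edge count of a geodesic ideal triangulation; this needs only the invariance of $G(\infty)$ from Lemma \ref{lem:G_infty}, bypasses Proposition \ref{prop:G_p_infty} entirely, and treats all $n\geqslant1$ uniformly. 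The price is the verification you correctly flag, and your sketch of it is sound: Lemma \ref{lem:spiral_opposite} forces consistent spiraling orientations within a disjoint family, so the family sits inside some $G_{+\infty}$, where Lemma \ref{lem:AC_realization} identifies disjoint families with simplices of the arc complex and inclusion-maximality forces an ideal triangulation with $6g+3n-6$ edges. Your handling of the pair-of-pants case agrees in substance with the paper's (both reduce to the fact that no geodesic in a pair of pants has the property profile of a simple closed geodesic, equivalently that the first invariant vanishes exactly for $(0,3)$); if you write this up, you should note explicitly that the rows of Table \ref{tab:classification_geodesics} used to characterize $G_c$ remain valid on a pair of pants, since the paper's standing assumption $(g,n)\neq(0,3)$ excludes that case and only the ``associations are not recurrent'' entry actually relies on it.
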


\begin{proof}
Suppose that $(g_{1},n_{1})=(0,3)$, i.e., $R_{1}$ is a pair of pants.
Suppose to the contrary that $(g_{2},n_{2})\neq(0,3)$. Then there
exists a simple closed geodesic in $S_{2}$, i.e., there exists a
geodesic $\tau$ in $S_{2}$ that strongly cofills $S_{2}$ and is
strongly pre-spiraled but not recurrent. Hence $f^{-1}(\tau)$ is
a geodesic in $S_{1}$ that strongly cofills $S_{1}$ and is strongly
pre-spiraled but not recurrent, which is impossible. Thus $(g_{2},n_{2})$
must equal $(0,3)$.

Suppose that $(g_{1},n_{1})\neq(0,3)$. By Table \ref{tab:classification_geodesics},
we see that $\tau$ is a simple closed geodesic if and only if $f(\tau)$
is a simple closed geodesic. Since $f$ is a bijection, Proposition
\ref{prop:pants_decomposition} implies that
\begin{equation}
3g_{1}+n_{1}-3=3g_{2}+n_{2}-3.\label{eq:1}
\end{equation}

If $n_{1}=1$, then Proposition \ref{prop:G_p_infty} gives $n_{2}=1$,
and hence $(g_{1},n_{1})=(g_{2},n_{2})$ by \eqref{eq:1}.

If $n_{1}\geqslant2$, then Proposition \ref{prop:G_p_infty} gives
$n_{2}\geqslant2$ and
\[
6g_{1}+3n_{1}-7=6g_{2}+3n_{2}-7.
\]
Combining this with \eqref{eq:1} we also obtain $(g_{1},n_{1})=(g_{2},n_{2})$.
\end{proof}

Finally, Theorem \ref{Theorem:main} follows from Theorem \ref{thm:LY},
Theorem \ref{thm:same_topology_to_isometry} and Proposition \ref{prop:same_topology}.

\section{Concluding Remarks}

We now see that the bi-geodesic mappings between compact hyperbolic
surfaces with boundary are isometries. The method may be generalized
to apply to other manifolds \emph{with boundary}. In particular, a
paper on punctured hyperbolic surfaces with boundary will be published
elsewhere.

However, it seems that the method does not work for manifolds \emph{without
boundary}. It is still an open problem to determine whether the bi-geodesic
mappings between closed hyperbolic surfaces or punctured hyperbolic
surfaces are isometries.

Furthermore, in contrast with the Fundamental Theorem of Affine Geometry,
it is desirable to determine whether a bi-geodesic mapping is always
a \emph{geodesic mapping}, i.e., a bijection that maps each geodesic
onto some geodesic.

\subsection*{Acknowledgments.}

This work was partially supported by National Natural Science Foundation
of China (Grant No. 11771456) and Fundamental Research Funds for the
Central Universities (Grant No. 531118010617). The author is grateful
to Professor Lixin Liu for many useful suggestions and discussions.

\bibliographystyle{amsplain}
\bibliography{reference}

\end{document}